\journal{Engineering Analysis with Boundary Elements}
\def\diff{\mathrm{d}}
\def\dS{\mathrm{d}S}
\def\vy{\bm{y}}
\def\vx{\bm{x}}
\def\vt{\bm{f}} % tangential field like J and M
\def\vtcoef{f}
\def\vn{\bm{n}}
\def\vk{\bm{k}}
\def\inc{\mathrm{inc}}
\def\E{\bm{E}}
\def\Einc{\bm{E}^\inc}
\def\H{\bm{H}}
\def\Hinc{\bm{H}^\inc}
\def\J{\bm{J}}
\def\M{\bm{M}}
\def\p{\mathrm{p}}
\def\imath{\mathrm{i}}
\def\e{\mathrm{e}}
\def\sdiv{\mathrm{div}_{\rm S}}
\def\Gp{G^{\rm p}}
\def\nd{n_{\rm d}} % number of layers
\def\mb{\overline{m}} % extended numbef of knots for basis
\def\qb{\overline{q}} % extended degree of B-spline for basis
\def\Sb{E} % Bezier element
\def\Np{\bm{N}^\p} % Periodic vector basis function
\def\Mp{\bm{M}^\p} % Periodic vector basis function
\newtheorem{remark}{Remark}
\newtheorem{lemma}{Lemma}
\newtheorem{theorem}{Theorem}
\def\equationautorefname~#1\null{%
  %Eq.~(#1)\null
  (#1)\null
}
\def\algorithmautorefname~#1\null{%
  Algorithm~#1\null
}
\def\sectionautorefname~#1\null{%
  Section~#1\null
}
\def\subsectionautorefname~#1\null{%
  Subsection~#1\null
}
\def\subsubsectionautorefname~#1\null{%
  Subsubsection~#1\null
}
\def\tableautorefname~#1\null{%
  Table~#1\null
}
\def\figureautorefname~#1\null{%
  Figure~#1\null
}
\def\lineautorefname~#1\null{%
  Line~#1\null
}
\def\blue#1{\textcolor{blue}{#1}}
\begin{document}

%%%%%%%%%%%%%%%%%%%%%%%%%%%%%%%%%%%%%%%%%%%%%%%%%%%%%%%%%%
% Front matters
%%%%%%%%%%%%%%%%%%%%%%%%%%%%%%%%%%%%%%%%%%%%%%%%%%%%%%%%%%

\title{An isogeometric boundary element method for three-dimensional doubly-periodic layered structures in electromagnetics}

\begin{frontmatter}

\author[NU]{Toru Takahashi\corref{cor}}
\ead{toru.takahashi@mae.nagoya-u.ac.jp}
\author[NU,OLD]{Tetsuro Hirai}
\cortext[cor]{Corresponding author}
\author[NU]{Hiroshi Isakari}
\author[NU]{Toshiro Matsumoto}

\address[NU]{Department of Mechanical Systems Engineering, Nagoya University, Furo-cho, Nagoya, Aichi, 464-8603 Japan}
\address[OLD]{Former graduate student}

\begin{abstract}
This paper proposes an isogeometric boundary element method (IGBEM) to solve the electromagnetic scattering problems for three-dimensional doubly-periodic multi-layered structures. The main concerns are the constructions of (i)~an open surface (between two layers) and (ii)~a vector basis function with using the B-spline functions. Regarding (i), we considered an algorithm to generate a doubly-periodic open surface with the tensor product of the B-spline functions of any degree. Regarding (ii), we employed the vector basis function based on the B-spline functions, which was proposed by Buffa et al~\cite{buffa2010}, and adapted it to the underlying periodic problems so that it can satisfy the quasi-periodic condition on the boundary of an open surface. The proposed IGBEM worked for solving some numerical examples satisfactorily and proved the applicability to plasmonic simulations.
\end{abstract}

\begin{keyword}
  %% keywords here, in the form: keyword \sep keyword
  %% PACS codes here, in the form: \PACS code \sep code
  %% MSC codes here, in the form: \MSC code \sep code
  %% or \MSC[2008] code \sep code (2000 is the default)
  Boundary Element Method \sep Isogeometric Analysis \sep Electromagnetics \sep Periodic problems
\end{keyword}

\end{frontmatter}

%%%%%%%%%%%%%%%%%%%%%%%%%%%%%%%%%%%%%%%%%%%%%%%%%%%%%%%%%%
% Main body
%%%%%%%%%%%%%%%%%%%%%%%%%%%%%%%%%%%%%%%%%%%%%%%%%%%%%%%%%%

\section*{List of symbols}

\begin{longtable}[c]{lp{.55\textwidth}p{.16\textwidth}}

  Symbol & Explanation & Related item(s)\\
  
  $n_{\rm d}$ & Number of dielectric layers (domains). & \autoref{s:problem}\\
  
  $\widetilde{D}_d$ & $d$-th dielectric layer, where $d=0,\ldots,{\nd-1}$. & \autoref{s:problem}\\
  
  $L_1$, $L_2$ & Periods in the $x_1$ and $x_2$ directions. & \autoref{s:problem}\\
  
  $\varepsilon_d$, $\mu_d$ & Permittivity and magnetic permeability in $\widetilde{D}_d$.  & \autoref{s:problem}\\
  
  $\omega$ & Angular frequency. & \autoref{eq:incident_EH}\\
  
  $t$ & Time. & \autoref{eq:incident_EH}\\
  
  $\imath$ & Imaginary unit, i.e. $\sqrt{-1}$. & \autoref{eq:incident_EH}\\
  
  $k_d$ & Wavenumber in the $\widetilde{D}_d$, where $d=0,\ldots,{\nd-1}$. & \autoref{eq:incident_EH}, \autoref{eq:vkn}\\
  
  $\Einc$, $\Hinc$ & Incident time-harmonic electromagnetic fields given in $\widetilde{D}_0$. & \autoref{eq:incident_EH}\\
  
  $\vk^\inc$ & Incident wavenumber vector. & \autoref{eq:incident}\\

  $\theta$, $\phi$ & Angles of the incident wavenumber vector. & \autoref{eq:incident}\\

  $D$ & Primitive cell. & \autoref{eq:primitive_cell}\\
  
  $S^\p$ & Periodic boundary. & \autoref{eq:Sp}\\
  
  $D_d$ & Dielectric layers (domains) involved in $D$, i.e. $D_d:=\widetilde{D}_d\cap D$, where $d=0,\ldots,{\nd-1}$. & \autoref{s:problem}\\
  
  $S_d$ & Interface between $D_d$ and $D_{d+1}$, i.e. $S_d := \partial D_d \cap \partial D_{d+1}$, where $d=0,\ldots,n_{\rm d}-2$. The subscript $d$ is omitted if no confusion occurs. & Subsections~\ref{s:regularisation}, \ref{s:buffa}, \ref{s:qpv_basis}, \ref{s:integral}.\\
  
  $\E_d$, $\H_d$ & Time-harmonic electromagnetic fields in $D_d$ & \autoref{eq:maxwell1}, \autoref{eq:maxwell2}\\
  
  $\bm{n}_d$ & Unit outward normal vector of the boundary $\partial D_d$. & \autoref{eq:qpbBC1}, \autoref{eq:qpbBC2}\\
  
  $\J_d$, $\M_d$ & Surface electric and magnetic current densities in terms of $\partial D_d$. & \autoref{eq:qpbBC1}, \autoref{eq:qpbBC2}\\
  
  $\beta_1$, $\beta_2$& Phase differences. & \autoref{eq:qpb1}, \autoref{eq:qpb2}\\
  
  $\mathscr{L}^\p_d$, $\mathscr{K}^\p_d$ & Boundary integral operators in terms of $D_d$, where $d=0,\ldots,{\nd-1}$ & \autoref{eq:opL} and \autoref{eq:opK}\\
  
  $\vt_d$ & Surface current density $\J_d$ or $\M_d$ in terms of $D_d$. The subscript $d$ is omitted if no confusion occurs. & \autoref{eq:opL}, \autoref{eq:opK}, \autoref{eq:Pdivconf} etc\\
  
  $\Gp_d$ & Periodic Green's function for the wavenumber $k_d$. & \autoref{eq:Gp}\\
  
  $G_d$ & Fundamental solution for the 3D Helmholtz equation for the wavenumber $k_d$. & \autoref{eq:Gp}\\
  
  $\bm{p}^{(\bm{\nu})}$ & Translation vector in $\Gp_d$, where $\bm{\nu}\in\bbbz^2$. & \autoref{eq:Gp}\\
  
  $\bm{w}$ & Vector weight function. & \autoref{eq:mom}\\
  
  $B_i^p$ & B-spline function of degree $p$, where $i=0,\ldots,n-1$. & \autoref{eq:CoxdeBoor}\\
  
  $t_i$& Knots of B-spline functions, where $i=0,\ldots,n+p$. & \autoref{eq:CoxdeBoor}\\
  
  $\bm{p}_i$ ($\in\bbbr^2$) & Control points for a (periodic) B-spline curve, where $i=0,\ldots,n-1$. & \autoref{eq:Bcurve}\\
  
  $n_h$, $p_h$ & Parameters for a (periodic) B-spline surface for the coordinate $t_h$, where $h=1,2$. & \autoref{eq:pBsurf_x}\\
  
  $t_{h,i}$ & Knots of a (periodic) B-spline surface for the coordinate $t_h$, where $i=0,\ldots,n_h+p_h$. & \autoref{eq:pBsurf_x}\\
  
  $\bm{p}_{i,j}$ ($\in\bbbr^3$) & Control points for a (periodic) B-spline surface, where $i=0,\ldots,n_1-1$ and $j=0,\ldots,n_2-1$.  & \autoref{eq:pBsurf_x}\\
  
  $\Sb_i$ & B\'ezier element. & Second item in Remark~\autoref{theo:pBsurfprop}\\
  
  $\bm{V}_{h,i,j}$ & Buffa's vector basis function~\cite{buffa2010}. & \autoref{eq:compatibleB}\\
  
  $u_{h,i}$ & Knots of a vector basis function for the coordinate $u_h$, where $i=0,\ldots,m_h-1$. & \autoref{eq:compatibleB}\\
  
  $m_h$, $q_h$ & Parameters of a vector basis function for the coordinate $u_h$, where $h=1,2$. & \autoref{eq:compatibleB}\\
  
  $\mb_{2h-1}$, $\qb_{2h}$ & Parameters associated with $m_1$, $m_2$, $q_1$, and $q_2$. & \autoref{eq:mb_qb}\\
  
  $\Mp_{h,i,j}$, $\Np_{h,i,j}$ & Quasi-periodic vector basis functions. The subscript can be written as a single index or omitted if unnecessary. & \autoref{eq:pMdiv}, \autoref{eq:pNdiv}, \autoref{eq:approx_current}\\
  
\end{longtable}

\section{Introduction}\label{s:introduction}

The isogeometric analysis (IGA), which is a class of isoparametric formulation that employs the NURBS (including B-spline) function as shape and basis (approximation) functions in the discretisation process\footnote{This is not exactly the case to 3D electromagnetic problems under consideration. In fact, the shape and basis functions are not exactly the same. However, the term `isogeometric' is widely used nowadays when those functions are related to the NURBS function.}, has drawn academic and industrial attentions from the fields of the finite and boundary element methods (FEM and BEM) since the first paper by Hughes et al.~\cite{hughes2005}. The major advantage of the IGA over the conventional piecewise-polynomial-based (or Lagrange) discretisations is that, once the surface (boundary) of the analysis model is represented with the NURBS surface(s) (by using a CAD software or a surface modeller) in the IGA, the discretisation of the surface can be performed exactly.

The BEM suits to the IGA better than the FEM because the discretisation of the domain surrounded by NURBS surfaces is tricky in the FEM. However, because incorporating a new shape or basis function to the BEM can bring some difficulties, in particular, the evaluation of the (near-)singular integrals, the development of the isogeometric BEM (IGBEM) is laborious relatively to that of the isogeometric FEM. Overcoming such difficulties, the IGA is being gradually applied to the BEM for various types of boundary value problems. A short survey of the IGBEM is found in \cite{takahashi2019}.

Regarding the three-dimensional (3D) electromagnetics, Buffa et al.~\cite{buffa2010} proposed a vector basis function based on the B-spline functions, which can be regarded as a generalisation of the so-called rooftop basis function~\cite{glisson1980}. This enabled to construct the IGBEM (or isogeometric method of moment) for the 3D electromagnetic scattering problems~\cite{buffa2014,evans2013,simpson2018}. In particular, Simpson et al.~\cite{simpson2018} clarified the implementation of the IGBEM for both the electric and magnetic field integral equations (EFIE and MFIE), with considering an acceleration by the $\mathcal{H}$-matrix method. Following these works, D{\"o}lz et al.~\cite{dolz2018} discussed the mathematical details (such as the existence and uniqueness of solution) in the isogeometric discretisation. Further, D{\"o}lz et al.~\cite{dolz2018numerical} compared the accuracy of the IGBEM with that of the (Galerkin) BEM based on the high-order Raviart-Thomas (RT) basis function~\cite{peterson2005}. Recently, Wolf~\cite{wolf2020}, who is the last author of \cite{dolz2018,dolz2018numerical}, described the IGBEM comprehensively from both the mathematical and numerical viewpoints. It should be noted that all these investigations are for non-periodic problems. As of now, any periodic problems have not been studied in the context of the 3D electromagnetic IGBEM as far as we know.

We thus challenged to construct an IGBEM for doubly-periodic boundary value problems (BVPs) in 3D. This is not only from our academic curiosity but also for the potential applications, such as photonic~\cite{joannopoulos2008} and plasmonic crystals~\cite{maier2007}; in particular, we are interested in the analysis and design of ultra-thin photovoltaic devices~\cite{atwater2010,polman2016}. In addition, the application for our approach could include, for example, the analysis of ground penetrating radar~\cite{warren2016} and the assessment of human body exposure to electromagnetic wave~\cite{zradzinski2019}. To handle a variety of applications, we consider a multi-layered structure, where two or more dielectric materials are stacked perpendicularly and each surface (interface) between two materials (layers) are periodic horizontally.

In addition to the aforementioned superiority of the BEM over the FEM, the BEM is more suitable than the FEM as well as other volume-type solvers such as the finite difference (time-domain) method because the top and bottom layers are unbounded in a multi-layered structure, although approximations called absorbing boundary conditions such as perfectly matched layers (PMLs)~\cite{johnson2021notes} help the application of volume-type solvers. As per, there are a number of studies on the boundary element analyses for the 2D singly-periodic multi-layer problems; see Cho et al.~\cite{cho2015} and the references therein.

On the other hand, 3D doubly-periodic multi-layer problems have been rarely studied, except for Barnes~\cite{barnes2003electromagnetic}, Otani et al.~\cite{otani2008}, and Nicholas~\cite{nicholas2008}; their approaches are based on integral equations, but the discretisations are not isogeometric. Barnes~\cite{barnes2003electromagnetic} and Nicholas~\cite{nicholas2008} use the M\"uller integral equations, whereas Otani et al.~\cite{otani2008} use the same PMCHWT formulation as our study but a different basis function from ours, i.e. the standard first-order Rao-Willton-Glisson (RWG) basis function~\cite{rao1982}. The main purpose of \cite{otani2008} is a development of a periodic fast multipole method (pFMM). We emphasise that all these studies \cite{barnes2003electromagnetic,otani2008,nicholas2008} are not involved in the IGA.

In order to develop an IGBEM for such 3D doubly-periodic multi-layered structures, we need to construct (i) a doubly-periodic open surface (in the primary cell), which is rectangular in the parameter space, and (ii) a vector basis function (as well as the weight function) that satisfies the requirement to regularise the variational integral equations. These are addressed with the B-spline function in accordance with the isogeometric concept.

This paper builds on several original works. We clarify our contribution in terms of the above requisites of (i) and (ii). Regarding (i), i.e. modelling an open surface with the B-spline function, we extend the algorithm by Shimba et al.~\cite{shimba2015}, which can generate an open curve with the B-spline function of degree 2 so that the curve can represent the unit of a periodic (and infinitely long) curve on a plane. We modify the Shimba's algorithm to handle arbitrary degree and apply the modified algorithm to generating an open surface in 3D through the tensor product. We will term such an open surface a periodic B-spline surface in \autoref{s:surf_pBsurface}.

Regarding (ii), i.e. the construction of a vector basis function, we essentially exploited the vector basis function proposed by Buffa et al.~\cite{buffa2010}. Following the notations in \cite{simpson2018} basically, we modified the vector basis function (as well as the weight function) so that it can satisfy the quasi-periodic condition, which is required to regularise the variational integral equations, on a periodic B-spline surface. Taking account of the quasi-periodicity into the Buffa's vector basis function is similar to that into the RWG basis function~\cite{rao1982}, which was mentioned by Otani et al.~\cite{otani2008} and well examined by Hu et al.~\cite{hu2011}. In addition, in the case of the 2D Helmholtz equation, Shimba et al.~\cite{shimba2015} incorporated the quasi-periodic condition into the B-spline basis function in a similar way.

The remaining part of this paper is organised as follows: \autoref{s:formulation} formulates the periodic problem to be solved and presents the corresponding boundary integral equations. Moreover, the requirements for basis and testing functions are mentioned. \autoref{s:surface} shows the way to model each open surface (interface) with a B-spline surface, with considering the periodicity. \autoref{s:basis} proposes two types of vector basis functions satisfying the quasi-periodic condition. In \autoref{s:igbem}, we establish an IGBEM with mentioning the evaluation of double-surface integrals. In \autoref{s:num} as well as \autoref{s:plasmonics}, we solve some numerical problems by our IGBEM in order to validate its accuracy and applicability to plasmonic simulations.

%%%%%%%%%%%%%%%%%%%%%%%%%%%%%%%%%%%%%%%%%%%%%%%%%%%%%%%%%%
\section{Formulation}\label{s:formulation}
%%%%%%%%%%%%%%%%%%%%%%%%%%%%%%%%%%%%%%%%%%%%%%%%%%%%%%%%%%

%%%%%%%%%%%%%%%%%%%%%%%%%%%%%%%%%%%%%%%%%%%%%%%%%%%%%%%%%%
\subsection{Problem statement}\label{s:problem}
%%%%%%%%%%%%%%%%%%%%%%%%%%%%%%%%%%%%%%%%%%%%%%%%%%%%%%%%%%

Let us consider a set of $\nd$ dielectric layers (domains) $\widetilde{D}_0,\ldots,\widetilde{D}_{\nd-1}$ stacked along the $x_3$ direction, where $\widetilde{D}_0$ and $\widetilde{D}_{\nd-1}$ denote the top and bottom layers, respectively (\autoref{fig:periodicproblem}). Suppose that $\widetilde{D}_d$ is doubly-periodic with the period of $L_1$ and $L_2$ in the $x_1$ and $x_2$ directions, respectively. Here, the permittivity and magnetic permeability of $\widetilde{D}_d$ are denoted by $\varepsilon_d$ and $\mu_d$, respectively. We suppose that $\varepsilon_d$ and $\mu_d$ are real otherwise stated.

We consider the time-harmonic electromagnetic fields when the following incident electromagnetic wave of angular frequency $\omega$ is given to the top layer $\widetilde{D}_0$:
\begin{eqnarray}
  \Einc(\vx,t) = \bm{a}^\inc\mathrm{e}^{\imath\vk^\inc\cdot\vx }\mathrm{e}^{-\imath\omega t},\qquad \Hinc(\vx,t) = \bm{b}^\inc\mathrm{e}^{\imath\vk^\inc\cdot\vx}\mathrm{e}^{-\imath\omega t}\quad (\vx\in\widetilde{D}_0),
  \label{eq:incident_EH}
\end{eqnarray}
where $\imath$ denote the imaginary unit, i.e. $\imath:=\sqrt{-1}$. Here, the wavenumber vector $\vk^\inc$ is defined as
\begin{eqnarray}
  \vk^\inc := k_0 (\cos\phi\sin\theta, \sin\phi\sin\theta, -\cos\theta)^{\rm T},
  \label{eq:incident}
\end{eqnarray}
where $k_0:=\omega\sqrt{\varepsilon_0\mu_0}\equiv|\vk^\inc|$ denotes the wavenumber in $\widetilde{D}_0$, and has the relationship $\omega\mu_0\bm{b}^\inc=\vk^\inc\times\bm{a}^\inc$. Also, the angles $\theta$ and $\phi$ denote the incident angles from the $-x_3$- and $x_1$-axis, respectively (see Figure~\ref{fig:angles}). In what follows, we will omit the time factor $\mathrm{e}^{-\imath\omega t}$ for the sake of simplicity.

\begin{figure}[H]
  \centering
  \includegraphics[width=.9\textwidth]{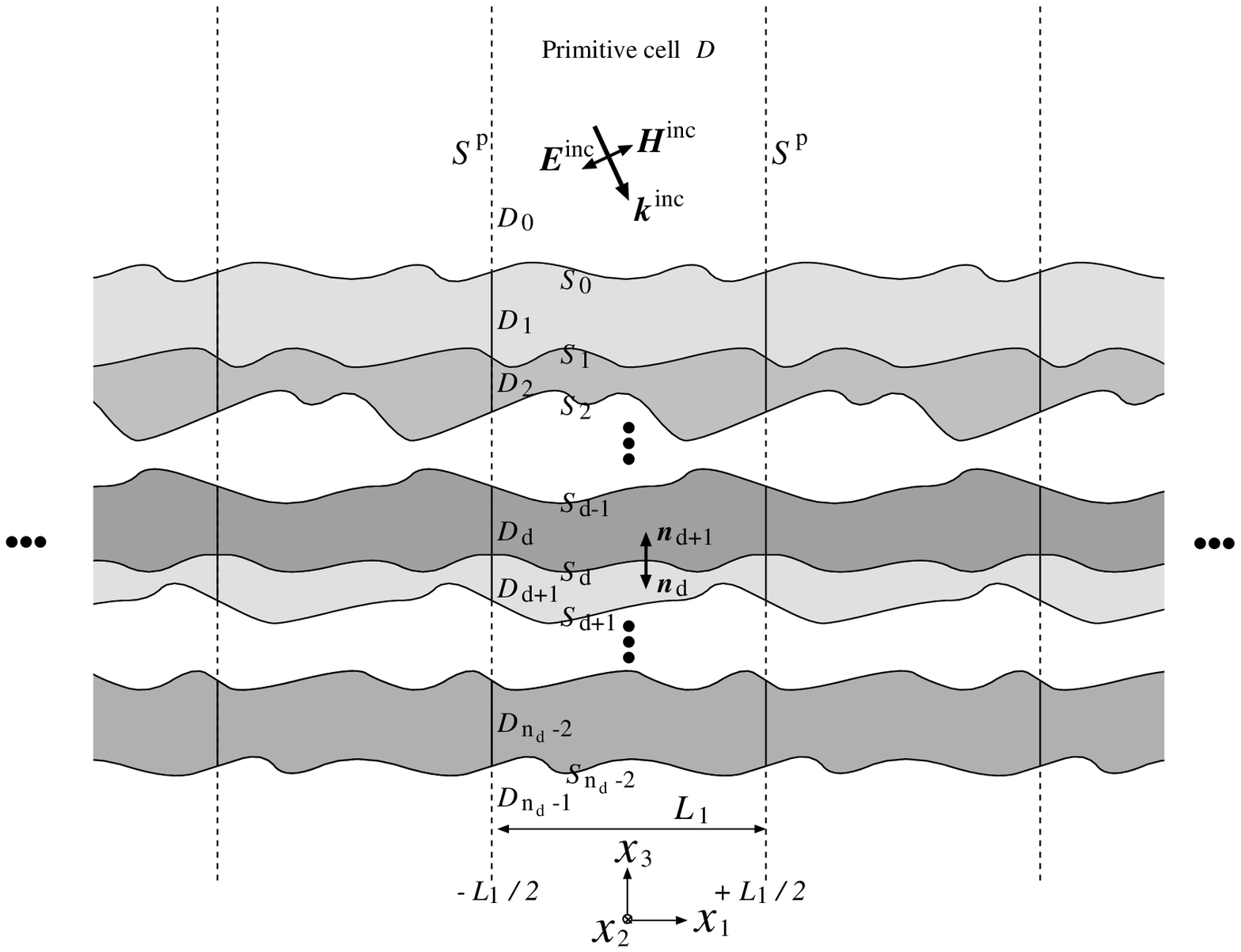}\\[10pt]
  \includegraphics[width=.45\textwidth]{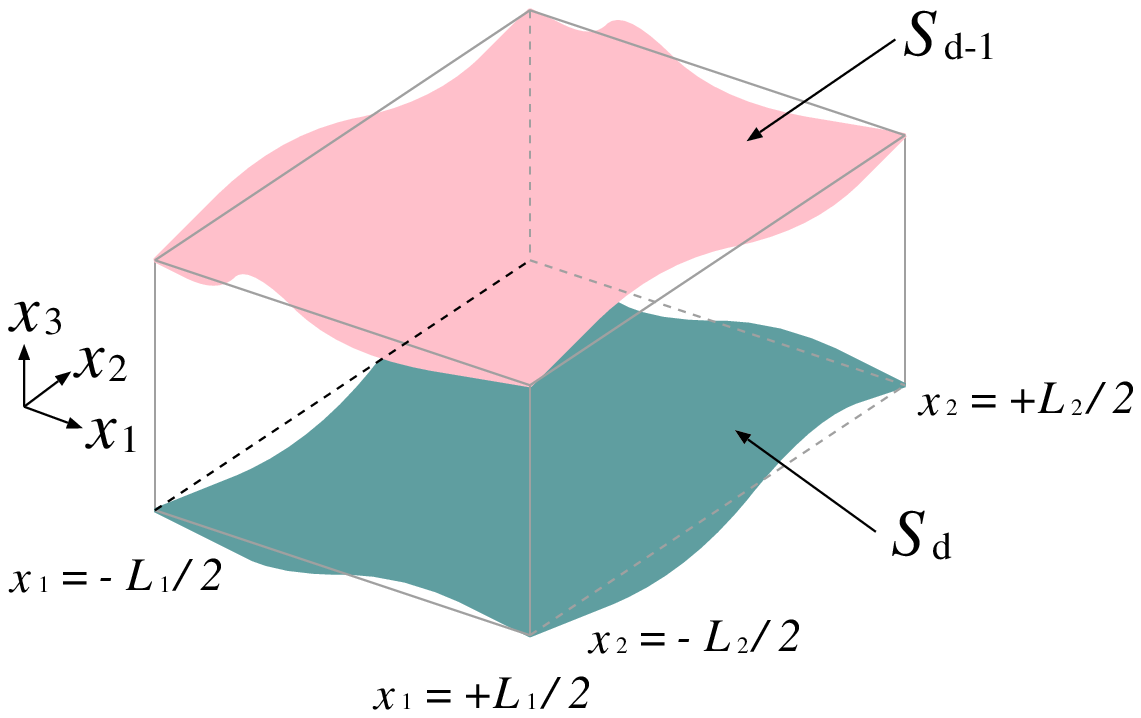}\\
  \caption{Scattering problem in the 3D doubly-periodic multi-layered structure. The lower figure is the bird-view of the domain $D_d$, which is the intersection of the (infinite) domain $\widetilde{D_d}$ and the primitive cell $D$ in (\ref{eq:primitive_cell}).}
  \label{fig:periodicproblem}
\end{figure}

\begin{figure}[H]
  \centering
  \includegraphics[width=.3\textwidth]{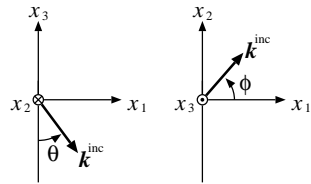}
  \caption{Definition of the angles $\theta$ and $\phi$.}
  \label{fig:angles}
\end{figure}

From the periodicity, we may consider only the following primitive cell $D$:
\begin{eqnarray}
  D := \left[-L_1/2,L_1/2\right]\otimes\left[-L_2/2,L_2/2\right]\otimes\left[-\infty, \infty\right].
  \label{eq:primitive_cell}
\end{eqnarray}
The four side boundaries of $D$ are defined as the periodic boundary $S^\p$, i.e.
\begin{eqnarray}
  S^\p := \left\{\vx \mid \text{$\vx\in\partial D$, $|x_1|=L_1/2$ or $|x_2|=L_2/2$} \right\}.
  \label{eq:Sp}
\end{eqnarray}
Then, the part of $\widetilde{D}_d$ in $D$ is denoted by $D_d$, i.e. $D_d:=\widetilde{D}_d\cap D$, and  the interface (boundary) between $D_d$ and $D_{d+1}$ is denoted by $S_d$, i.e. $S_d := \partial D_d \cap \partial D_{d+1}$. We let $\vn_d$ be the unit outward normal vector of $\partial D_d$.

Then, we may solve the following periodic boundary value problem in the primitive cell $D$:\\
\begin{subequations}
  Maxwell equations in $D_d$:
  \begin{align}
    &\nabla\times\E_d(\vx) = \imath\omega\mu_d\H_d(\vx),\label{eq:maxwell1}\\
    &\nabla\times\H_d(\vx) = -\imath\omega\varepsilon_d\E_d(\vx),\label{eq:maxwell2}\\
    &\text{($\E_d$, $\H_d$: electric and magnetic fields in terms of $D_d$)}\nonumber
  \end{align}
  Boundary conditions on $S_d$ ($=\partial D_d\cap\partial D_{d+1}$):
  \begin{align}
    & \J_d(\vx)=-\J_{d+1}(\vx),\label{eq:qpbBC1}\\
    & \M_d(\vx)=-\M_{d+1}(\vx),\label{eq:qpbBC2}\\
    &\text{($\J_d:=\vn_d\times\H_d$, $\M_d:=\E_d\times\vn_d$: surface electric and magnetic current densities in terms of $\partial D_d$)}\nonumber
  \end{align}
  Quasi-periodic conditions on $\partial D_d\cap S^\p$:
  \begin{align}
    &\E_d({L_1}/{2}, x_2, x_3) = \mathrm{e}^{\imath\beta_1}\E_d(-{L_1}/{2}, x_2, x_3),\quad\H_d({L_1}/{2}, x_2, x_3) = \mathrm{e}^{\imath\beta_1}\H_d(-{L_1}/{2}, x_2, x_3),\label{eq:qpb1}\\
    &\E_d(x_1, {L_2}/{2}, x_3) = \mathrm{e}^{\imath\beta_2}\E_d(x_1, -{L_2}/{2}, x_3),\quad\H_d(x_1, {L_2}/{2}, x_3) = \mathrm{e}^{\imath\beta_2}\H_d(x_1, -{L_2}/{2}, x_3),\label{eq:qpb2}\\
    &\text{($\beta_1$, $\beta_2$ : phase differences defined as $L_1 k^\inc_1$ and $L_2 k^\inc_2$)}\nonumber
  \end{align}
  Radiation conditions for $|x_3|\rightarrow\infty$:
  \begin{align}
    &\E_0(\vx)-\Einc(\vx)=\sum_{\bm{\nu}\in\mathbb{Z}^2}\bm{a}^+_{\bm{\nu}}\e^{\imath\vk_{\bm{\nu}}^+\cdot\vx},\quad\H_0(\vx)-\Hinc(\vx)=\sum_{\bm{\nu}\in\mathbb{Z}^2}\bm{b}^+_{\bm{\nu}}\e^{\imath\vk_{\bm{\nu}}^+\cdot\vx}\quad (x_3 > x_3^{\rm max}),\label{eq:rad1}\\
    &\E_{n_d-1}(\vx)=\sum_{\bm{\nu}\in\mathbb{Z}^2}\bm{a}^-_{\bm{\nu}}\e^{\imath\vk_{\bm{\nu}}^-\cdot\vx},\quad \H_{n_d-1}(\vx)=\sum_{\bm{\nu}\in\mathbb{Z}^2}\bm{b}^-_{\bm{\nu}}\e^{\imath\vk_{\bm{\nu}}^-\cdot\vx}\quad (x_3 < x_3^{\rm min}).\label{eq:rad2}
  \end{align}%
  \label{eq:QPBVP}%
\end{subequations}%
Here, in \autoref{eq:rad1} (respectively, \autoref{eq:rad2}), $x_3^\mathrm{max}$ (respectively, $x_3^\mathrm{min}$) denotes the maximum (respectively, minimum) value of the coordinate $x_3$ in the top interface $S_0$ (respectively, the bottom interface $S_{n_d-2}$)~\cite{arens2010, greengard2014}. Also, $\vk^\pm_{\bm{\nu}}$ is defined by
\begin{eqnarray}
  \vk_{\bm{\nu}}^\pm :=
  \left(
  \begin{array}{c}
    (\beta_1+2\nu_1\pi)/L_1\\
    (\beta_2+2\nu_2\pi)/L_2\\
    \pm\sqrt{k_d^2 - (\beta_1+2\nu_1\pi)^2/L_1^2 - (\beta_2+2\nu_2\pi)^2/L_2^2}
  \end{array}
  \right),
  \label{eq:vkn}
\end{eqnarray}
where $k_d:=\omega\sqrt{\varepsilon_d\mu_d}$ stands for the wavenumber in $D_d$. Also, the vectors $\bm{a}^\pm_{\bm{nu}}$ and $\bm{b}^\pm_{\bm{nu}}$ represent the coefficients of the far-fields.

In this study, we suppose that the third component of $\vk_{\bm{\nu}}^\pm$ in \autoref{eq:vkn} is not zero, which corresponds to the Rayleigh's anomaly and thus prohibits us from calculating the periodic Green's function mentioned below.

%%%%%%%%%%%%%%%%%%%%%%%%%%%%%%%%%%%%%%%%%%%%%%%%%%%%%%%%%%
\subsection{Boundary integral equations}\label{s:bie}
%%%%%%%%%%%%%%%%%%%%%%%%%%%%%%%%%%%%%%%%%%%%%%%%%%%%%%%%%%
We solve the periodic boundary value problem \autoref{eq:QPBVP} with the PMCHWT-type boundary integral equations (BIEs)~\cite{poggio1973}, i.e.
\begin{subequations}
  \begin{eqnarray}
     &&\sum_{d=i}^{i+1}\left[\imath\omega\mu_d(\mathscr{L}^\p_d\J_d)(\vx) - (\mathscr{K}^\p_d\M_d)(\vx)\right]_{\rm tan} = \left[\Einc(\vx)\right]_{\textrm{tan},i}\quad(\vx\in S_i),\\
     &&\sum_{d=i}^{i+1}\left[\imath\omega\varepsilon_d(\mathscr{L}^\p_d\M_d)(\vx) + (\mathscr{K}^\p_d\J_d)(\vx)\right]_{\rm tan} = \left[\Hinc(\vx)\right]_{\textrm{tan},i}\quad(\vx\in S_i),
  \end{eqnarray}
  \label{eq:PMCHWT_prd}%
\end{subequations}
for $i=0,\ldots,\nd-2$.\footnote{Eqs.~\autoref{eq:PMCHWT_prd} have not considered the boundary conditions in \autoref{eq:qpbBC1} and \autoref{eq:qpbBC2} yet. In practice, we eliminate either $(\J_d,\M_d)$ or $(\J_{d+1},\M_{d+1})$ on the surface $S_d$ ($=\partial D_d\cap\partial D_{d+1}$) by the boundary conditions and then solve \autoref{eq:PMCHWT_prd} for the remaining variables.} Here, $[\bm{v}]_{\rm tan}$ denotes the tangential component of a vector field $\bm{v}$. Moreover, $[\{\E,\H\}^\inc]_{\textrm{tan},i}$ represents $[\{\E,\H\}^\inc]_{\textrm{tan}}$ if $i=0$ and vanishes otherwise. Also, the following integral operators are defined:
\begin{eqnarray}
  &&(\mathscr{L}^\p_d\vt_d)(\vx) := \int_{\partial D_d \setminus S^\p}\left(1+\frac{1}{k_d^2}\nabla_x\nabla_x\cdot\right)G_d^\p(\vx-\vy) \vt_d(\vy) \dS_y,\label{eq:opL}\\
  &&(\mathscr{K}^\p_d\vt_d)(\vx) := \int_{\partial D_d \setminus S^\p}\vt_d(\vy)\times\nabla_y G_d^\p(\vx-\vy) \dS_y.\label{eq:opK}
\end{eqnarray}
Here, with denoting the fundamental solution for the 3D Helmholtz equation by $G_d(\vx):=\frac{\mathrm{e}^{\imath k_d |\vx|}}{4\pi|\vx|}$, $\Gp_d$ represents the periodic Green's function, by which $\E_d$ and $\H_d$ can satisfy the quasi-periodic conditions and radiation conditions in \autoref{eq:QPBVP}, and has the following formal expression:
\begin{eqnarray}
  \Gp_d(\vx-\vy)
  = \sum_{\bm{\nu}:=(\nu^1, \nu^2)\in\mathbb{Z}^2}\mathrm{e}^{\imath\vk^\inc\cdot\bm{p}^{(\bm{\nu})}} G_d(\vx - (\vy + \bm{p}^{(\bm{\nu})}))
  = \sum_{\bm{\nu}\in\mathbb{Z}^2}\mathrm{e}^{\imath\vk^\inc\cdot\bm{p}^{(\bm{\nu})}} \frac{\mathrm{e}^{\imath k_d |\vx - (\vy + \bm{p}^{(\bm{\nu})})|}}{4\pi|\vx - (\vy + \bm{p}^{(\bm{\nu})})|},
  \label{eq:Gp}
\end{eqnarray}
where $\bm{p}^{(\bm{\nu})}$ denotes the translation vector
\begin{eqnarray*}
  \bm{p}^{(\bm{\nu})} := (L_1\nu_1,\ L_2\nu_2,\ 0)^\mathrm{T}.
\end{eqnarray*}

Since the wavenumber $k_d$ is real by assumption, the infinite series in \autoref{eq:Gp} converges very slowly or often does converge. Therefore, we compute $G_d^\p$ with the Ewald's method~\cite{arens2010}. The details are described in \ref{s:ewald}.

We solve the BIEs in \autoref{eq:PMCHWT_prd} by the Galerkin method. Denoting the weight (testing) vector function as $\bm{w}$, we can obtain the following variational integral equations:
\begin{subequations}
  \begin{eqnarray}
    &&\Bigl\langle \bm{w}(\vx), \sum_{d=i}^{i+1}\left[\imath\omega\mu_d(\mathscr{L}^\p_d\J_d)(\vx) - (\mathscr{K}^\p_d\M_d)(\vx)\right]_{\rm tan} \Bigr\rangle = \Bigl\langle \bm{w}(\vx),\left[\Einc(\vx)\right]_{\textrm{tan},i}\Bigr\rangle,\\
    &&\Bigl\langle \bm{w}(\vx), \sum_{d=i}^{i+1}\left[\imath\omega\varepsilon_d(\mathscr{L}^\p_d\M_d)(\vx) + (\mathscr{K}^\p_d\J_d)(\vx)\right]_{\rm tan}\Bigr\rangle = \Bigl\langle \bm{w}(\vx), \left[\Hinc(\vx)\right]_{\textrm{tan},i}\Bigr\rangle,
  \end{eqnarray}
  \label{eq:mom}%
\end{subequations}
where we let $\langle \bm{a},\bm{b} \rangle:=\int\bm{a}(\vx)\cdot\bm{b}(\vx)\dS_x$.

%%%%%%%%%%%%%%%%%%%%%%%%%%%%%%%%%%%%%%%%%%%%%%%%%%%%%%%%%%
\subsection{Regularisation} \label{s:regularisation} % aka regularization.tex
%%%%%%%%%%%%%%%%%%%%%%%%%%%%%%%%%%%%%%%%%%%%%%%%%%%%%%%%%%

As in the case of the conventional (triangular) RWG~\cite{rao1982} and (square) rooftop basis functions, it is useful to regularise the integral operator $\mathscr{L}^\p_d$ in the BIEs \autoref{eq:mom} in order to reduce its singularity owing to two differential operators, i.e. $\nabla_x\nabla_x$. The regularisation can be performed by moving one differentiation to a surface current density $\vt_d$ (${}=\J_d$ or $\M_d$) and the other to a weight function $\bm{w}$ by using integration by parts. As a result, we will see that the surface current densities and weight function need to satisfy certain quasi-periodic conditions. These conditions imply that we need to use appropriate basis (approximation) and weight functions when we discretise the variational integral equations \autoref{eq:mom}. The discretisation will be investigated in \autoref{s:basis}.

We regularise the second term in the integral operator $\mathscr{L}^\p_d$ in \autoref{eq:opL} in terms of a surface current density $\vt_d$. We now consider an interface $S_d$, which is the lower boundary of $D_d$, and regularise the following integral over $S_d$:
\begin{eqnarray}
  \bm{I}(\vx):=\nabla_x\nabla_x\cdot\int_S \Gp(\vx-\vy) \vt(\vy) \dS_y,
  \label{eq:target_integral}
\end{eqnarray}
where we omit the domain index $d$ from the symbols $S_d$, $\Gp_d$, and $\vt_d$ for simplicity.

%%%%%%%%%%%%%%%%%%%%%%%%%%%%%%%%%%%%%%%%%%%%%%%%%%%%%%%%%%
\subsubsection{Requirement for vector basis function}\label{s:req_basis}
%%%%%%%%%%%%%%%%%%%%%%%%%%%%%%%%%%%%%%%%%%%%%%%%%%%%%%%%%%

From the fact that $\vt$ is tangential to $S$ and the Gauss's divergence theorem~\cite{kirsch2014}, \autoref{eq:target_integral} except for $\nabla_x$ can be rewritten as follows:
\begin{eqnarray*}
  \nabla_x\cdot\int_S \Gp(\vx-\vy) \vt(\vy) \dS_y
  &=&\int_S \nabla_x\Gp(\vx-\vy)\cdot\vt(\vy)\ \dS_y\notag\\
  &=&-\int_S \nabla_y\Gp(\vx-\vy)\cdot\vt(\vy)\ \dS_y\notag\\
  &=&\int_S \Gp(\vx-\vy)\sdiv\vt(\vy) \dS_y - \oint_{\partial S} \Gp(\vx-\vy)\vt(\vy)\cdot\bm{\tau}(\vy) \diff\ell_y,
  %\label{eq:target_integral}
\end{eqnarray*}
where the unit vector $\bm{\tau}$ is tangential to $S$ and normal to $\partial S$. Further, $\sdiv$ denotes the surface divergence~\cite{kirsch2014}. Since $\partial S$ consists of the four integral paths, i.e. $C_1$, $C_2$, $C_3$, and $C_4$ (see \autoref{fig:delS_C}, again), we can rewrite the above path integral as follows:
\begin{eqnarray*}
  \oint_{\partial S} \Gp(\vx-\vy)\vt(\vy)\cdot\bm{\tau}(\vy) \diff\ell_y
   &=& \int_{C_1\cup C_3} \Gp(\vx-\vy)\vt(\vy)\cdot\bm{\tau}(\vy) \diff\ell_y + \int_{C_2\cup C_4} \Gp(\vx-\vy)\vt(\vy)\cdot\bm{\tau}(\vy) \diff\ell_y\nonumber\\
  &=& \int_{C_1} \Gp(\vx-\vy)\left(\vt(\vy)\cdot\bm{\tau}(\vy) - \mathrm{e}^{-\imath\beta_1}\vt(\vy+L_1\bm{e}_1)\cdot\bm{\tau}(\vy)\right) \diff\ell_y\notag\\
  &&+ \int_{C_2} \Gp(\vx-\vy)\left(\vt(\vy)\cdot\bm{\tau}(\vy) - \mathrm{e}^{-\imath\beta_2}\vt(\vy+L_2\bm{e}_2)\cdot\bm{\tau}(\vy)\right) \diff\ell_y,\nonumber
\end{eqnarray*}
where we used the identities $\Gp(\vx-(\vy+L_1\bm{e}_1))=\Gp(\vx-(\vy+\bm{p}^{(1,0)}))=\mathrm{e}^{\vk^\inc\cdot(-\bm{p}^{(1,0)})}\Gp(\vx-\vy)=\mathrm{e}^{-i\beta_1}\Gp(\vx-\vy)$ for any $\vx$ and $\vy$ and $\bm{\tau}_1(\vy+L_1\bm{e})=-\bm{\tau}(\vy)$ for any $\vy$ on $C_1$ in the first term of the most RHS; we used similar identities in the second term. Therefore, in order to eliminate the path integral, we need the following conditions:
\begin{subequations}
  \begin{eqnarray}
    &&\vt(\vx + L_1\bm{e}_1)\cdot\bm{\tau}(\vx) = \mathrm{e}^{\imath\beta_1}\vt(\vx)\cdot\bm{\tau}(\vx)\quad(\vx\in C_1),\label{eq:Pdivconf1}\\
    &&\vt(\vx + L_2\bm{e}_2)\cdot\bm{\tau}(\vx) = \mathrm{e}^{\imath\beta_2}\vt(\vx)\cdot\bm{\tau}(\vx)\quad(\vx\in C_2).\label{eq:Pdivconf2}   
  \end{eqnarray}
  \label{eq:Pdivconf}%
\end{subequations}
These equations represent quasi-periodic conditions for the normal component of the surface current density $\vt$ on the boundary $\partial S$ or the periodic boundary $S^\p$. In \autoref{s:basis}, we approximate $\vt$ with the basis function so that \autoref{eq:Pdivconf} are satisfied.

\begin{figure}[H]
  \centering
  \includegraphics[width=.4\textwidth]{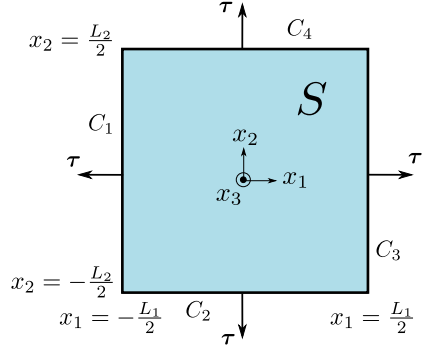}
  \caption{Interface $S$, whose boundary $\partial S$ consists of the four paths $C_1$, $C_2$, $C_3$, and $C_4$. The vector $\bm{\tau}$ denotes the $S$'s tangential vector that is normal to the $\partial S$. The interface $S$ is plane in this figure but not plane in general.}
  \label{fig:delS_C}
\end{figure}

%%%%%%%%%%%%%%%%%%%%%%%%%%%%%%%%%%%%%%%%%%%%%%%%%%%%%%%%%%
\subsubsection{Requirement for weight function}\label{s:req_weight}
%%%%%%%%%%%%%%%%%%%%%%%%%%%%%%%%%%%%%%%%%%%%%%%%%%%%%%%%%%

Let us consider a vector weight function $\bm{w}$ that is tangential to $S$. Then, the testing of the underlying vector $\bm{I}$ in \autoref{eq:target_integral}, where the path integral is removed by assuming \autoref{eq:Pdivconf}, can be expressed as follows:
\begin{eqnarray*}
  &&\int_S\bm{w}(\vx)\cdot\bm{I}(\vx) \dS_x 
    = \int_S\bm{w}(\vx)\cdot\left(\nabla_x\int_S\Gp(\vx-\vy)\sdiv\vt(\vy) \dS_y\right) \dS_x\nonumber\\
  &&= \oint_S \bm{w}(\vx)\cdot\bm{\tau}(\vx) \left(\int_S \Gp(\vx-\vy)\sdiv\vt(\vy) \dS_y\right) \diff\ell_x
    - \int_S \sdiv\bm{w}(\vx) \int_S\Gp(\vx-\vy)\sdiv\vt(\vy) \dS_y \dS_x.
\end{eqnarray*}
Similarly to the previous subsection, the path integral vanishes if the following conditions are met:
\begin{subequations}
  \begin{eqnarray}
    &\bm{w}(\vx + L_1\bm{e}_1)\cdot\bm{\tau}(\vx) = \mathrm{e}^{-\imath\beta_1}\bm{w}(\vx)\cdot\bm{\tau}(\vx)\quad(\bm{x}\in C_1),\\
    &\bm{w}(\vx + L_2\bm{e}_2)\cdot\bm{\tau}(\vx) = \mathrm{e}^{-\imath\beta_2}\bm{w}(\vx)\cdot\bm{\tau}(\vx)\quad(\bm{x}\in C_2).   
  \end{eqnarray}
  \label{eq:Pdivconf_w}%
\end{subequations}
Namely, as far as we choose a weight function that satisfies the quasi-periodic condition regarding the inverse phase difference (i.e. $e^{-\imath\beta_1}$ and $e^{-\imath\beta_2}$), we can move the underlying differential operator $\nabla_x$ to the weight function. 

The requirements in (\ref{eq:Pdivconf}) and (\ref{eq:Pdivconf_w}) were already mentioned by Otani et al.~\cite[Section 2.2.3]{otani2008} and rigorously studied by Hu et al.~\cite{hu2011} in terms of the RWG basis.

%%%%%%%%%%%%%%%%%%%%%%%%%%%%%%%%%%%%%%%%%%%%%%%%%%%%%%%%%%%%
\section{Doubly-periodic surface}\label{s:surface}
%%%%%%%%%%%%%%%%%%%%%%%%%%%%%%%%%%%%%%%%%%%%%%%%%%%%%%%%%%%%

We desire to express each doubly-periodic interface (open surface) with the B-spline function. To this end, after defining the B-spline function and curve in \autoref{s:surf_def}, we first show an algorithm to build a periodic curve with the B-spline curve (\autoref{s:surf_pBcurve}). Successively, we construct an open surface as the tensor product of two periodic curves (\autoref{s:surf_pBsurface}).

%%%%%%%%%%%%%%%%%%%%%%%%%%%%%%%%%%%%%%%%%%%%%%%%%%%%%%%%%%%%
\subsection{Definitions}\label{s:surf_def}
%%%%%%%%%%%%%%%%%%%%%%%%%%%%%%%%%%%%%%%%%%%%%%%%%%%%%%%%%%%%

Let $B_0^p,\ldots,B_{n-1}^p$ be $n$ ($\ge 1$) B-spline functions of degree $p$ ($\ge 0$) and $T:=\{t_0,\ldots,t_{n+p}\}$ be the knot vector, where the knots $t_0$, $\ldots$, $t_{n+p}$ satisfy $t_0\le t_1\le \ldots \le t_{n+p}$ in general. In this study, we compute the B-spline function according to the following Cox-de Boor recursion formula~\cite{piegl2012}:
\begin{subequations}
  \begin{eqnarray}
    &&B_i^0(t) =
       \begin{cases}
         1 & (t_i \leq t < t_{i+1})\\
         0 & (\text{otherwise})
       \end{cases},\\
    &&B_i^p(t) = \frac{t - t_i}{t_{i+p} - t_i}B_i^{p-1}(t) + \frac{t_{i+p+1} - t}{t_{i+p+1}-t_{i+1}}B_{i+1}^{p-1}(t)\quad(p\ge 1).
  \end{eqnarray}
  \label{eq:CoxdeBoor}%
\end{subequations}
It should be noted that the support of $B_i^p$ is $[t_i,t_{i+p+1}]$.

Here, the partition of unity holds in the domain $[t_p,t_n]$, i.e.
\begin{eqnarray}
  \sum_{i=0}^{n-1} B_i^p(t)\equiv 1 \quad(t\in[t_p,t_n]).
  \label{eq:pou}
\end{eqnarray}
We assume that the domain $[t_p,t_n]$ is non-vanishing, that is,
\begin{eqnarray}
  n > p.
  \label{eq:n>p}
\end{eqnarray}
Then, we define a B-spline curve $C\subset \bbbr^2$ as a set of the points $\bm{x}\in\bbbr^2$ such as
\begin{eqnarray}
  \bm{x}(t)=\begin{pmatrix}x(t)\\y(t)\end{pmatrix}:=\sum_{i=0}^{n-1} B_i^p(t)\bm{p}_i\quad(t\in[t_p,t_n]),
  \label{eq:Bcurve}
\end{eqnarray}
where $\bm{p}_i\equiv(x_i,y_i)^{\rm T}\in\bbbr^2$ denotes the given $i$-th control point.

%%%%%%%%%%%%%%%%%%%%%%%%%%%%%%%%%%%%%%%%%%%%%%%%%%%%%%%%%%%%
\subsection{Periodic B-spline curve}\label{s:surf_pBcurve}
%%%%%%%%%%%%%%%%%%%%%%%%%%%%%%%%%%%%%%%%%%%%%%%%%%%%%%%%%%%%

Let us consider a non-self-intersecting B-spline curve (on the $xy$-plane) that connects a point on the line $x=-\frac{L}{2}$ with another on $x=\frac{L}{2}$, where $L$ will denote the period in the $x$ direction. In particular, we request that the knots $t_p$ and $t_n$ (i.e. the ends of the parametric coordinate $t$) correspond to $x=-\frac{L}{2}$ and $\frac{L}{2}$ (i.e. those of the physical coordinate $x$), respectively, that is,
\begin{eqnarray}
  x(t_p)=-\frac{L}{2},\quad x(t_n)=\frac{L}{2}.
  \label{eq:pB1}
\end{eqnarray}
Moreover, in order to let the curve be periodic (of period $L$) and continuously differentiable, we suppose 
\begin{eqnarray}
  \frac{\diff^k y}{\diff t^k}(t_p)=\frac{\diff^k y}{\diff t^k}(t_n)\quad(k=0,\ldots,p-1).
  \label{eq:pB2}
\end{eqnarray}

\autoref{algo:pBcurve} can construct a B-spline curve that satisfies both \autoref{eq:pB1} and \autoref{eq:pB2}. The mathematical justification of this algorithm is described in \ref{s:proof_surface}, in particular, Theorem~\ref{theo:pBcurve}. We will term such a B-spline curve a \textit{periodic B-spline curve} hereafter.

\begin{algorithm}[H]
  \caption{Construction of a periodic B-spline curve, which satisfies \autoref{eq:pB1} and \autoref{eq:pB2}.}
  \label{algo:pBcurve}
  \begin{algorithmic}[1]
    
    \STATE Give uniform knots, i.e.
    \begin{eqnarray}
      t_i:=\frac{i}{n+p}\quad(i=0,\ldots,n+p).
      \label{eq:pBcurve_cond1}
    \end{eqnarray}
    \STATE Give the first $n-p$ vertical coordinates of the control points, i.e. $y_0,\ldots,y_{n-p-1}$, arbitrarily according to the desired shape of the curve. The remaining $p$ vertical coordinates are determined as
    \begin{eqnarray}
      y_{n-p+i}=y_i \quad(i=0,\ldots,p-1).
      \label{eq:pBcurve_cond2}
    \end{eqnarray}
    \STATE Then, the horizontal coordinates of all the control points are given by
    \begin{eqnarray}
      x_i=-\frac{L}{2}-\frac{L(p-1)}{2(n-p)}+\frac{iL}{n-p}\quad(i=0,\ldots,n-1).
      \label{eq:pBcurve_cond3}
    \end{eqnarray}
    
  \end{algorithmic}
\end{algorithm}

\autoref{fig:test-periodic-bspline} shows an example of a periodic B-spline curve that satisfies \autoref{eq:pB1} and \autoref{eq:pB2}. To generate the curve, we first give $L=0.8$, $n=5$, $p=2$, $y_0=1$, $y_1=3$, and $y_2=-2$. Next, \autoref{eq:pBcurve_cond1} determines the knots as $t_0=0$, $t_1=\frac{1}{7}$, $t_2=\frac{2}{7}$, $\ldots$, and $t_7=1$. Then, \autoref{eq:pBcurve_cond2} determines the remaining vertical coordinates, that is, $y_3=1$ and $y_4=3$. Last, \autoref{eq:pBcurve_cond3} gives the horizontal coordinates of all the $n$ control points as $x_0=-0.5333333$, $x_1=-0.2666667$, $x_2=0.000000$, $x_3=0.2666667$, and $x_4=0.5333333$. Then, we can obtain $x(t_p)=-0.4000000$, $x(t_n)=0.4000000$, $y(t_p)=y(t_n)=2.000000$, and $\frac{\diff y(t_p)}{\diff t}=\frac{\diff y(t_n)}{\diff t}=14.00000$. Therefore, we can confirm that \autoref{eq:pB1} and \autoref{eq:pB2} hold.

\begin{figure}[H]
  \centering
  \includegraphics[width=.5\textwidth]{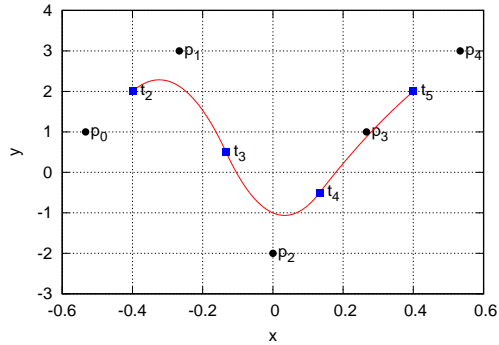}
  \caption{Example of a periodic B-spline curve generated by \autoref{algo:pBcurve}, where $L=0.8$, $n=5$, $p=2$, $y_0=1$, $y_1=3$, and $y_2=-2$ are given. The circles ($\bullet$) denote the control points $\bm{p}_i=(x_i,y_i)^{\rm T}$, where $i=0,\ldots,4$, and the squares (\blue{$\sqbullet$}) denote the points that correspond to $t=t_i$, where $i=0,\ldots,4$.}
  \label{fig:test-periodic-bspline}
\end{figure}

We remark the properties of the periodic B-spline curve:
\begin{remark}{Properties of periodic B-spline curve}
 
  \begin{enumerate}
 
  \item The assumption of uniform knots in \autoref{eq:pBcurve_cond1} does not allow to generate any corner on the curve.
    
  \item The assumption of horizontally uniform control points in \autoref{eq:pBcurve_cond3} restricts the shape of the curve: the $x$-component of the tangential vector of the curve is always positive.
    
  \end{enumerate}
  
  \label{theo:pBcurve_prop}
  
 \end{remark}

We note that the present algorithm to construct a periodic B-spline curve is an extension of the algorithm by Shimba et al.~\cite{shimba2015}. The authors considered the case of $p=2$ only, while we consider the general case of $p$.

%%%%%%%%%%%%%%%%%%%%%%%%%%%%%%%%%%%%%%%%%%%%%%%%%%%%%%%%%%%%
\subsection{Periodic B-spline surface}\label{s:surf_pBsurface}
%%%%%%%%%%%%%%%%%%%%%%%%%%%%%%%%%%%%%%%%%%%%%%%%%%%%%%%%%%%%

We define a \textit{periodic B-spline surface} by the tensor product of two periodic B-spline curves, each of which is generated by Algorithm~\ref{algo:pBcurve}. Then, the point $\vx$ ($\in\bbbr^3$) on a periodic B-spline surface can be written as follows:
\begin{eqnarray}
  \vx(t_1,t_2) = \sum_{i=0}^{n_1-1}\sum_{j=0}^{n_2-1}B_i^{p_1}(t_1)B_j^{p_2}(t_2)\bm{p}_{ij}\quad((t_1,t_2)\in [t_{1,p_1},t_{1,n_1}]\otimes[t_{1,p_2},t_{2,n_2}]),
  \label{eq:pBsurf_x}
\end{eqnarray}
where $n_h$ and $p_h$ denotes the number of control points and the degree of B-spline function, respectively, regarding the coordinate $t_h$, where $h=1$ and $2$. Correspondingly, $t_{h,k}$ denotes the $k$-th knot regarding the coordinate $t_h$, where $0\le k\le n_h+p_h$. Also, $\bm{p}_{ij}$ ($\in\bbbr^3$) denotes the $(i,j)$-th control point.

%Succeeding the requirements of periodic B-spline curve in \autoref{eq:pB1} and \autoref{eq:pB2}, a periodic B-spline surface owns the following properties:
We note that a periodic B-spline surface owns the following properties:
\begin{remark}{Properties of periodic B-spline surface}
  \label{theo:pBsurfprop}

  \begin{enumerate}
    
  \item The boundaries of $t_1=t_{1,p_1}$, $t_1=t_{1,n_1}$, $t_2=t_{2,p_2}$, and $t_2=t_{2,n_2}$ of the parametric coordinates correspond to those of $x_1=-\frac{L_1}{2}$, $x_1=\frac{L_1}{2}$, $x_2=-\frac{L_2}{2}$ and $x_2=\frac{L_2}{2}$ of the physical coordinates, respectively.

  \item The assumption of the uniform knots in \autoref{eq:pBcurve_cond1} does not allow to generate any corners or edges on the periodic B-spline surface (recall the first item of Remark~\ref{theo:pBcurve_prop}). In this case, we can regard a piece of rectangle region $[t_{1,i},t_{1,i+1}]\otimes[t_{2,j},t_{2,j+1}]$ (where $p_1\leq i < n_1$ and $p_2\leq j < n_2$) as a boundary element. Such an element is called a B\'ezier element in the context of the isogeometric analysis. Since there are $(n_1-p_1)(n_2-p_2)$ elements on a surface, we will denote the $i$-th element by $E_i$, where $1\leq i \leq (n_1-p_1)(n_2-p_2)$.

  \item The surface $\vx$ and its derivatives are continuous across the periodic boundary $S^\p$, i.e.
    \begin{subequations}
      \begin{eqnarray}
        &&\frac{\partial^k \vx}{\partial t_1^k}(t_{1,p_1},t_2)=\frac{\partial^k \vx}{\partial t_1^k}(t_{1,n_1},t_2)\quad(k=0,\ldots,p_1-1),\\
        &&\frac{\partial^k \vx}{\partial t_2^k}(t_1,t_{2,p_2})=\frac{\partial^k \vx}{\partial t_2^k}(t_1,t_{2,n_2})\quad(k=0,\ldots,p_2-1).
      \end{eqnarray}%
      \label{eq:pBsurfprop2}%
    \end{subequations}

  \item As we will see in the construction of the basis function (\autoref{s:basis}), we will actually request that $\vx$ and its first order derivative $\frac{\partial\vx}{\partial t}$ is continuous beyond the periodic boundary. The continuity for the higher order derivatives (i.e. $\frac{\partial^2\vx}{\partial t^2}$, $\frac{\partial^3\vx}{\partial t^3}$, $\ldots$) is unnecessary for this purpose, but would be helpful to compute the tangential derivative of the basis function in some applications such as the shape optimisation.
    
  \end{enumerate}
  
\end{remark}

We will construct each of the interfaces, viz. $S_0$, $\ldots$, $S_{n_d-2}$, by a periodic B-spline surface.

%%%%%%%%%%%%%%%%%%%%%%%%%%%%%%%%%%%%%%%%%%%%%%%%%%%%%%%%%%
\section{Quasi-periodic basis and weight functions}\label{s:basis}
%%%%%%%%%%%%%%%%%%%%%%%%%%%%%%%%%%%%%%%%%%%%%%%%%%%%%%%%%%

First, in \autoref{s:buffa}, we will introduce the vector basis function proposed by Buffa et al.~\cite{buffa2010} in order to discretise a surface current density on a surface. This vector basis function is said to be \textit{compatible} in the sense that it obeys the finite dimensional de Rahm diagram~\cite{simpson2018}. Next, in \autoref{s:qpv_basis}, we will modify the Buffa's vector basis function so that it can satisfy the quasi-periodic conditions in \autoref{eq:Pdivconf}, which is necessary to regularise the variational integral equations in \autoref{eq:mom}. Correspondingly, we will construct the vector weight function that can satisfy another regularisation condition in \autoref{eq:Pdivconf_w}. Finally, in \autoref{s:qpv_weight}, we will mention how to determine the B-spline parameters of the vector basis function by considering the assumptions on the parameters.

%%%%%%%%%%%%%%%%%%%%%%%%%%%%%%%%%%%%%%%%%%%%%%%%%%%%%%%%%%
\subsection{Buffa's vector basis function~\cite{buffa2010}}\label{s:buffa}
%%%%%%%%%%%%%%%%%%%%%%%%%%%%%%%%%%%%%%%%%%%%%%%%%%%%%%%%%%

We introduce the vector basis function proposed by Buffa et al.~\cite{buffa2010}. Let us consider a rectangular and smooth surface, denoted by $S$, which is parameterised with the coordinates $u_1$ and $u_2$. Then, the following vector functions $\bm{V}_{1,i,j}$ and $\bm{V}_{2,i,j}$ based on the B-spline functions can be the basis of a surface electric or magnetic current density at $\vx$ ($=\vx(u_1,u_2)\in S$):
\begin{subequations}
  \begin{eqnarray}
    &&\bm{V}_{1,i,j}(\vx):=\frac{1}{J}B_i^{q_1}(u_1)B_j^{q_2-1}(u_2)\frac{\partial \vx}{\partial  u_1}\quad\text{($0\leq i < m_1$, $0\leq j < m_2-1$)},\label{eq:compatibleB1}\\
    &&\bm{V}_{2,i,j}(\vx):=\frac{1}{J}B_i^{q_1-1}(u_1)B_j^{q_2}(u_2)\frac{\partial \vx}{\partial  u_2}\quad\text{($0\leq i < m_1-1$, $0\leq j < m_2$)},\label{eq:compatibleB2}%
  \end{eqnarray}%
  \label{eq:compatibleB}%
\end{subequations}
where $J=J(u_1,u_2)$ denotes the Jacobian, i.e. $\left|\frac{\partial \vx}{\partial  u_1}\times\frac{\partial \vx}{\partial  u_2}\right|$. Also, $U_1 := \{u_{1,0},\ldots,u_{1,m_1+q_1}\}$ and $U_2 := \{u_{2,0},\ldots,u_{2,m_2+q_2}\}$ denote the knot vectors of $B_i^{q_1}(u_1)$ and $B_j^{q_2}(u_2)$, respectively. Then, $U'_1 := \{u_{1,1},\ldots,u_{1,m_1+q_1-1}\}$ and $U'_2 := \{u_{2,1},\ldots,u_{2,m_2+q_2-1}\}$ represent the knot vectors of $B_i^{q_1-1}(u_1)$ and $B_j^{q_2-1}(u_2)$, respectively\footnote{We use the symbols of $m_h$ (number of B-spline functions), $q_h$ (degree), $u_{h,\cdot}$ (knots), and $U_h$ (knot vector) for the underlying vector basis function in order to distinguish them from the symbols of $n_h$, $p_h$, $t_{h,\cdot}$, and $T_h$ for the periodic B-spline surface, where $h=1,2$.}.

For the sake of simplicity, we use the notations
\begin{subequations}
  \begin{eqnarray}
    &&\mb_1:=m_1,\quad \mb_2=m_2-1,\quad \mb_3:=m_1-1,\quad \mb_4:=m_2,\\
    &&\qb_1 := q_1,\quad \qb_2 := q_2-1,\quad \qb_3 := q_1-1,\quad \qb_4 :=q_2.
  \end{eqnarray}
  \label{eq:mb_qb}
\end{subequations}
Then, the vector basis function $\bm{V}_{h,i,j}$ (where $h=1,2$) in \autoref{eq:compatibleB} can be expressed as follows:
\begin{eqnarray}
  \bm{V}_{h,i,j}(\vx)=\frac{1}{J} B_i^{\qb_{2h-1}}(u_1)B_j^{\qb_{2h}}(u_2)\frac{\partial \vx}{\partial u_h} \quad \text{($0\leq i < \mb_{2h-1}$, $0\leq j < \mb_{2h}$)}.
  \label{eq:Nhij}
\end{eqnarray}

Correspondingly, the surface divergence can be expressed as follows~\cite{arnoldus2006}:
\begin{eqnarray}
  \sdiv\bm{V}_{h,i,j}(\vx) = \frac{1}{J}\frac{\partial}{\partial u_h}\left( B_i^{\qb_{2h-1} }(u_1) B_j^{\qb_{2h}}(u_2)\right).
\end{eqnarray}

%%%%%%%%%%%%%%%%%%%%%%%%%%%%%%%%%%%%%%%%%%%%%%%%%%%%%%%%%%%% 
\subsection{Quasi-periodic vector basis function}\label{s:qpv_basis}
%%%%%%%%%%%%%%%%%%%%%%%%%%%%%%%%%%%%%%%%%%%%%%%%%%%%%%%%%%%% 

We propose a vector basis function by modifying the compatible vector basis function $\bm{V}_{h,i,j}$ in \autoref{eq:Nhij} so that it can satisfy the requisite in \autoref{eq:Pdivconf} or the quasi-periodicity of the tangential component of the surface current on an interface, denoted by $S$.

We assume that $S$ is constructed as a periodic B-spline surface mentioned in Section~\ref{s:surf_pBcurve}. Then, the B-spline functions used for $S$ are determined by a set of parameters, i.e. $n_1$, $n_2$, $p_1$, $p_2$, $t_{1,i}$, and $t_{2,j}$, whereas those used for $\bm{V}_{h,i,j}$ are determined by another set, i.e. $m_1$, $m_2$, $q_1$, $q_2$, $u_{1,i}$, and $u_{2,j}$.

In order to construct a vector basis function that satisfies \autoref{eq:Pdivconf}, we need to give some constraints among the parameters in the above two sets. First, the both B-spline functions should be handled as the functions of the common surface parameters, say $(t_1,t_2)$. Then, the basis function's domain of definition, i.e. $[u_{1,q_1},u_{1,m_1}]\otimes[u_{2,q_2},u_{2,m_2}]$, should be identical to the surface's one, i.e. $[t_{1,p_1},t_{1,n_1}]\otimes[t_{2,p_2},t_{2,n_2}]$, which corresponds to the physical domain $[-L_1/2,L_1/2]\otimes[-L_2/2,L_2/2]$ due to the first property in Remark~\ref{theo:pBsurfprop}. To this end, we assume that the knots of $\bm{V}_{h,i,j}$ satisfy the following relationships with those of $S$:
\begin{eqnarray}
  &u_{1,q_1}=t_{1,p_1},\quad u_{1,m_1}=t_{1,n_1},\quad u_{2,q_2}=t_{2,p_2},\quad u_{2,m_2}=t_{2,n_2}\nonumber\\
  \Leftrightarrow\quad &u_{1,\qb_1}=t_{1,p_1},\quad u_{1,\mb_1}=t_{1,n_1},\quad u_{2,\qb_4}=t_{2,p_2},\quad u_{2,\mb_4}=t_{2,n_2}.
  \label{eq:u=t}
\end{eqnarray}
Second, we assume that there are sufficient numbers of knots, i.e.
\begin{subequations}
  \begin{eqnarray}
    \mb_1\ge 2\qb_1,\label{eq:alotofknots1}\\
    \mb_4\ge 2\qb_4.\label{eq:alotofknots2}
  \end{eqnarray}%
  \label{eq:alotofknots}%
\end{subequations}
Third and last, we assume that the knots near the both ends satisfy the following conditions:
\begin{subequations}
  \begin{eqnarray}
    &&\Delta u_{1,i} = \Delta u_{1,i+\mb_1-\qb_1}\quad\text{($i=0,\ldots,2\qb_1-1$)},\label{eq:pbasis_knot1}\\
    &&\Delta u_{2,j} = \Delta u_{2,j+\mb_4-\qb_4}\quad\text{($j=0,\ldots,2\qb_4-1$)},\label{eq:pbasis_knot2}
  \end{eqnarray}
  \label{eq:pbasis_knot}%
\end{subequations}
where $\Delta u_{h,i}:=u_{h,i+1}-u_{h,i}$. 

Under these assumptions in \autoref{eq:u=t}, \autoref{eq:alotofknots}, and \autoref{eq:pbasis_knot}, we can prove that the vector basis function
\begin{eqnarray}
  \Mp_{h,i,j}(\vx):=
  \begin{cases}
    \bm{V}_{1,i,j}(\vx) + \e^{\imath\beta_1}\bm{V}_{1,i+\mb_1-\qb_1,j}(\vx) & (h=1,\ 0 \leq i < \qb_1,\ 0 \leq j < \mb_2),\\
    \bm{V}_{2,i,j}(\vx) + \e^{\imath\beta_2}\bm{V}_{2,i,j+\mb_4-\qb_4}(\vx) & (h=2,\ 0 \leq i < \mb_3,\ 0 \leq j < \qb_4),\\
    \bm{V}_{h,i,j}(\vx) & \text{(otherwise)}
  \end{cases}
                          \label{eq:pMdiv}
\end{eqnarray}
can satisfy the quasi-periodic conditions in \autoref{eq:Pdivconf} for $(t_1,t_2)\in[t_{1,p_1},t_{1,n_1}]\otimes[t_{2,p_2},t_{2,n_2}]$. This is proven in \ref{s:proof_Mp}.

\autoref{fig:pBsp} is helpful to understand the construction of $\Mp_{h,i,j}$ in \autoref{eq:pMdiv} intuitively. Since \autoref{eq:pbasis_knot1} and \autoref{eq:pbasis_knot2} are essentially the same as \autoref{eq:translation_assume},  the B-spline functions consisting of $\bm{V}_{h,i,j}$ in \autoref{eq:Nhij} look like the B-spline functions in the figure. When we focus on the B-spline functions for $u_1$ only, $\Mp_{h,i,j}$ in the first case of \autoref{eq:pMdiv} are constructed as the sum of a B-spline function coloured in red, blue, or green at the LHS in the figure and the B-spline function in the same colour at the RHS, where the phase difference $\e^{\imath\beta_1}$ is multiplied to the latter B-spline function in order to satisfy the quasi-periodic condition. Meanwhile, the B-spline functions coloured in black in the figure are not combined to any others; these correspond to the third case of \autoref{eq:pMdiv}.

It should be noted that Shimba et al.~\cite{shimba2015} constructed a (scalar) B-spline basis function that satisfies the quasi-periodic condition. This is essentially the same as \autoref{eq:pMdiv} but for the scalar or 2D Helmholtz problems. Therefore, we emphasise that the quasi-periodic vector basis function $\Mp_{h,i,j}$ in \autoref{eq:pMdiv} is new.

We also note that, in the particular case of $\qb_1=\qb_4=1$, the vector basis function $\bm{V}_{h,i,j}$ in \autoref{eq:Nhij} is the same as the periodic RWG basis function proposed by Hu et al.~\cite{hu2011}, although these bases are different in shape. Following the notations in \cite{hu2011}, we can express the basis function $\Mp_{h,i,j}$ in the first case of \autoref{eq:pMdiv}, for example, in the following split form:
\begin{eqnarray*}
  \bm{M}^\p_{h,i,j}(\vx) = \bm{M}^{\p-}_{h,i,j}(\vx) + \bm{M}^{\p+}_{h,i,j}(\vx),
\end{eqnarray*}
where $\bm{M}^{\p-}_{h,i,j}(\vx) := \bm{V}_{1,i,j}(\vx)$ and $\bm{M}^{\p+}_{h,i,j}(\vx) := \e^{\imath\beta_1}\bm{V}_{1,i+\mb_1-\qb_1,j}(\vx)$.

In contrast to \autoref{eq:pMdiv}, we may consider not only the quasi-periodicity for the normal component (i.e. $\bm{\tau}$-direction) but also that for the tangential component. This is because, since the surface is smooth as mentioned in Remark~\ref{theo:pBsurfprop}, the surface current densities $\J$ and $\M$ are continuous on the periodic boundary $S^\p$. In this case, similarly to $\Mp_{h,i,j}$, we can say that the following vector basis function $\Np_{h,i,j}$ satisfies the quasi-periodicity for both normal and tangential components:
\begin{eqnarray}
  \Np_{h,i,j}(\vx) :=
  \begin{cases}
    &\bm{V}_{h,i,j}(\vx) + \e^{\imath\beta_1}\bm{V}_{h,i+\mb_{2h-1}-\qb_{2h-1},j}(\vx)\\
    &\qquad(h=1,2,\ 0 \leq i < \qb_{2h-1},\ \qb_{2h} \leq j < \mb_{2h}-\qb_{2h}),\\
    &\bm{V}_{h,i,j}(\vx) + \e^{\imath\beta_2}\bm{V}_{h,i,j+\mb_{2h}-\qb_{2h}}(\vx)\\
    &\qquad(h=1,2,\ \qb_{2h-1} \leq i < \mb_{2h-1}-\qb_{2h-1},\ 0 \leq j < \qb_{2h}),\\
    &\bm{V}_{h,i,j}(\vx) + \e^{\imath\beta_1}\bm{V}_{h,i+\mb_{2h-1}-\qb_{2h-1},j}(\vx) + \e^{\imath\beta_2}\bm{V}_{h,i,j+\mb_{2h}-\qb_{2h}}(\vx)\\
    &\quad+ \e^{\imath\beta_1}\e^{\imath\beta_2}\bm{V}_{h,i+\mb_{2h-1}-\qb_{2h-1},j+\mb_{2h}-\qb_{2h}}(\vx)\\
    &\qquad(h=1,2,\ 0 \leq i < \qb_{2h},\ 0 \leq j < \qb_{2h-1}),\\
    &\bm{V}_{h,i,j}(\vx)\\
    &\qquad\text{(otherwise)}.
  \end{cases}
  \label{eq:pNdiv}
\end{eqnarray}

\autoref{fig:Np} visualises $\Np_{h,i,j}$ for four cases of $(h,i,j)=(1,1,2)$, $(2,0,1)$, $(1,1,0)$, and $(1,3,3)$, which correspond to the first to fourth case in the RHS of \autoref{eq:pNdiv}, respectively.

\begin{figure}[H]
  \centering
  \begin{tabular}{cccc}
%    \includegraphics[width=.22\textwidth]{../../fig_for_paper/periodic/Np/Np_11.eps}
%      &\includegraphics[width=.22\textwidth]{../../fig_for_paper/periodic/Np/Np_32.eps}
%      &\includegraphics[width=.22\textwidth]{../../fig_for_paper/periodic/Np/Np_1.eps}
%      &\includegraphics[width=.22\textwidth]{../../fig_for_paper/periodic/Np/N_27.eps}\\
    \includegraphics[width=.22\textwidth]{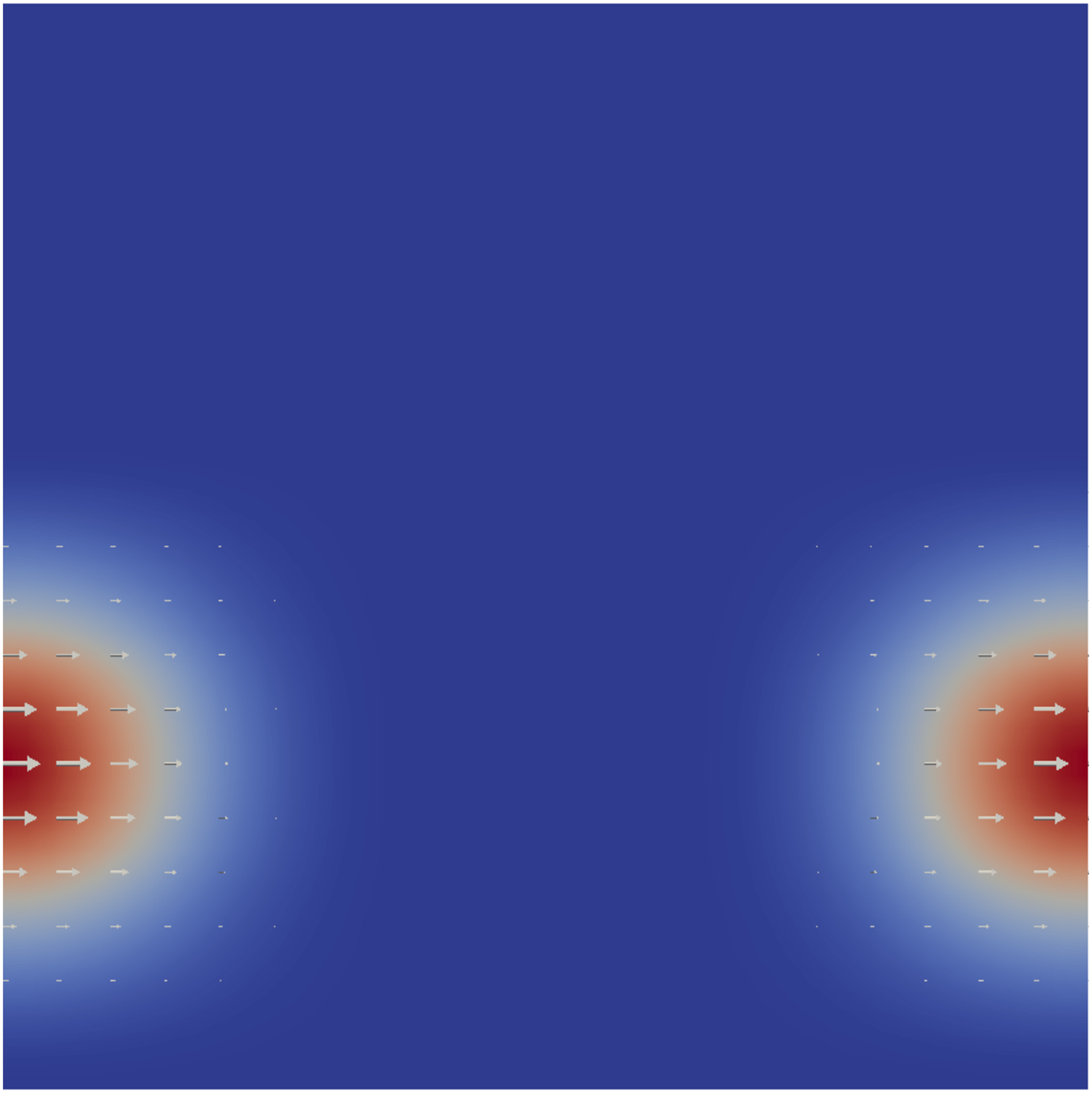}
      &\includegraphics[width=.22\textwidth]{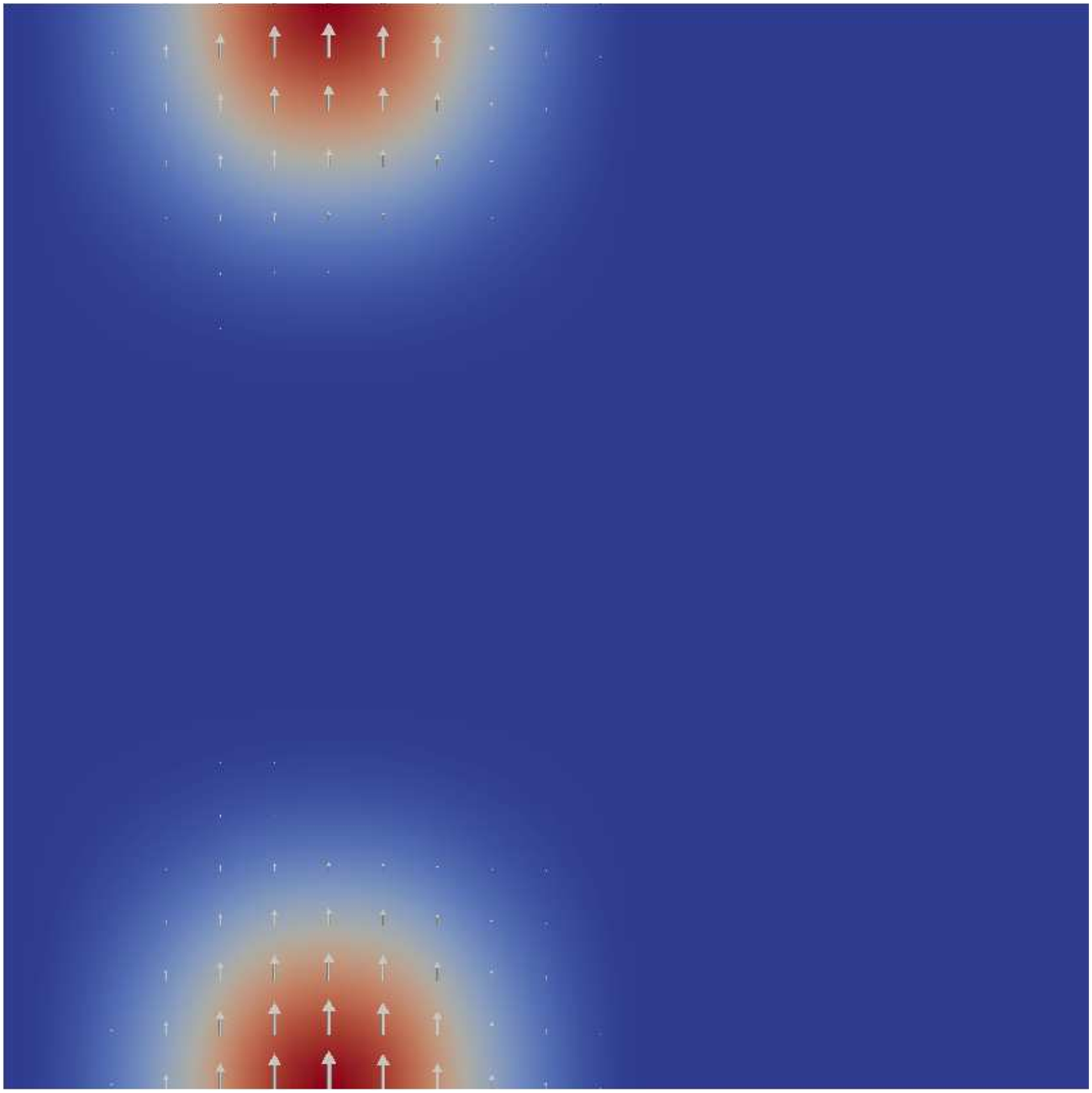}
      &\includegraphics[width=.22\textwidth]{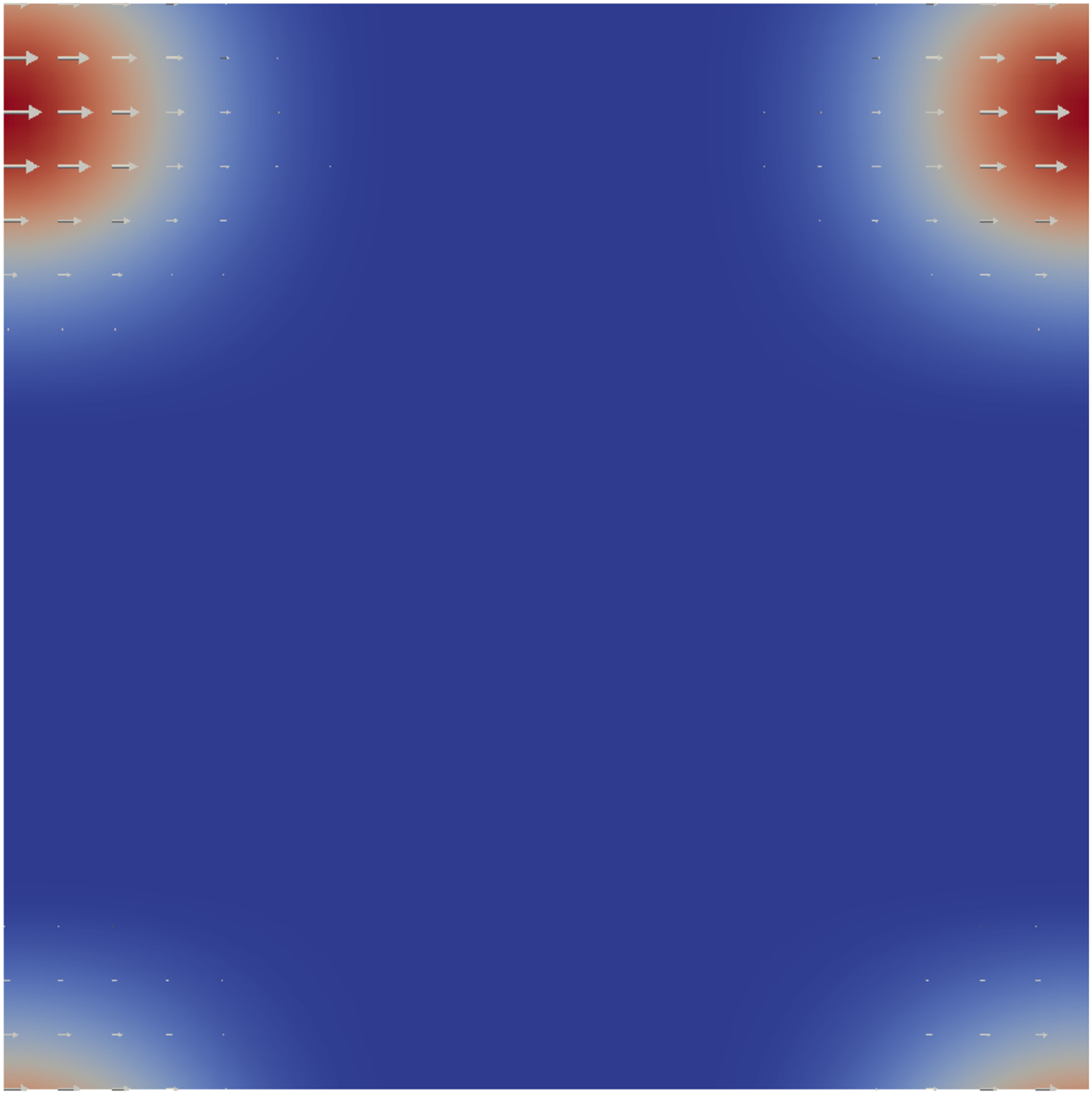}
      &\includegraphics[width=.22\textwidth]{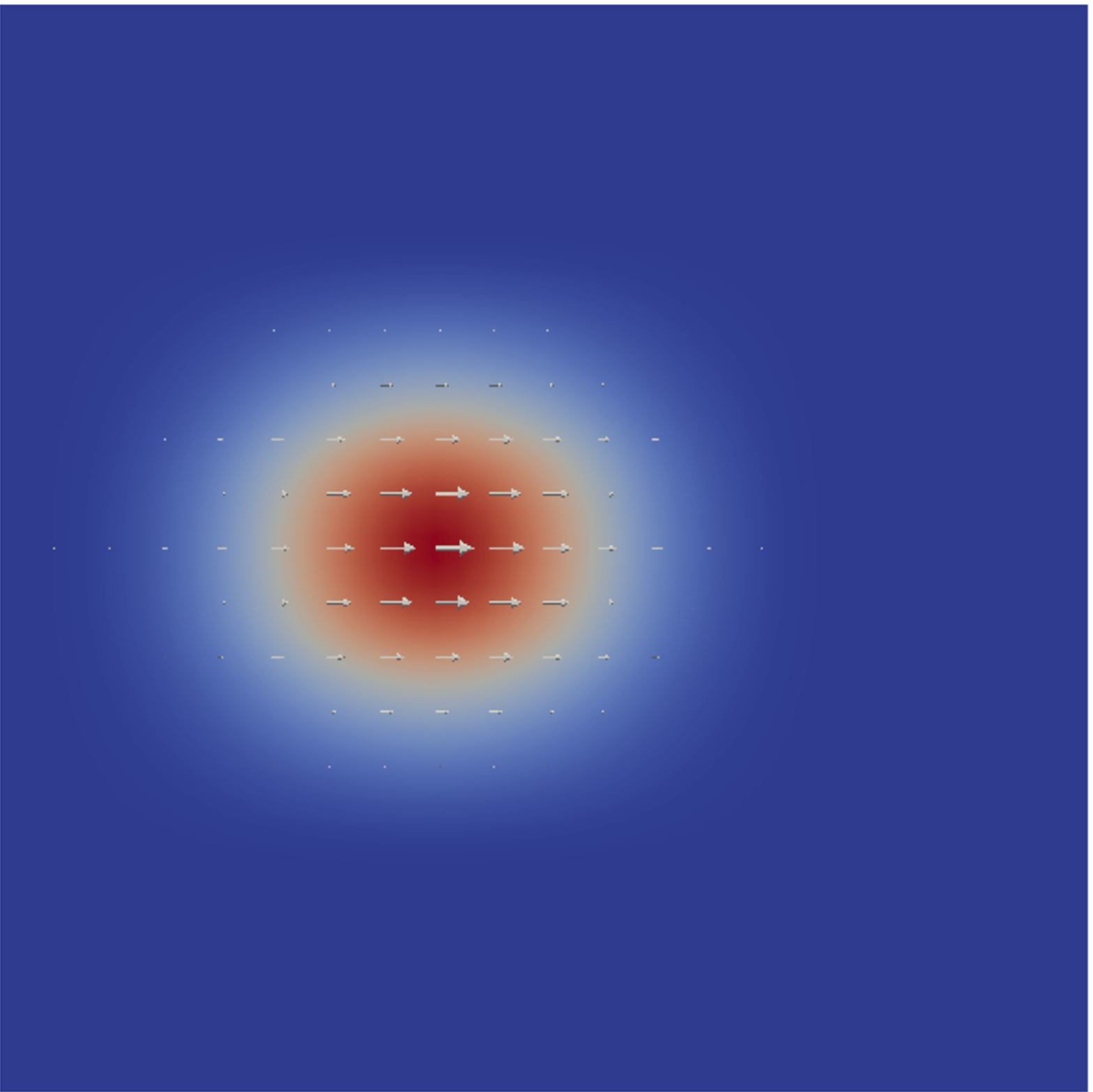}\\
    $\Np_{1,1,2}$ & $\Np_{2,0,1}$ & $\Np_{1,1,0}$ & $\Np_{1,3,3}$\\
  \end{tabular}
  \caption{Vector plot of $\Np_{h,i,j}$ in the case of $q_1=q_2=3$ (thus, $\qb_1=\qb_4=3$ and $\qb_2=\qb_3=2$) on a plane periodic B-spline surface. The colour shows the relative magnitude; blue and red correspond to $0$ and the maximum value of the magnitude $|\Np_{h,i,j}|$, respectively. The present surface is built with the parameters $p_1=p_2=1$, $n_1=n_2=6$, and $L_1=L_2=1$. Then, $m_1$ and $m_2$ are determined to $8$ according to Algorithm~\ref{algo:knots} (then, $\mb_1=\mb_4=8$ and $\mb_2=\mb_3=7$ follow).}
  \label{fig:Np}
\end{figure}

We will use $\Np$ as the vector basis function of a surface current density $\vt$ because a periodic B-spline surface is smooth beyond the periodic boundary. In this case, $\vt$ can be approximated as follows:
\begin{eqnarray}
  \vt(\vx)\approx\sum_{h=1}^2\sum_{i=0}^{\mb_{2h-1}-\qb_{2h-1}-1}\sum_{j=0}^{\mb_{2h}-\qb_{2h}-1} \vtcoef_{h,i,j} \Np_{h,i,j}(\vx) =\sum_{a=1}^{n_a} \vtcoef_{a} \Np_{a}(\vx),
  \label{eq:approx_current}
\end{eqnarray}
where $\vtcoef_{h,i,j}$ represents the (unknown) coefficient. Also, for brevity, we combined the indices $h$, $i$ and $j$ into a single index $a$, with defining $n_a:=(\mb_1-\qb_1)(\mb_2-\qb_2)+(\mb_3-\qb_3)(\mb_4-\qb_4)$. Similarly, we simply denote $\Mp_{h,i,j}$ as $\Mp_a$.

In \autoref{s:num_ex2_N_vs_M}, we will compare $\Np$ with $\Mp$ numerically.

%%%%%%%%%%%%%%%%%%%%%%%%%%%%%%%%%%%%%%%%%%%%%%%%%%%%%%%%%%%%
\subsection{Quasi-periodic weight function}\label{s:qpv_weight}
%%%%%%%%%%%%%%%%%%%%%%%%%%%%%%%%%%%%%%%%%%%%%%%%%%%%%%%%%%%%

As mentioned in \autoref{s:req_weight}, a vector weight function needs to satisfy the quasi-periodic conditions in \autoref{eq:Pdivconf_w}, where the phase difference is not $\imath\beta_h$ but $-\imath\beta_h$. Therefore, analogously to the basis functions, we may use the complex conjugate of $\Np_a$ in \autoref{eq:pNdiv}, that is,
\begin{eqnarray}
  \bm{w}_a^\p(\vx) = \overline{\Np_a(\vx)},
  \label{eq:weight_function}                             
\end{eqnarray}
where $\overline{(\cdot)}$ denotes the complex conjugate.

%%%%%%%%%%%%%%%%%%%%%%%%%%%%%%%%%%%%%%%%%%%%%%%%%%%%%%%%%%%%
\subsection{Determining the knots for vector basis functions}\label{s:vector_knot}
%%%%%%%%%%%%%%%%%%%%%%%%%%%%%%%%%%%%%%%%%%%%%%%%%%%%%%%%%%%% 

The parameters of B-spline functions for both surface and vector basis function must be predefined. Regarding the surface, we determine the values of $n_1$, $n_2$, $p_1$, $p_2$, $t_{1,i}$, and $t_{2,j}$ according to Algorithm~\ref{algo:pBcurve}. At this point, it is not evident how to determine the parameters of the vector basis functions, i.e. $m_1$, $m_2$, $q_1$, $q_2$, $u_{1,i}$, and $u_{2,j}$. This is because we need to take the assumptions of \autoref{eq:u=t}--\autoref{eq:pbasis_knot} into account.

We address this issue by \autoref{algo:knots}. We apply this algorithm to the coordinates $t_1$ and $t_2$ successively. The resulting knots of the basis function can actually satisfy \autoref{eq:u=t}--\autoref{eq:pbasis_knot}.

\begin{algorithm}[H]
  \caption{Determination of the knots of the vector basis function.}
  \label{algo:knots}
  \begin{algorithmic}[1]
    \STATE Let $p$, $n$, and $T:=\{t_0,\ldots,t_{n+p}\}$ be the predefined parameters of the periodic B-spline surface for either $t_1$ or $t_2$ coordinate. Then, once the degree $q$ is given, this algorithm determines the parameter $m$ and the knot vector $U:=\{u_0,\ldots,u_{m+q}\}$ of a vector basis function (i.e. $\Mp$ in \autoref{eq:pMdiv} or $\Np$ in \autoref{eq:pNdiv}) for the underlying coordinate.
    \IF {$p \ge q$}
    
    \STATE We need to use less number of knots for $U$ than that for $T$. To this end, we may remove $p-q$ ($=:r$) knots from both ends of $T$ to yield
    \begin{eqnarray*}
      U := \{t_{r},\ldots,t_{r+q-1}\ldots,t_{n+p-r} \}=:\{u_0,\ldots,u_{m+q}\},
    \end{eqnarray*}
    where $m$ is determined as $n-r$.
    \ELSIF {$p < q$}
    
    \STATE We need to use more number of knots for $U$ than that for $T$. To this end, we may add $q-p$ ($=:s$) knots to the both ends of $T$ to yield
    \begin{eqnarray*}
      U = \{\tau^-_0, \ldots, \tau^-_{s-1}, t_0, \ldots, t_{n+p}, \tau^+_{0}, \ldots, \tau^+_{s-1} \} =:\{u_0,\ldots,u_{m+q}\},
    \end{eqnarray*}
    where $m$ is determined as $n+s$ and the added knots $\tau^-_i$ ($<0$) and $\tau^+_i$ ($>1$) are defined as
    \begin{eqnarray*}
      \tau^-_i := t_{n-p-i} - (t_{n-p} - t_0),\quad
      \tau^+_i := t_{2p + i} + (t_{n+p} - t_{2p})\quad(i=1,\ldots,s).
    \end{eqnarray*}
    
    \ENDIF
    
    \STATE The resulting knots actually satisfy $u_q=t_p$, $u_m=t_n$ and $\Delta u_i=\Delta u_{i+m-q}$ ($i=0,\ldots,2q-1$), which correspond to \autoref{eq:u=t} and \autoref{eq:pbasis_knot}, respectively.
    
    \IF {$m<2q$}
    \STATE We repeat applying a uniform knot insertion to $U$ so that $m\ge 2q$ in \autoref{eq:alotofknots} is satisfied.
    \ENDIF
    
  \end{algorithmic}
\end{algorithm}

To explain \autoref{algo:knots}, \autoref{fig:pNdiv_knot}(a) shows an example of $T$ and the associated B-spline functions in the case of $p=3$ and $n=10$. \autoref{fig:pNdiv_knot}(b) and (c) show $U$ and the associated B-spline functions in the case of $q=2$ and $4$, respectively.

\begin{figure}[H]
  \centering
  \includegraphics[width=.6\textwidth]{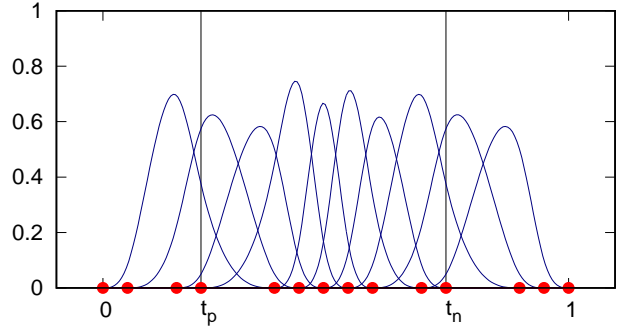}\\
  (a)~Knots of $T$ and the associated B-spline functions in the case of $p=3$, $n=10$, and $T=\{0, \frac{1}{19}, \frac{3}{19}, \frac{4}{19}, \frac{7}{19}, \frac{8}{19}, \frac{9}{19}, \frac{10}{19}, \frac{11}{19}, \frac{13}{19}, \frac{14}{19}, \frac{17}{19}, \frac{18}{19}, 1\}$.
  \includegraphics[width=.6\textwidth]{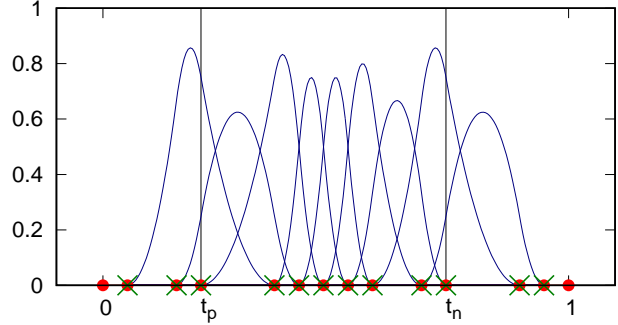}\\
  (b)~Knots of $U$ and the associated B-spline functions in the case of $q=2$. Here, \autoref{algo:knots} gives $m=n-(p-q)=9$ and $U=\{\frac{1}{19}, \frac{3}{19}, \frac{4}{19}, \frac{7}{19}, \frac{8}{19}, \frac{9}{19}, \frac{10}{19}, \frac{11}{19}, \frac{13}{19}, \frac{14}{19}, \frac{17}{19}, \frac{18}{19}\}$.
  \includegraphics[width=.6\textwidth]{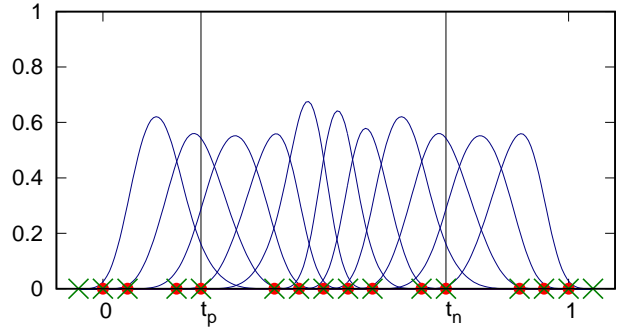}\\
  (c)~Knots of $U$ and the associated B-spline functions in the case of $q=4$. Here, \autoref{algo:knots} gives $m=n+(q-p)=11$ and $U=\{-\frac{1}{19}, 0, \frac{1}{19}, \frac{3}{19}, \frac{4}{19}, \frac{7}{19}, \frac{8}{19}, \frac{9}{19}, \frac{10}{19}, \frac{11}{19}, \frac{13}{19}, \frac{14}{19}, \frac{17}{19}, \frac{18}{19}, 1, \frac{20}{19}\}$.
  \caption{Example of constructing the knot vector $U$ from $T$. The red marks `$\bullet$' denote the knots in $T$, while the green marks `$\times$' denote the knots in $U$.}
  \label{fig:pNdiv_knot}
\end{figure}

%%%%%%%%%%%%%%%%%%%%%%%%%%%%%%%%%%%%%%%%%%%%%%%%%%%%%%%%%%%%
\section{Galerkin IGBEM}\label{s:igbem}
%%%%%%%%%%%%%%%%%%%%%%%%%%%%%%%%%%%%%%%%%%%%%%%%%%%%%%%%%%%%

We will establish the (Galerkin) IGBEM for the problem in \autoref{eq:QPBVP}, i.e. the 3D doubly-periodic problems in a layered structure, on the basis of (i)~the periodic B-spline surface (investigated in \autoref{s:surface}) and (ii)~the quasi-periodic vector basis and weight functions (\autoref{s:basis}). First, we will outline the discretisation of the variational equations in \autoref{eq:mom}. Successively, the evaluation of the boundary integrals in the resulting coefficient matrix will be mentioned.

%%%%%%%%%%%%%%%%%%%%%%%%%%%%%%%%%%%%%%%%%%%%%%%%%%%%%%%%%%%%
\subsection{Discretisation of BIE}\label{s:disc}
%%%%%%%%%%%%%%%%%%%%%%%%%%%%%%%%%%%%%%%%%%%%%%%%%%%%%%%%%%%%

First, we express an interface $S_i$ ($i=0,\ldots,\nd-2$) with a periodic B-spline surface according to \autoref{s:surf_pBsurface}.

Second, we discretise the electric and magnetic current densities as in \autoref{eq:approx_current}. To this end, we define $\J^{(i)}$ (respectively, $\M^{(i)}$) as $\J_i$ (respectively, $\M_i$) on $S_i$; then, $\J_{i+1}\equiv-\J^{(i)}$ (respectively, $\M_{i+1}\equiv-\M^{(i)}$) holds from the boundary condition in \autoref{eq:qpbBC1} (respectively, \autoref{eq:qpbBC2}). Then, $\J^{(i)}$ and $\M^{(i)}$ are represented as
\begin{eqnarray}
  \J^{(i)}(\vx) \simeq \sum_{a=1}^{n_a^{(i)}} J_a^{(i)} \Np_a(\vx),\quad
  \M^{(i)}(\vx) \simeq \sum_{a=1}^{n_a^{(i)}} M_a^{(i)} \Np_a(\vx),
  \label{eq:approx_JM}
\end{eqnarray}
where $J_a^{(i)}$ and $M_a^{(i)}$ are $2n_a^{(i)}$ unknown coefficients on $S_i$.

Third, we choose $\Np_a$ as the weight function $\bm{w}$ on every interface.

Successively, we substitute $\J^{(i)}$, $\M^{(i)}$, and $\bm{w}=\Np_a$ into the variational integral equations in \autoref{eq:mom}. As a result, we can obtain a set of $N$ ($:=\sum_{i=0}^{\nd-2}2n_a^{(i)}$) linear equations. We solve it by the LU decomposition. Since the equations in \autoref{eq:mom} are related to the three interfaces, i.e. $S_{i-1}$, $S_i$ and $S_{i+1}$, the coefficient matrix has a certain block structure. However, we will not utilise the structure in the solution.

%%%%%%%%%%%%%%%%%%%%%%%%%%%%%%%%%%%%%%%%%%%%%%%%%%%%%%%%%%%%
\subsection{Evaluation of surface integrals}\label{s:integral}
%%%%%%%%%%%%%%%%%%%%%%%%%%%%%%%%%%%%%%%%%%%%%%%%%%%%%%%%%%%%

We describe the way to evaluate the double integrals in the operators $\mathscr{L}^\p_d$ in \eqref{eq:opL} and $\mathscr{K}^\p_d$ in \eqref{eq:opK}. The singularity of these integrals are $O(r^{-1})$, where $r:=|\vx-\vy|$.\footnote{This is evident for $\mathscr{L}^\p_d$ because of the regularisation (recall \autoref{s:regularisation}). Meanwhile, the singularity of $\mathscr{K}^\p_d$ seems $O(r^{-2})$ at first. However, since the weight function $\bm{w}_a$ is perpendicular to $\vt\times\nabla_y \Gp_d$, the singularity is actually $O(r^{-1})$. This can be confirmed by expanding the integrand $\bm{w}_a\cdot\vt\times\nabla_y \Gp_d$ in terms of $\vy$ around the vicinity of $\vx$.} Therefore, on an interface $S$, we may consider the following type of double-surface integral:
\begin{eqnarray*}
  \int_S {f}(\vx) \int_S K^\p(\vx-\vy) g(\vy) \diff S_y \diff S_x
  = \sum_i \sum_j \underbrace{\int_{\Sb_i} f(\vx) \int_{\Sb_j} K^\p(\vx-\vy) g(\vy) \diff S_y \diff S_x}_{\displaystyle I_{ij}},
\end{eqnarray*}
where $\Sb_i$ stands for the $i$-th B\'ezier element (recall Remark~\ref{theo:pBsurfprop} in \autoref{s:surf_pBsurface}) and $f$ and $g$ are regular functions. Also, $K^\p$ has the singularity of $O(r^{-1})$ and is quasi-periodic. In what follows, we describe the case of $K^\p = \Gp_d$. In what follows, the subscript $d$ will be dropped for simplicity.

The way of computing $I_{ij}$ can be classified to the following three cases:
\begin{enumerate}
  
\item Singular case I
  
  The underlying integral $I_{ij}$ is singular if $\Sb_i$ and $\Sb_j$ are identical or share an edge or vertex. In this case,  we split $I_{ij}$ into the singular and regular parts as follows:
  \begin{eqnarray*}
    I_{ij} = I^{\rm sing}_{ij} + I^{\rm reg}_{ij},
  \end{eqnarray*}
  where
  \begin{eqnarray*}
    &&I^{\rm sing}_{ij} := \int_{\Sb_i} f(\vx) \int_{\Sb_j} G(\vx-\vy) g(\vy) \diff S_y \diff S_x,\\
    &&I^{\rm reg}_{ij} := \int_{\Sb_i} f(\vx) \int_{\Sb_j} \sum_{\bm{\mu}\in\mathbb{Z}^2\setminus(0,0)}\e^{\imath\vk^\inc\cdot\bm{p}^{(\bm{\mu})}}G(\vx-(\vy + \bm{p}^{(\bm{\mu})})) g(\vy) \diff S_y \diff S_x .
  \end{eqnarray*}
  Then, we compute $I^{\rm sing}_{ij}$ by the Frangi's method~\cite{frangi2002}, which can evaluate the double-surface integral in the four dimensional space simultaneously with the help of the Duffy's variable transformation. On the other hand, we apply the Gauss-Legendre (GL) formula to the regular integral $I^{\rm reg}_{ij}$. Basically, we evaluate $I^{\rm reg}_{ij}$ as
  \begin{eqnarray*}
    I^{\rm reg}_{ij} = \int_{\Sb_i} f(\vx) \int_{\Sb_j} \left( \Gp(\vx-\vy) - G(\vx-\vy)\right) g(\vy) \diff S_y \diff S_x,
  \end{eqnarray*}
  where $\Gp$ is computed by the Ewald's method (\ref{s:ewald}). It should be noted that we cannot compute $\Gp$ in the case of $\vx=\vy$. For this case, we use the explicit expression of the singular part of $G^\p$ and evaluate it by the Ewald's method~\cite[Theorem 3.8]{arens2010}.
  
\item Singular case II

  Even if $\Sb_i$ is separated from $\Sb_j$ in the primary cell, $I_{ij}$ is singular if $\Sb_i$ shares an edge or vertex with a replica of $\Sb_j$. \autoref{fig:singperiodic} illustrates three examples. In the LHS case, the $\bm{p}^{(1,0)}$-component of $G^\p$, i.e. $\frac{\mathrm{e}^{\imath k_i |\vx - (\vy + L_1\bm{e}_1)|}}{4\pi|\vx - (\vy + L_1\bm{e}_1)|}$ diverges when $\vx = \vy + L_1\bm{e}_1$. Therefore, similarly to the previous case, we apply the add-and-subtract technique together with the Frangi's method to the present singular integral $I_{ij}$.

\item Non-singular case

  If $\Sb_i$ is separated from both $\Sb_j$ and $S^\p$, $I_{ij}$ is non-singular. Then, we directly exploit the GL formula to evaluate $I_{ij}$.

\end{enumerate}

\begin{figure}[H]
  \centering
  \begin{tabular}{ccc}
    \iffalse    
%    \includegraphics[height=.18\textheight]{figure/fig-singperiodic-a.eps}
%    &\includegraphics[height=.18\textheight]{figure/fig-singperiodic-b.eps}
%    &\includegraphics[height=.18\textheight]{figure/fig-singperiodic-c.eps}
    \includegraphics[height=.18\textheight]{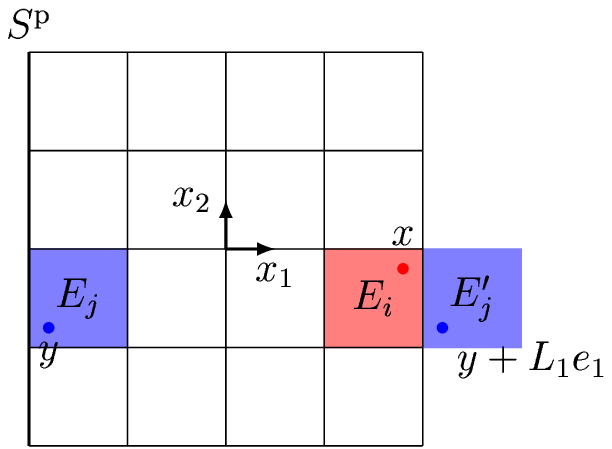}
    &\includegraphics[height=.18\textheight]{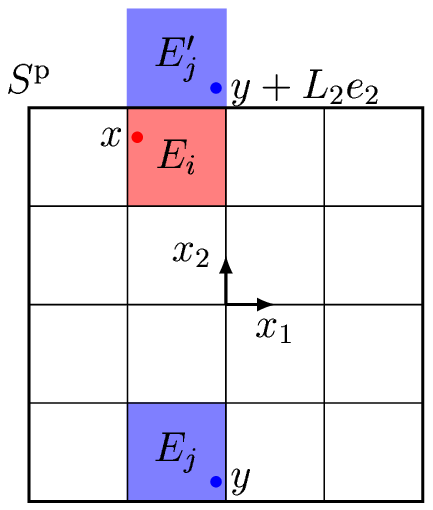}
    &\includegraphics[height=.18\textheight]{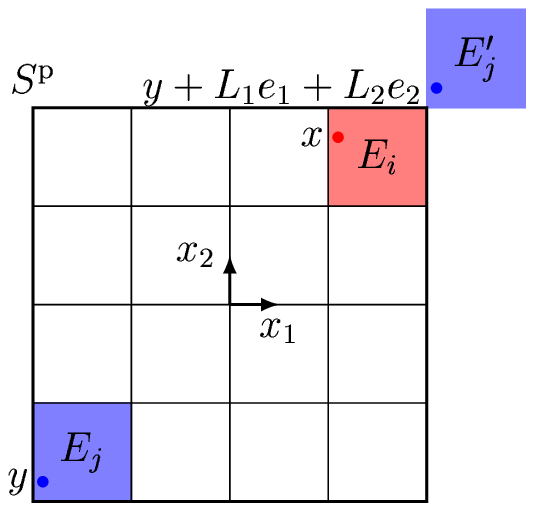}
    \else
%    \input{figure/fig-singperiodic-a.pgf}
%    &\input{figure/fig-singperiodic-b.pgf}
%    &\input{figure/fig-singperiodic-c.pgf}
    \input{fig/fig-singperiodic-a.pgf}
    &\input{fig/fig-singperiodic-b.pgf}
    &\input{fig/fig-singperiodic-c.pgf}
    \fi
  \end{tabular}
  \caption{Examples of the singular case II. In each sub-figure, $4\times 4$ squares represent B\'ezier elements on the underlying interface $S$, which is in the primary cell. A B\'ezier element $\Sb_i$, which is coloured in red, shares an edge or vertex with a replica $\Sb_j'$ of another element $\Sb_j$ through the periodic boundary $S^\p$.}
  \label{fig:singperiodic}
\end{figure}

\section{Numerical experiments}\label{s:num} % aka numex.tex

We assess the proposed IGBEM numerically through two examples.

%%%%%%%%%%%%%%%%%%%%%%%%%%%%%%%%%%%%%%%%%%%%%%%%%%%%%%%%%%%
\subsection{Verification --- Problem 1}\label{s:num_ex1}
%%%%%%%%%%%%%%%%%%%%%%%%%%%%%%%%%%%%%%%%%%%%%%%%%%%%%%%%%%%

To verify the accuracy of the developed IGBEM, we solved a scattering problem due to plain parallel dielectric substrates consisting of five layers, where the material constants were virtually given as $\varepsilon_0=1$, $\varepsilon_1=2.25$, $\varepsilon_2=4$, $\varepsilon_3=2.25$, $\varepsilon_4=1$ [\si{\farad/\meter}], and $\mu_0=\cdots=\mu_4=1$ [\si{\henry/\meter}] (\autoref{fig:numex_layer}). We let $L_1=L_2=1$ [\si{\meter}], although these periods are arbitrary in this configuration. Letting the angular frequency $\omega$ be $8$ [\si{\radian/\meter}], we considered an oblique incident planewave of spherical angles $\theta=\phi=\frac{\pi}{4}$ \si{\radian}, i.e.
\begin{eqnarray*}
  &&\E^\inc(\vx) = \frac{1}{\sqrt{3}}\left(1,1,1\right)^{\rm T}~[\si{\volt/\meter}],\quad
  \H^\inc(\vx) = \frac{1}{\sqrt{2}}\left(1,-1,0\right)^{\rm T}~[\si{\ampere/\meter}],\\
  &&\bm{k}^\inc(\vx) = \frac{k_0}{\sqrt{3}}\left(1,1,-1\right)^{\rm T}~[\si{\radian/\meter}].
\end{eqnarray*}
A similar multi-layer problem was solved by Otani et al. in the case of the RWG basis function~\cite{otani2008}.

\begin{figure}[H]
  \centering
  \includegraphics[width=.35\textwidth]{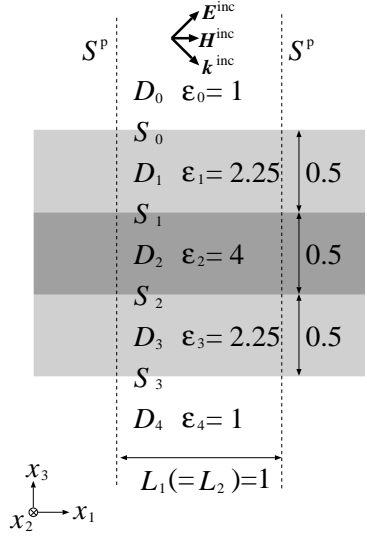}
  \caption{Scattering problem due to the parallel dielectric layers (\textbf{Problem 1}). The magnetic permeability is assumed to be one in all the layers. See \autoref{s:num_ex1}.}
  \label{fig:numex_layer}
\end{figure}

To perform the IGBEM, we generated each interface with a periodic B-spline surface using $p_1=p_2=1$ ($=:p$) and $n_1=n_2=6$ ($=:n$). We call the generated surface the initial mesh or Mesh0. By inserting knots into Mesh0 uniformly, we obtained a finer mesh or Mesh1. In the same way, we generated Mesh2 from Mesh1. \autoref{fig:numex_layer_mesh} shows the knot lines of every mesh. In this figure, a square consisting of four adjacent knot lines represents a B{\'e}zier element.

\begin{figure}[H]
  \centering
  \begin{tabular}{ccc}
%    \includegraphics[width=.3\textwidth]{\figdir/periodic/problem/plane/mesh0.eps}
%    &\includegraphics[width=.3\textwidth]{\figdir/periodic/problem/plane/mesh1.eps}
%    &\includegraphics[width=.3\textwidth]{\figdir/periodic/problem/plane/mesh2.eps}\\
    \includegraphics[width=.3\textwidth]{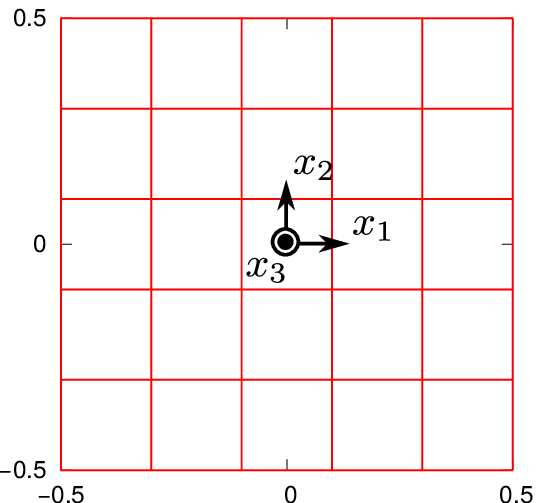}
    &\includegraphics[width=.3\textwidth]{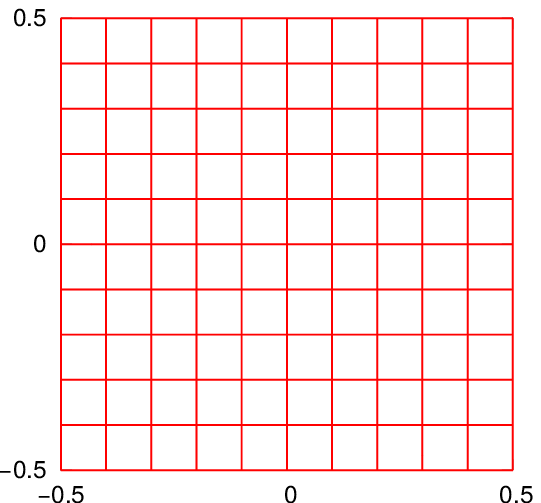}
    &\includegraphics[width=.3\textwidth]{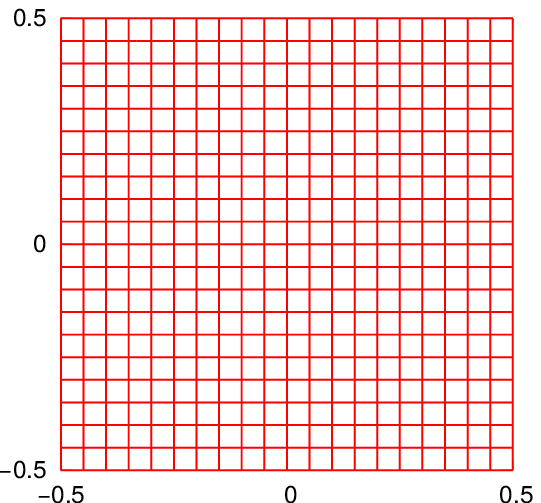}\\
    Mesh0 & Mesh1 & Mesh2
  \end{tabular}  
  \caption{Meshes (knot lines) used for \textbf{Problem 1}. See \autoref{s:num_ex1}.}
  \label{fig:numex_layer_mesh}
\end{figure}

Regarding the vector basis function $\Np$ in \autoref{eq:pNdiv}, we varied the degree $q$ ($:=q_1=q_2$) from 1 to 4. Here, the knot vector ($U$) was determined according to Algorithm~\ref{algo:knots} for every $q$.

Regarding the GL quadrature used for every integral variable, we used the 12 points formula in the case of (nearly-)singular integrals, while we did the 4 points formula in the case of non-singular integrals. We note that, as usual in the conventional BEM, it would be possible to determine the number of quadrature points adaptively. In addition, a technique of sub-division is helpful to improve the accuracy of a surface integral~\cite{taus2015,takahashi2022shape}. However, these are not considered in this study because they would need complicated implementations and could result in a high computational cost.

Regarding the Ewald's method, we set $10^{-14}$ to the threshold $\epsilon_{\rm ewald}$ in both \autoref{eq:ewald_E} and \autoref{eq:ewald_G}.

As a reference, we solved this problem with the transfer matrix (T-matrix) method~\cite{waterman1965}. Then, we measured the relative error $E_{\mathrm{rel}}$ of a surface current density, say $\vt$, computed by the IGBEM, denoted by $\vt_{\rm target}$, from one computed by the T-matrix method, denoted by $\vt_{\rm reference}$, with the $L^2$ norm, i.e.
\begin{eqnarray*}
  E_{\mathrm{rel}} := \frac{\sqrt{\sum_{i=0}^3\int_{S_i} |\vt_{\rm target} - \vt_{\rm reference}|^2 \dS}}{\sqrt{\sum_{i=0}^3\int_{S_i} |\vt_{\rm reference}|^2 \dS}},
\end{eqnarray*}
where $\vt$ is $\J^{(i)}$ or $\M^{(i)}$; recall \autoref{eq:approx_JM}. Here, the surface (non-singular) integrals in the RHS were evaluated by using the tensor product of the 4 points GL quadrature rules.

\autoref{fig:pfilm_error} plots $E_{\mathrm{rel}}$ for $\J$ and $\M$ against the inverse of the representative length $h$ of B{\'e}zier elements, i.e. $h:=\sqrt{\max_i(\text{Area of $\Sb_i$})}$. We can observe that the relative error becomes smaller as the mesh becomes finer for every degree $q$. In addition, the asymptotic convergence rate was nearly $O(h^{-q})$. These tendencies are consistent to the non-periodic case~\cite{simpson2018,dolz2018}.  The present result verifies the accuracy of the proposed IGBEM.

\begin{figure}[h]
  \centering
  \begin{tabular}{cc}
%    \includegraphics[width=.45\textwidth]{../fig_comp_on_s_by_ttaka/fig-plane-ErrJ.eps}
%    & \includegraphics[width=.45\textwidth]{../fig_comp_on_s_by_ttaka/fig-plane-ErrM.eps}
    \includegraphics[width=.45\textwidth]{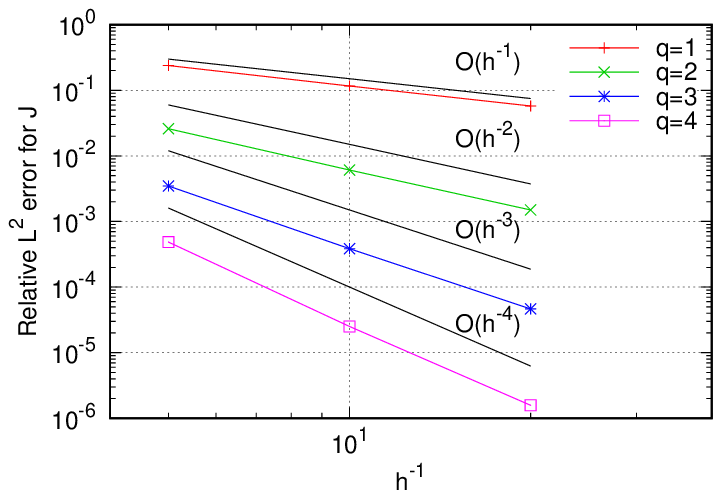}
    & \includegraphics[width=.45\textwidth]{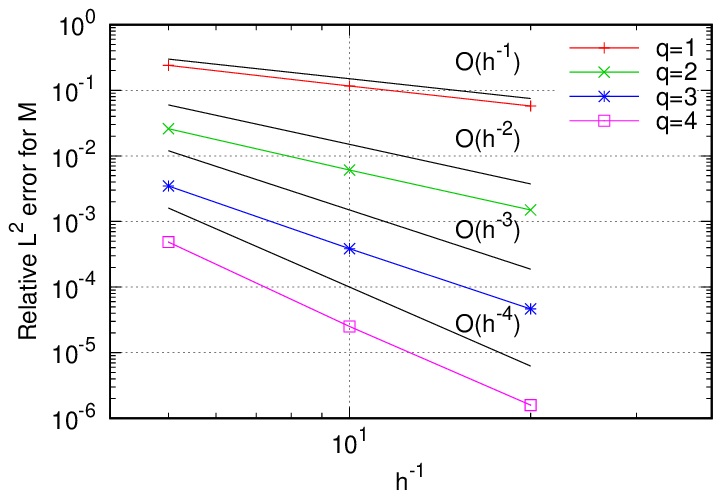}
  \end{tabular}
  \caption{Relative $L^2$ error $E_{\mathrm{rel}}$ for the surface current densities $\J$ (left) and $\M$ (right) in \textbf{Problem 1}. The black lines show the asymptotic rate of convergence. See \autoref{s:num_ex1}.}
  \label{fig:pfilm_error}
\end{figure}

In Problem 1, the representative length is regarded as $L_1=L_2=1$ [\si{\meter}], whereas the wavelength in $D_0$ is $\frac{2\pi}{k_0}=0.785$ [\si{\meter}]. Hence, the present problem can be considered as a low-frequnecy one. However, when performing the present IGBEM, there is no restriction on the frequency (wavenumber) from the theoretical viewpoint. We can expect a certain accuracy according to the number of (B{\'e}zier) elements per wavelength, as in the ordinary BEM. Nevertheless, it would be difficult to apply the present IGBEM to high-frequency problems owing to possible large computation time and memory usage. This issue could be resolved by, for example, enhancing the pFMM~\cite{otani2008} from the RWG basis function to the present B-spline basis functions $\Np$ and $\Mp$, although the enhancement is left as a future work.

As observed in \autoref{fig:pfilm_error}, there was no significant difference between the numerical accuracy of $\J$ and $\M$ in the following problem. We will thus show only the result of $\J$ hereafter.

%%%%%%%%%%%%%%%%%%%%%%%%%%%%%%%%%%%%%%%%%%%%%%%%%%%%%%%%%%%
\subsection{Discussions}
%%%%%%%%%%%%%%%%%%%%%%%%%%%%%%%%%%%%%%%%%%%%%%%%%%%%%%%%%%%

\subsubsection{Applicability to a non-plane surface --- Problem 2}\label{s:num_ex2}

In order to see that the present IGBEM can work for non-plane surfaces, we considered a sinusoidal surface between two layers. Letting $L_1=L_2=1$, $n_1=n_2=9$ ($:=n$) and $p_1=p_2=4$ ($:=p$), we gave the third component $(\bm{p}_{i,j})_3$ of the $(i,j)$-th control point $\bm{p}_{i,j}$ as
\begin{eqnarray*}
  (\bm{p}_{i,j})_3=0.3\cos\left(2\pi L_1 (\bm{p}_{i,j})_1\right) \cos\left(2\pi L_2 (\bm{p}_{i,j})_2\right)\quad(0\le i<n_1-p_1,\ 0\le j<n_2-p_2).
\end{eqnarray*}
Meanwhile, the horizontal components of $\bm{p}_{i,j}$ were determined according to \autoref{eq:pBcurve_cond3}. The generated periodic B-spline surface is called Mesh0, which is shown in \autoref{fig:num_ex2_mesh0}. Note that the control points are generally apart from the generated B-spline surface due to the nature of the B-spline functions.

Similarly to Problem 1 in \autoref{s:num_ex1}, we generated a sequence of meshes, i.e. Mesh0--3, by the knot insertion.

\begin{figure}[H]
  \centering
  \includegraphics[width=.5\textwidth]{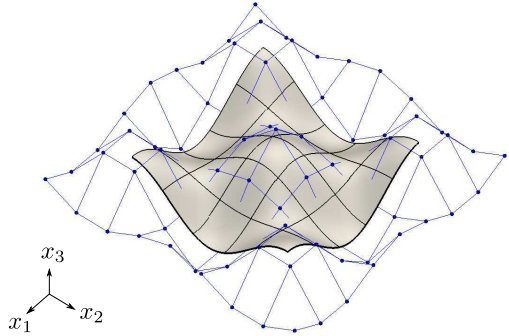}
  \caption{The initial mesh (Mesh0) used for \textbf{Problem 2}. Here, the blue points and lines show the control points and knot lines, respectively. See \autoref{s:num_ex2}.}
  \label{fig:num_ex2_mesh0}
\end{figure}

We used the same incident wave as the previous problem but the angular frequency $\omega$ was chosen as 10 [\si{\radian/\second}]. To compute the exact solution, we let both media be the same, i.e. $\varepsilon_1=\varepsilon_2=1$ and $\mu_1=\mu_2=1$. Then, the exact solutions of $\J$ and $\M$ are given as $\J^{\rm exact} = \vn\times\Hinc$ and $\M^{\rm exact}=\Einc\times\vn$, respectively, regardless the shape of the surface.

\autoref{fig:pcoscos_error} shows the relative $L^2$-error of $\J$ for $q$ ($:=q_1=q_2$) of 1 to 4. Similarly to the previous problem, the error converged monotonically as the mesh size $h$ decreased for every $q$. The asymptotic convergence rate resulted in $O(h^{-q})$ again.

\begin{figure}[H]
  \centering
  \includegraphics[width=.5\textwidth]{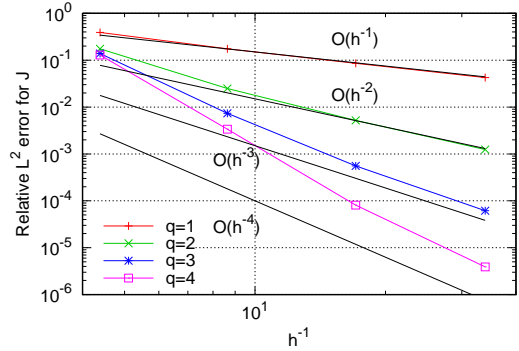}
  \caption{Relative $L^2$ error $E_{\mathrm{rel}}$ for the surface electric current density $\J$ in \textbf{Problem 2}. The result for the surface magnetic current density $\M$ was similar to $\J$. The black lines show the asymptotic rate of convergence. See \autoref{s:num_ex2}.}
  \label{fig:pcoscos_error}
\end{figure}

In the following subsections \ref{s:num_ex2_comptime}--\ref{s:num_ex2_p<q}, we will discuss the present IGBEM with respect to Problem 2 stated in \autoref{s:num_ex2}.

\subsubsection{Computation time}\label{s:num_ex2_comptime}

\autoref{fig:pcoscos_time} plots the computation time\footnote{The computation times were measured on a workstation with total 20 computing cores (CPU: Intel Xeon CPU E5-2687W v3; 3.10GHz). \label{footnote:resource}} against $N$, which slightly varies according to $q$ (recall the definition of $N$ in \autoref{s:disc})\footnote{In the case of the smallest $q$ of 1, $N$ takes the value of 100, 400, 1600 and 6400 for Mesh0, 1, 2 and 3, respectively. Meanwhile, in the case of the largest $q$ of 4, $N$ takes 160, 520, 1840 and 6880. \label{footnote:N}}. Since the computation of the periodic Green's function is the most time-consuming in our implementation, the computation time is proportional to $N^2$ rather than $N^3$, which is required for the LU decomposition.

In addition, a larger $q$ required a longer computation time. However, \autoref{fig:pcoscos_timeerr} indicates that a larger $q$ spent a shorter time for achieving a certain accuracy as far as the the error is sufficiently small, that is, $\lesssim 10^{-3}$. For example, to obtain the error of $10^{-4}$, $q=4$ requires about $10^4$ [\si{\second}]. On the other hand, it is estimated that $q=3$, $2$, and $1$ would need about $3\times 10^4$ [\si{\second}], $10^6$ [\si{\second}], and extremely much more time, respectively. This result is an achievement of the proposed IGBEM. It is significant that the proposed IGBEM provides a high-order approximation for 3D periodic problems. Meanwhile, it would be possible to enhance the conventional high-order RWG and RT basis functions from the non-periodic case to the periodic one according to, for example, Hu et al.~\cite{hu2011}. However, it makes the proposed IGBEM more advantageous than the conventional BEM that the IGBEM can handle a given geometry exactly.

\begin{figure}[H]
  \centering
  \includegraphics[width=.5\textwidth]{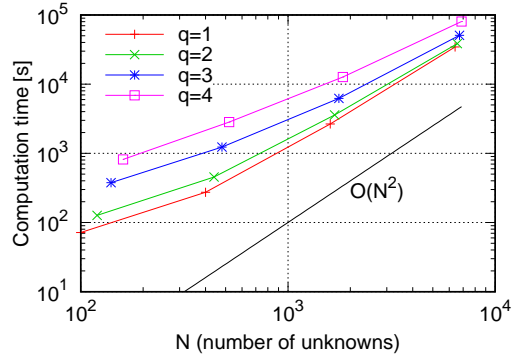}
  \caption{Computation time versus $N$ (i.e. the number of unknowns) in \textbf{Problem 2}. The black line shows the asymptotic rate of computation time. See \autoref{s:num_ex2_comptime}.}
  \label{fig:pcoscos_time}
\end{figure}

\begin{figure}[H]
  \centering
  \includegraphics[width=.5\textwidth]{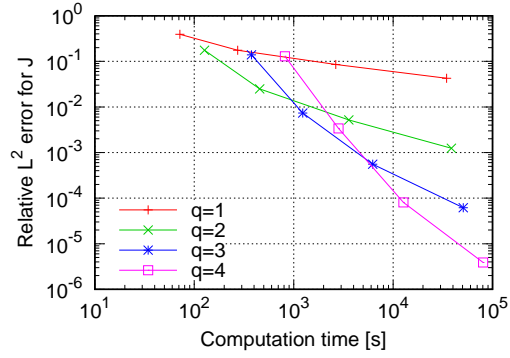}
  \caption{Computation time versus the relative $L^2$ error $E_{\mathrm{rel}}$ for the electric surface current density $\J$ in \textbf{Problem 2}. For each line, the four plots correspond four meshes, i.e. Mesh0 to Mesh3. See \autoref{s:num_ex2_comptime}.}
  \label{fig:pcoscos_timeerr}
\end{figure}

\subsubsection{Comparison of two vector basis functions}\label{s:num_ex2_N_vs_M}

So far, we have used the vector basis function $\Np$ in \autoref{eq:pNdiv} on a certain interface $S$. Here, $\Np$ is quasi-periodic for both the normal and tangential components to $\partial S$. Instead, we performed the IGBEM with $\Mp$ in \autoref{eq:pMdiv}, which is quasi-periodic only for the normal component.

\autoref{fig:NdivMdiverror} compares the relative $L^2$-error $E_{\mathrm{rel}}$ for the both vector basis functions. Clearly, the accuracy of $\Np$ is better than that of $\Mp$ for higher $q$s.

We note that, in the case of $q=1$, $\Mp$ is identical to $\Np$ whenever the surface is smooth everywhere. In addition, $\Mp$ of $q=1$ is equivalent to the rooftop function, i.e. the RT basis functions of the first (lowest) order.

\begin{figure}[H]
  \centering
  \includegraphics[width=.5\textwidth]{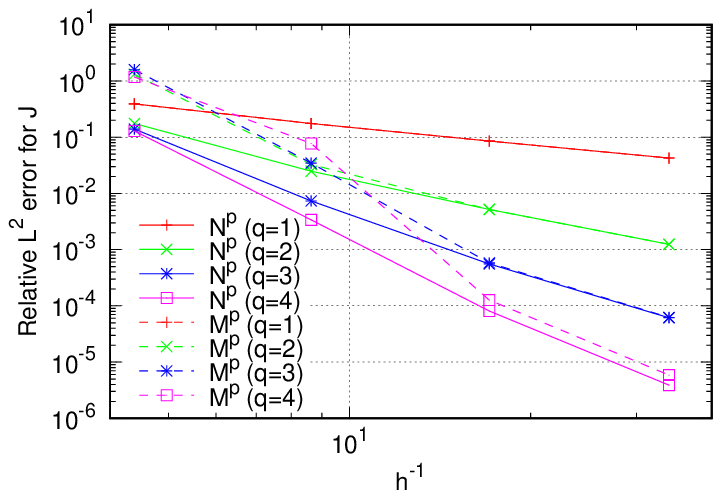}
  \caption{Relative $L^2$ error $E_{\mathrm{rel}}$ for the surface electric current density $\J$ for the vector basis function $\Np$ (solid lines) and $\Mp$ (dotted lines) in \textbf{Problem 2}. The result for the surface magnetic current density $\M$ was similar to that for $\J$. The black lines show the asymptotic rate of convergence. See \autoref{s:num_ex2_N_vs_M}.}
  \label{fig:NdivMdiverror}
\end{figure}

We further investigated the condition number of the system matrix in \autoref{fig:NdivMdivMatrix}.\footnote{We computed the condition number by utilising the LAPACK's routine ZLANGE after computing the inverse of the system matrix by ZGETRI.} The result shows that $\Np$ is more preferable than $\Mp$. As a matter of fact, when we actually solved the linear equations with the (non-restarted) GMRES~\cite{saad1986gmres}, $\Np$ converged faster than $\Mp$.

We can conclude that $\Np$ is superior than $\Mp$. Because any interface $S$ between two layers is smooth everywhere, which is assumed by \autoref{eq:pBcond1}, not only the normal component but also the tangential component (to $\partial S$) of the surface densities $\J$ and $\M$ must be continuous beyond the periodic boundary $S^\p$. Hence, $\Np$ is a more appropriate basis than $\Mp$ to approximate $\J$ and $\M$.

\begin{figure}[H]
  \centering
  \includegraphics[width=.5\textwidth]{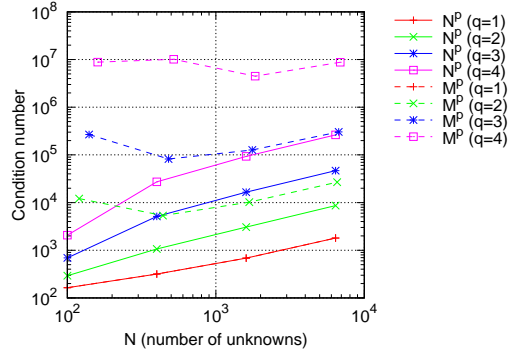}
  \caption{Condition numbers of the system matrix for the vector basis function $\Np$ (solid lines) and $\Mp$ (dotted lines) in \textbf{Problem 2}. See \autoref{s:num_ex2_N_vs_M}.}
  \label{fig:NdivMdivMatrix}
\end{figure}

\subsubsection{Inaccuracy in the case of $p<q$}\label{s:num_ex2_p<q}

In the non-periodic case, Buffa et al.~\cite{buffa2010} points out that the numerical accuracy of $p<q$ can be worse than that of $p\ge q$ theoretically and numerically, where $p$ ($:=p_1=p_2$) and $q$ ($:=q_1=q_2$) denote the (representative) degree of the surface and that of the vector basis function, respectively.\footnote{In addition, Buffa et al.~\cite{buffa2010} shows that the accuracy can be improved by increasing the value of $p$ to $q$ by the degree elevation.}

Following Buffa et al.~\cite{buffa2010}, we investigated the behaviour of our IGBEM when $p<q$. Specifically, we replaced $p=4$ with $2$ or $3$; correspondingly, we replaced $n=9$ with $7$ and $8$, respectively, to maintain the number of B\'ezier elements. The results of $p=2$ and $3$ are shown in \autoref{fig:pcoscos_error_p<q}. We can observe that the asymptotic convergence rate was worse than $O(h^{-q})$ when $p<q$. This result is consistent to Buffa et al.~\cite{buffa2010} for the non-periodic case.

\begin{figure}[H]
  \centering
  \begin{tabular}{cc}
%    \includegraphics[width=.45\textwidth]{../fig_comp_on_s_by_ttaka/fig-coscos-p2-ErrJ.eps}
%    & \includegraphics[width=.45\textwidth]{../fig_comp_on_s_by_ttaka/fig-coscos-p3-ErrJ.eps}
    \includegraphics[width=.45\textwidth]{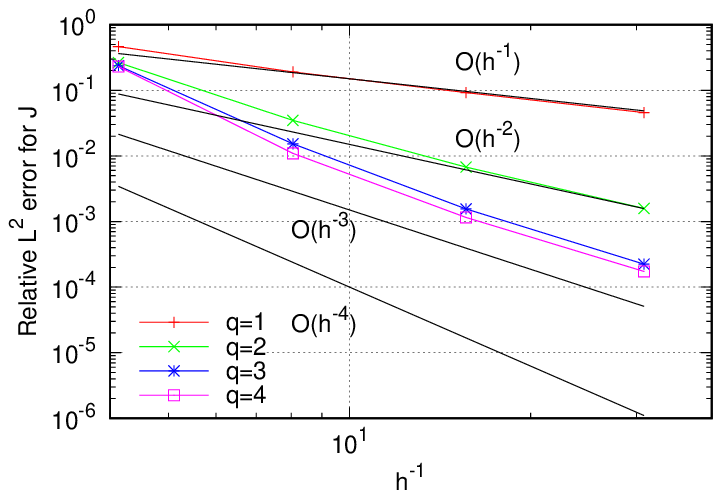}
    & \includegraphics[width=.45\textwidth]{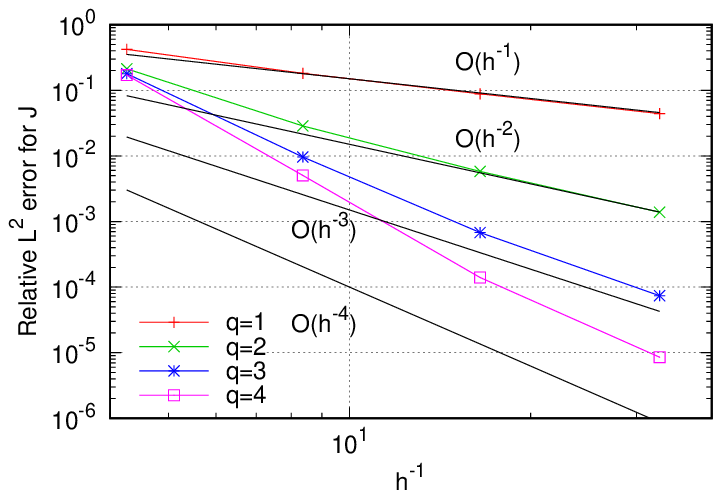}
  \end{tabular}
  \caption{Relative $L^2$ error $E_{\mathrm{rel}}$ for the surface electric current density $\J$ in the case of $p=2$ (left) and $p=3$ (right) in \textbf{Problem 2}. The results for the surface magnetic current density $\M$ was similar to that of $\J$. The black lines show the asymptotic rate of convergence. See \autoref{s:num_ex2_p<q}.}
  \label{fig:pcoscos_error_p<q}
\end{figure}

%%%%%%%%%%%%%%%%%%%%%%%%%%%%%%%%%%%%%%%%%%%%%%%%%%%%%%%%%%
\section{Application to plasmonics} \label{s:plasmonics}
%%%%%%%%%%%%%%%%%%%%%%%%%%%%%%%%%%%%%%%%%%%%%%%%%%%%%%%%%%

To demonstrate the applicability of our IGBEM, we simulated to excite propagating surface plasmons (SPs) on a doubly-periodic metal surface, that is, 2D diffraction grating.

\subsection{Problem setting}\label{s:plasmonics_problem}

Let us consider a layer structure consisting of air and silver. The interface between air and silver is doubly periodic. The unit structure is shown in \autoref{fig:grating_model}. Thus, the silver layer can be regarded as a 2D grating, although its thickness is infinitely large. Specifically, we construct the surface of the unit structure as the tensor product of a certain function $f(\cdot)$, i.e. $\frac{1}{H}f(x_1)f(x_2)$, where the magnitude is adjusted by a constant $H$. Here, the function (profile) $f$ is defined as a piecewise-linear function as shown in \autoref{fig:2Dgrating_model}, where the lengths $L$, $W$, $H$, and the angle $\alpha$ are the free parameters. They are given as $L=0.3$ (which corresponds to the periods $L_1$ and $L_2$), $W=0.1$, $H=0.05$ [\si{\micro\meter}], and $\alpha=70$ [\si{\degree}]. To generate the surface, we followed \autoref{s:surf_pBcurve}. Regarding the parameters of B-spline functions, we used $n_1=n_2=17$ and $p_1=p_2=2$.

In regard to the vector basis function $\Np$, we let the degrees $q_1$ and $q_2$ be 2. The knots are determined according to Algorithm \ref{algo:knots}.

\begin{figure}[H]  
  \centering
  \includegraphics[width=.5\textwidth]{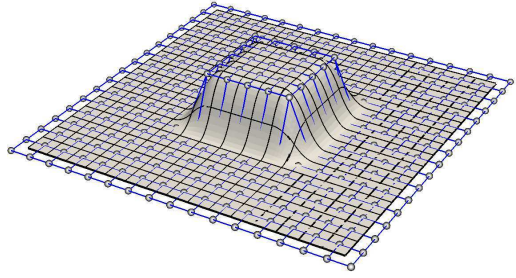}
  \caption{Periodic B-spline surface model for the present 2D grating (Subsection~\ref{s:plasmonics_problem}). Here, the grey points and the blue lines show the control points and the knot lines, respectively.}
  \label{fig:grating_model}
\end{figure}

\begin{figure}[H]
  \centering
  \includegraphics[width=.3\textwidth]{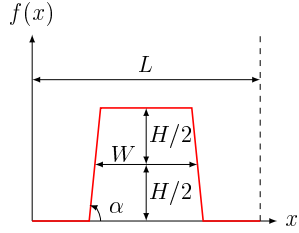}
  \caption{Profile function $f(x)$ and parameters of the present 2D grating.}
  \label{fig:2Dgrating_model}
\end{figure}

Let us consider the incident light of $\theta =\frac{\pi}{6}$ and $\phi = 0$ [\si{\radian}] in \autoref{eq:incident_EH}, where the amplitudes $\bm{a}^\inc$ and $\bm{b}^\inc$ are arbitrary in the following analysis as far as $|\bm{a}^\inc|/|\bm{b}^\inc|=\sqrt{\mu_0/\varepsilon_0}$ is fulfilled. Then, since $\bm{E}^\inc$ and $\bm{H}^\inc$ are parallel to the $x_1x_3$-plane and the $x_2$-axis, respectively, SPs can propagate in the $x_1$ direction. In this case, the dispersion relations of the SPs and that of the diffracted wave of the $m$-th mode (where $m\in\bbbz$) can be described in terms of the $x_1$-component of the wavevector, respectively, as follows~\cite{maier2007}:
\begin{eqnarray*}
  k_1^{\rm \pm SP}(\omega)=\pm\frac{\omega}{c_0}\sqrt{\frac{\varepsilon_{\rm Ag}(\omega)}{1 + \varepsilon_{\rm Ag}(\omega)}},\quad k^m_1(\omega) = \frac{\omega}{c_0}\sin\theta + \frac{2m\pi}{L},
\end{eqnarray*}
where, letting $\varepsilon_0=8.8541878\times 10^{-12}$ [\si{\farad/\meter}] and $\mu_0=1.2566370\times 10^{-6}$ [\si{\henry/\meter}] be the permittivity and the permeability of the vacuum (air), respectively, we denote the light speed in the air by $c_0:=(\varepsilon_0\mu_0)^{-1/2}$. Also, the relative permittivity of silver, denoted by $\varepsilon_{\rm Ag}(\omega)$, is a function of $\lambda_0$ ($\equiv\frac{2\pi c_0}{\omega}$; the wavelength in the vacuum), which is available from \cite{palik1985handbook}. \autoref{fig:dispertion2D} draws the dispersion relations, where only the diffracted light of $m=-1$, $0$, and $1$ are shown. We can see that $k_1^{\rm -SP}$ and $k_1^{-1}$ intersect at $\lambda_0\simeq 0.48$ [\si{\micro\meter}]. This indicates that the incident light of $\lambda_0\simeq 0.48$ [\si{\micro\meter}] can excite the SP of $m=-1$, which propagates in the $-x_1$ direction.

\begin{figure}[H]
  \centering
  \includegraphics[width=.6\textwidth]{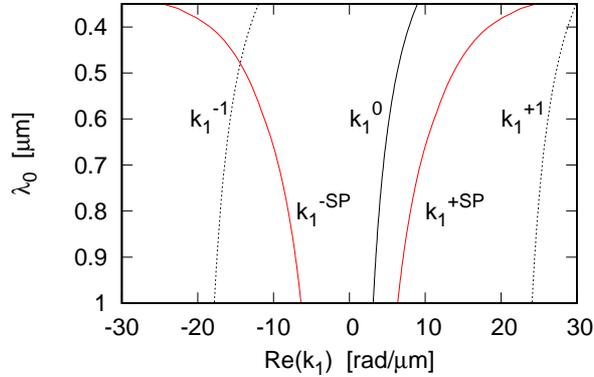}
  \caption{Dispersion relations of the surface plasmons ($k_1^{\pm {\rm SP}}$), the incident light ($k_1^0$), and the diffracted lights ($k_1^{-1,0,+1}$) for air/silver interface in the case of the incident light of $\theta=\frac{\pi}{6}$ and $\phi=0$ [\si{\radian}].}
  \label{fig:dispertion2D}
\end{figure}

\subsection{Numerical results}

We performed the IGBEM for a number of the incident wavelength $\lambda_0$ and computed the energy reflectance. \autoref{fig:R} shows that the energy reflectance has the minimal value at $\lambda_0=0.516$ [\si{\micro\meter}]. This value is close to the aforementioned wavelength where the dispersion curve of the diffracted light of $m=-1$ intersects with that of the backward-propagating plasmon.

In addition, we created a movie\footnote{This is available as the supplementary material of `\texttt{Hxz.gif}'.} of the magnitude of the real part of the dimensionless time-harmonic magnetic field, i.e. $|\bm{\mathcal{H}}(\vx,t)|$ where $\bm{\mathcal{H}}(\vx,t):=\mathrm{Re}\left[\frac{1}{|\bm{b}^\inc|}\bm{H}(\vx)\e^{-\imath\omega t}\right]$, for the $x_1x_3$-plane. In the movie, we can observe a surface wave propagating in the $-x_1$ direction. As a reference, we also created another movie\footnote{This is available as the supplementary material of `\texttt{Hyz.gif}'.} for the $x_2x_3$-plane. In this case, the field oscillates without any phase difference at all the points on the plane.

From these results, we can say that our simulation can excite the surface plasmon on the surface of the 2D grating successfully.

\begin{figure}[H]
  \centering
  \includegraphics[width=.6\textwidth]{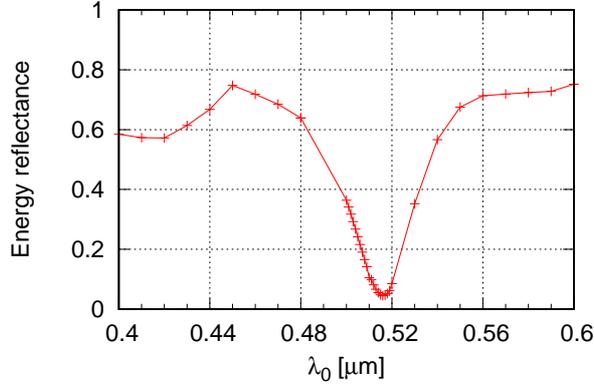}
  \caption{Energy reflectance.}
  \label{fig:R}
\end{figure}

\section{Conclusion}\label{s:conclusion}

We proposed an isogeometric boundary element method (IGBEM) for 3D doubly-periodic layered structures in electromagnetics. First, we expressed each interface between two layers with a rectangular B-spline surface with considering the double periodicity. To this end, we proposed an algorithm to generate a periodic B-spline curve as in Algorithm~\ref{algo:pBcurve}, which is a generalisation of the algorithm by Shimba et al.~\cite{shimba2015}. Second, following the pioneering work by Buffa et al.~\cite{buffa2010} as well as Simpson et al.~\cite{simpson2018} and D\"oltz et al.~\cite{dolz2018} for the non-periodic case, we constructed two types of the vector basis functions based on the B-spline functions (i.e. $\Np$ in \autoref{eq:pNdiv} and $\Mp$ in \autoref{eq:pMdiv}) for the present periodic problems. The construction is the central contribution of this paper. In the numerical analyses, we verified the accuracy of the implemented IGBEM, discussing some optional parameters and settings. Finally, we applied the IGBEM to a plasmonic simulation successfully.

We have some works to do in future. First, we need to accelerate our IGBEM. To this end, we are planning to employ the periodic fast multipole method (FMM) by Otani et al.~\cite{otani2008}, which considers the RWG basis function~\cite{rao1982}. As seen in the reference~\cite{takahashi2012}, if we adjusted the procedures of both creation of the moments and evaluation by the local coefficients for the current B-spline-based discretisation, it would be possible to apply the periodic FMM~\cite{otani2008} to the current IGBEM. However, this is out of the scope of this paper.

Second, we desire to perform the shape optimisation based on our (accelerated) IGBEM. In general, IGBEM is suitable for shape optimisation because changing the shape of a surface to be designed can be performed by modifying the locations of its control points. As a matter of fact, IGBEM has been exploited for shape optimisations (see a short survey in \cite{takahashi2019}). With a shape optimisation system, we are particularly interested in designing the surface-plasmon enhanced photovoltaic devices~\cite{atwater2010,polman2016} in order to realise a high efficient power generation in a very thin cell.

\section*{Acknowledgements}
This work was supported by JSPS KAKENHI Grant Number 18K11335. In addition, we would like to thank all the anonymous referees for their precious comments and suggestions.

\appendix

\section{Evaluation of the periodic Green's function}\label{s:ewald}

We summarise the Ewald's method to compute the periodic Green's function $\Gp_d$ in \autoref{eq:Gp}, where the wavenumber $k_d$ is assumed to be a real number. We will omit the domain index `$d$' from $\Gp_d$ and $k_d$ hereafter.

Following the reference~\cite{arens2010}, we split $\Gp$ int two parts as follows:
\begin{eqnarray*}
  \Gp(\vx-\vy)=G^{\mathrm{p1}}(\vx-\vy)+G^{\mathrm{p2}}(\vx-\vy),
\end{eqnarray*}
where
\begin{eqnarray}
  &&G^{\mathrm{p1}}(\vx-\vy):=\frac{a}{4\pi^{\frac{3}{2}}}\sum_{\bm{\nu}\in\mathbb{Z}^2}\e^{\imath\vk^\inc\cdot\bm{p}^{(\bm{\nu})}}\sum_{j=0}^\infty E_j^{(\bm{\nu})}(\vx-\vy),\label{eq:ewald_Gp1}\\
  &&G^{\mathrm{p2}}(\vx-\vy):=\frac{\imath}{4L_1L_2}\sum_{\bm{\nu}\in\mathbb{Z}^2} F^{(\bm{\nu})}(\vx-\vy).\label{eq:ewald_Gp2}
\end{eqnarray}%
Here, 
\begin{eqnarray*}
  E_j^{(\bm{\nu})}(\vx-\vy)&:=&\frac{1}{j!}\left(\frac{k}{2a}\right)^{2j} (a|\vx-(\vy + \bm{p}^{(\bm{\nu})}) |)^{2j-1}\Gamma\left(\frac{1}{2}-j,\ a^2|\vx - (\vy + \bm{p}^{(\bm{\nu})})|^2\right),\\
  F^{(\bm{\nu})}(\vx-\vy)&:=&\frac{\mathrm{e}^{\imath k \bm{d}^{(\bm{\nu})}\cdot(\vx-\vy)}}{k\rho^{(\bm{\nu})}} \left[\mathrm{e}^{-\imath k \rho^{(\bm{\nu})} (x_3 - y_3)}\mathrm{erfc}\left( - \frac{\imath k\rho^{(\bm{\nu})}}{2a} + (x_3 - y_3)a\right)\right.\\
    &&\left.+\mathrm{e}^{\imath k \rho^{(\bm{\nu})} (x_3 - y_3)}\mathrm{erfc}\left( - \frac{\imath k\rho^{(\bm{\nu})}}{2a} - (x_3 - y_3)a\right)\right],\\
  \bm{d}^{(\bm{\nu})} &:=& \frac{1}{k}\left(\frac{\beta_1 + 2\pi\nu_1}{L_1},\ \frac{\beta_2 + 2\pi\nu_2}{L_2},\ 0\right)^{\mathrm{T}},\quad
  \rho^{(\bm{\nu})} :=
     \begin{cases}
       \sqrt{1-|\bm{d}^{(\bm{\nu})}|^2}, &|\bm{d}^{(\bm{\nu})}|^2 < 1\\
       \imath\sqrt{|\bm{d}^{(\bm{\nu})}|^2 - 1}, &|\bm{d}^{(\bm{\nu})}|^2 > 1
     \end{cases},
\end{eqnarray*}
where $\Gamma$ and $\mathrm{erfc}$ denote the upper incomplete gamma function and the complementary error function, respectively. Also, the parameter $a$ can be chosen so that the computational cost can be minimised for a requested accuracy.

In the actual computation, we need to truncate the infinite serieses in \autoref{eq:ewald_Gp1} and \autoref{eq:ewald_Gp2}. To this end, we introduce the numbers $r_1$, $r_2$, and $n$ and, then, approximate $G^{\rm p1}$ and $ G^{\rm p2}$ as follows:
\begin{eqnarray*}
  &&G^{\rm p1}(\vx-\vy) \approx {G}_{r_1}^{\mathrm{p1}}(\vx-\vy) :=  \frac{a}{4\pi^{\frac{3}{2}}}\sum_{R=0}^{r_1}\ \sum_{\|\bm{\nu}\|_{\infty} = R} \e^{\imath \vk^\inc\cdot\bm{p}^{(\bm{\nu})}} H_n^{(\bm{\nu})}(\vx-\vy),\\
  &&G^{\rm p2}(\vx-\vy) \approx {G}_{r_2}^{\mathrm{p2}}(\vx-\vy) := \frac{\imath}{4L_1L_2}\sum_{R=0}^{r_2}\ \sum_{\|\bm{\nu}\|_{\infty} = R}F^{(\bm{\nu})}(\vx-\vy),
\end{eqnarray*}
where 
\begin{eqnarray*}
  H_n^{(\bm{\nu})}(\vx-\vy) := \sum_{j=0}^{n}E_j^{(\bm{\nu})}(\vx-\vy)
\end{eqnarray*}
and $\|\bm{\nu} \|_\infty := \max(|\nu_1|,|\nu_2|)$. Then, for a given set of $\vx$, $\vy$, and $\bm{\nu}$, we increase the value of $n$ until 
\begin{eqnarray}
  \frac{|{E}_{n+1}^{(\bm{\nu})} - {E}_n^{(\bm{\nu})}|}{|{E}_n^{(\bm{\nu})}|} < \epsilon_{\rm ewald}
  \label{eq:ewald_E}
\end{eqnarray}
is satisfied for a predefined tolerance $\epsilon_{\rm ewald}$. Successively, we increase the values of $r_1$ and $r_2$ until 
\begin{eqnarray}
  \frac{|{G}_{r_1+1}^{\mathrm{p1}} - {G}_{r_1}^{\mathrm{p1}}|}{|{G}_{r_1}^{\mathrm{p1}}|} < \epsilon_{\rm ewald}\quad\text{and}\quad
  \frac{|{G}_{r_2+1}^{\mathrm{p2}} - {G}_{r_2}^{\mathrm{p2}}|}{|{G}_{r_2}^{\mathrm{p2}}|} < \epsilon_{\rm ewald}
  \label{eq:ewald_G}
\end{eqnarray}
are satisfied, respectively.

The evaluation of $\nabla_y \Gp$, which appears in the operator $\mathscr{K}^\p_i$ in \autoref{eq:opK}, is similar to that of $\Gp$. 

%%%%%%%%%%%%%%%%%%%%%%%%%%%%%%%%%%%%%%%%%%%%%%%%%%%%%%%%%%%%
\section{Proofs for Algorithm~\ref{algo:pBcurve}}\label{s:proof_surface}
%%%%%%%%%%%%%%%%%%%%%%%%%%%%%%%%%%%%%%%%%%%%%%%%%%%%%%%%%%%%

This section mathematically justifies \autoref{algo:pBcurve} in \autoref{s:surf_pBcurve}. The algorithm is the direct consequence of Theorem~\ref{theo:pBcurve} below. This theorem will be proven through Lemmas~\ref{theo:translation}--\ref{theo:weak2} and Theorem~\ref{theo:pBcurve_weak}.

From the definition of the B-spline functions in \autoref{eq:CoxdeBoor}, we can show the following translation property:
\begin{lemma}{}
  Denote $t_{i+1}-t_i$ by $\Delta t_i$. If the knots near both ends are equidistant, i.e.
  \begin{eqnarray}
    \Delta t_i=\Delta t_{i+n-p}\quad(i=0,\ldots,2p-1),
    \label{eq:translation_assume}
  \end{eqnarray}
   then a B-spline curve presented by \autoref{eq:Bcurve} has the following translation property:
  \begin{eqnarray}
    \frac{\diff^k B^p_i}{\diff t^k}(t) \equiv \frac{\diff^k B^p_{n-p+i}}{\diff t^k}(t+T)\quad(i=0,\ldots,p-1,\ k=0,\ldots,p-1),
    \label{eq:translation}
  \end{eqnarray}
  where $T:=t_n-t_p$ denotes the length of the parameter domain $[t_p,t_n]$.
  \label{theo:translation}
\end{lemma}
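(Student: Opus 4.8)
The plan is to reduce the whole statement to one geometric fact: hypothesis \eqref{eq:translation_assume} makes the block of knots near the right end an exact copy, translated by $T$, of the block near the left end, after which \eqref{eq:translation} becomes the translation invariance of the Cox--de Boor recursion \eqref{eq:CoxdeBoor}. To make the fact precise I would first pass from the difference condition to absolute knot positions: summing \eqref{eq:translation_assume} over $i=0,\ldots,l-1$ telescopes to
\begin{equation*}
  t_{n-p+l}-t_{n-p}=t_l-t_0\qquad(l=0,\ldots,2p).
\end{equation*}
The case $l=p$ reads $t_n-t_{n-p}=t_p-t_0$, i.e.\ $t_{n-p}-t_0=t_n-t_p=T$; substituting back gives the key identity $t_{n-p+l}=t_l+T$ for $l=0,\ldots,2p$. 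Hence, for $0\le i\le p-1$, the knots $t_{n-p+i},\ldots,t_{n+i+1}$ supporting $B^p_{n-p+i}$ are precisely the knots $t_i,\ldots,t_{i+p+1}$ of $B^p_i$ shifted by $T$.

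Next I would prove by induction on the degree $r=0,1,\ldots,p$ the function-value identity $B^r_i(t)\equiv B^r_{n-p+i}(t+T)$ for every index $i$ with $i+r+1\le 2p$, the bound ensuring that all knots touched by either function lie in the translated block $\{t_0,\ldots,t_{2p}\}$. The base case $r=0$ is immediate from the indicator definition together with $t_{n-p+i}=t_i+T$. For the step I would write \eqref{eq:CoxdeBoor} for both $B^r_i(t)$ and $B^r_{n-p+i}(t+T)$: each affine coefficient such as $\tfrac{t-t_i}{t_{i+r}-t_i}$ is unchanged when the argument and all four knots involved are shifted by the same $T$, while the two lower-degree factors coincide by the induction hypothesis, whose index bound they satisfy (the calls $B^{r-1}_i$ and $B^{r-1}_{i+1}$ need $i+r\le 2p$ and $i+r+1\le 2p$, both implied by $i+r+1\le 2p$). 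Taking $r=p$ and $i=0,\ldots,p-1$ (so $i+p+1\le 2p$) yields the $k=0$ case of \eqref{eq:translation}.

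Finally, since the two sides agree as functions of $t$, applying $\tfrac{\diff^k}{\diff t^k}$ and the chain rule (the inner derivative of $t+T$ being $1$) propagates the identity to every derivative that exists, in particular for $k=0,\ldots,p-1$, which is \eqref{eq:translation}. I expect the only delicate point to be the index bookkeeping in the degree induction: the knot-translation identity is valid solely on $\{0,\ldots,2p\}$, so at each level one must check that the indices $i$, $i+1$, $i+r$, $i+r+1$ never leave this range. Carrying the invariant $i+r+1\le 2p$ through the induction is exactly what secures this, and verifying that both lower-degree recursive calls respect it is the one spot that genuinely needs care.
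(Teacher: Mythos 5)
Your proof is correct. It is, however, genuinely more than what the paper itself provides: the paper states Lemma~\ref{theo:translation} with the remark that it follows ``from the definition of the B-spline functions in \autoref{eq:CoxdeBoor}'', then illustrates it with the example of \autoref{fig:pBsp} ($p=3$, $n=10$), observes that the $k=0$ case visibly holds there, and declares the derivative cases obvious; no inductive argument is ever written down. Your proposal supplies precisely the missing derivation. The telescoping of \autoref{eq:translation_assume} into the absolute identity $t_{n-p+l}=t_l+T$ for $l=0,\ldots,2p$ (with $T=t_n-t_p$ extracted from the case $l=p$) is the right reduction, and the induction on the degree $r$ through the Cox--de Boor recursion, carrying the invariant $i+r+1\le 2p$, is sound: all four knots entering each affine coefficient lie in the translated block, so the coefficients are unchanged, and both lower-degree calls respect the invariant exactly as you check. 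The closing step is also fine, since equality of the two functions of $t$ forces equality of all (possibly one-sided) derivatives wherever they exist, covering $k=0,\ldots,p-1$ even at knots of reduced smoothness; for completeness you might add that when repeated knots make a Cox--de Boor denominator vanish, the usual $0/0:=0$ convention applies identically on both sides because the zero pattern is translation-invariant, so the induction is unaffected. In short, the paper's treatment buys brevity and geometric intuition at the price of rigour, whereas your argument turns the lemma into an actual proved statement with controlled index bookkeeping.
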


\autoref{fig:pBsp} shows an example of the B-spline functions that satisfy \autoref{eq:translation_assume}, where $n=10$ and $p=3$. We can see that the shapes of $B_0^3$, $B_1^3$, and $B_{p-1}^3$ are the same as $B_{n-p}^3$, $B_{n-2}^3$, and $B_{n-1}^3$, respectively. Namely, \autoref{eq:translation} actually holds for $k=0$. Therefore, it is obvious that the first and higher derivatives also satisfy the translation property represented by \autoref{eq:translation}.

\begin{figure}[H]
  \centering
  \includegraphics[width=0.7\textwidth]{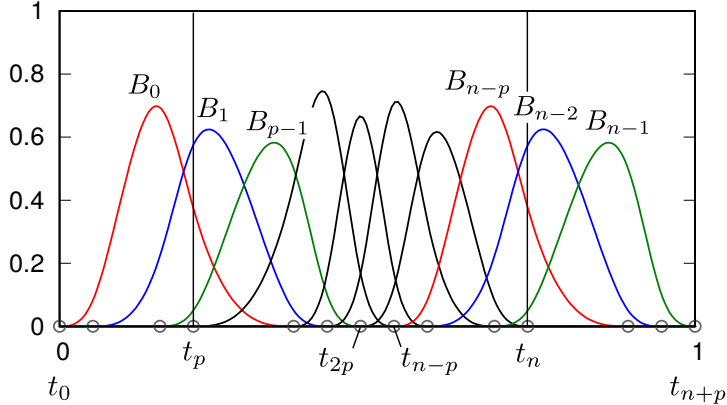}
  \caption{Example of B-spline functions of $p=3$ and $n=10$ with a knot vector $T:=\left\{0, \frac{1}{19}, \frac{3}{19}, \frac{4}{19}, \frac{7}{19}, \frac{8}{19}, \frac{9}{19}, \frac{10}{19}, \frac{11}{19}, \frac{13}{19}, \frac{14}{19}, \frac{17}{19}, \frac{18}{19}, 1 \right\}$, which satisfies \autoref{eq:translation_assume} ($\Leftrightarrow$  \autoref{eq:weak1_assume1} $\Leftrightarrow$ \autoref{eq:pBcond1weak}). The symbol $B_i^3$ is simplified to $B_i$ in this figure.}
  \label{fig:pBsp}
\end{figure}

Using Lemma~\autoref{theo:translation}, we can obtain the following lemma with regard to the $x$-component of control points:
\begin{lemma}
  If
  \begin{subequations}
    \begin{eqnarray}
      &&\Delta t_i=\Delta t_{i+n-p} \quad(i=0,\ldots,2p-1),\label{eq:weak1_assume1}\\
      &&x_i+L=x_{n-p+i}\quad(i=0,\ldots,p-1)\label{eq:weak1_assume2}
    \end{eqnarray}%
  \end{subequations}%
  are given, then a B-spline curve in \autoref{eq:Bcurve} satisfies
  \begin{eqnarray*}
    \frac{\diff^k x}{\diff t^k}(t_p) +\delta_{k0}L = \frac{\diff^k x}{\diff t^k}(t_n)\quad(k=0,\ldots,p-1),
  \end{eqnarray*}
  where $\delta_{k0}=1$ if $k=0$ and $0$ otherwise.
  \label{theo:weak1}
\end{lemma}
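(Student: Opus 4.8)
The plan is to reduce the whole statement to the translation property of Lemma~\ref{theo:translation} together with the partition of unity \autoref{eq:pou}. Writing the $x$-component of the curve in \autoref{eq:Bcurve} as $x(t)=\sum_{i=0}^{n-1}B_i^p(t)\,x_i$, I would differentiate $k$ times and evaluate at the two endpoints, so that the asserted identity becomes a comparison between $\sum_i \frac{\diff^k B_i^p}{\diff t^k}(t_n)\,x_i$ and $\sum_i \frac{\diff^k B_i^p}{\diff t^k}(t_p)\,x_i$.

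The first step is to trim each sum to the basis functions that are actually active at the relevant endpoint. Since the support of $B_i^p$ is $[t_i,t_{i+p+1}]$ and the B-spline is $C^{p-1}$ at the (simple) knots used here, every derivative of order $k\le p-1$ vanishes at both ends of the support. Hence at $t=t_p$ only $B_0^p,\ldots,B_{p-1}^p$ contribute, and at $t=t_n$ only $B_{n-p}^p,\ldots,B_{n-1}^p$ contribute, giving
\[
\frac{\diff^k x}{\diff t^k}(t_p)=\sum_{i=0}^{p-1}\frac{\diff^k B_i^p}{\diff t^k}(t_p)\,x_i,\qquad
\frac{\diff^k x}{\diff t^k}(t_n)=\sum_{i=0}^{p-1}\frac{\diff^k B_{n-p+i}^p}{\diff t^k}(t_n)\,x_{n-p+i}.
\]

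Next I would invoke Lemma~\ref{theo:translation}, whose hypothesis is exactly \autoref{eq:weak1_assume1}: setting $t=t_p$ in \autoref{eq:translation} and recalling $T=t_n-t_p$ yields $\frac{\diff^k B_{n-p+i}^p}{\diff t^k}(t_n)=\frac{\diff^k B_i^p}{\diff t^k}(t_p)$ for $i,k=0,\ldots,p-1$. Substituting this, and then the control-point hypothesis \autoref{eq:weak1_assume2} in the form $x_{n-p+i}=x_i+L$, splits the $t_n$-endpoint sum into $\sum_{i=0}^{p-1}\frac{\diff^k B_i^p}{\diff t^k}(t_p)\,x_i + L\sum_{i=0}^{p-1}\frac{\diff^k B_i^p}{\diff t^k}(t_p)$, whose first piece is precisely $\frac{\diff^k x}{\diff t^k}(t_p)$.

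It then remains to evaluate the factor $\sum_{i=0}^{p-1}\frac{\diff^k B_i^p}{\diff t^k}(t_p)$. Because only those $p$ functions are active at $t_p$, this equals the full sum $\sum_{i=0}^{n-1}\frac{\diff^k B_i^p}{\diff t^k}(t_p)$, which by differentiating \autoref{eq:pou} $k$ times is $1$ for $k=0$ and $0$ for $k\ge 1$, i.e.\ $\delta_{k0}$; this produces the extra term $\delta_{k0}L$ and closes the argument. The one place needing genuine care will be the first step — rigorously justifying that the inactive basis functions and their derivatives up to order $p-1$ vanish at $t_p$ and $t_n$ — since this rests on the support length $p+1$ and the $C^{p-1}$ smoothness of the B-splines; everything after that reduction is bookkeeping.
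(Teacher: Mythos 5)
Your proposal is correct and follows essentially the same route as the paper's proof: trim each endpoint sum to the $p$ active basis functions, transfer one endpoint to the other via Lemma~\ref{theo:translation} under hypothesis \autoref{eq:weak1_assume1}, substitute $x_{n-p+i}=x_i+L$, and evaluate the leftover factor by differentiating the partition of unity \autoref{eq:pou} to get $\delta_{k0}$. The only (cosmetic) difference is direction — the paper rewrites the $t_p$ sum into the $t_n$ sum, whereas you compare both endpoint sums — and your explicit justification of the trimming step via support length and $C^{p-1}$ smoothness is, if anything, slightly more careful than the paper's.
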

\def\mybecause#1{\quad(\text{$\because$ #1})}
\begin{proof}
\begin{eqnarray*}
  \frac{\diff^k x}{\diff t^k}(t_p)
  &=& \sum_{i=0}^{n-1}\frac{\diff^k B_i^p}{\diff t^k}(t_p)x_i \mybecause{Eq.~\autoref{eq:Bcurve}}\\
  &=& \sum_{i=0}^{p-1}\frac{\diff^k B_i^p}{\diff t^k}(t_p)x_i \mybecause{Non-zero functions at $t=t_p$ are $B_0^p,\ldots,B_{p-1}^p$}\\
  &=& \sum_{j=n-p}^{n-1}\frac{\diff^k B_{j-(n-p)}^p}{\diff t^k}(t_p)x_{j-(n-p)} \mybecause{$j:=n-p+i$}\\
  &=& \sum_{j=n-p}^{n-1}\frac{\diff^k B_{j-(n-p)+(n-p)}^p}{\diff t^k}(t_p+T)(x_{j-(n-p)+(n-p)}-L) \mybecause{Lemma~\ref{theo:translation} and Eq.~\autoref{eq:weak1_assume2}}\\
  &=& \sum_{j=0}^{n-1}\frac{\diff^k B_{j}^p}{\diff t^k}(t_n)x_{j}-L\sum_{j=0}^{n-1}\frac{\diff^k B_{j}^p}{\diff t}(t_n) \mybecause{Non-zero functions at $t=t_n$ are $B_{n-p}^p,\ldots,B_{n-1}^p$}\\
  &=& \frac{\diff^k x}{\diff t^k}(t_n)-L\delta_{k0} \mybecause{Eq.~\autoref{eq:Bcurve} and the partition of unity in Eq.~\autoref{eq:pou}}
\end{eqnarray*}
\end{proof}

Similarly, we have the following result regarding the $y$-component of control points:
\begin{lemma}
  If
  \begin{subequations}
    \begin{eqnarray*}
      &&\Delta t_i=\Delta t_{i+n-p} \quad(i=0,\ldots,2p-1),\label{eq:weak2_assume1}\\
      &&y_i=y_{n-p+i}\quad(i=0,\ldots,p-1)\label{eq:weak2_assume2}
    \end{eqnarray*}%
  \end{subequations}%
  are given, then a B-spline curve in \autoref{eq:Bcurve} satisfies
  \begin{eqnarray*}
    \frac{\diff^k y}{\diff t^k}(t_p) = \frac{\diff^k y}{\diff t^k}(t_n)\quad(k=0,\ldots,p-1).
  \end{eqnarray*}
  \label{theo:weak2}
\end{lemma}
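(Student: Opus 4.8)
The plan is to mirror the argument already carried out for Lemma~\ref{theo:weak1}, replacing the $x$-component by the $y$-component and simply dropping the period shift. First I would start from the defining expansion \autoref{eq:Bcurve}, which gives $\frac{\diff^k y}{\diff t^k}(t_p)=\sum_{i=0}^{n-1}\frac{\diff^k B_i^p}{\diff t^k}(t_p)\,y_i$ for each $k=0,\ldots,p-1$. Since the support of $B_i^p$ is $[t_i,t_{i+p+1}]$, the only basis functions that are non-zero at the left endpoint $t=t_p$ are $B_0^p,\ldots,B_{p-1}^p$, so the sum collapses to $\sum_{i=0}^{p-1}\frac{\diff^k B_i^p}{\diff t^k}(t_p)\,y_i$.

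Next I would re-index by setting $j:=n-p+i$ and invoke the translation property of Lemma~\ref{theo:translation}, which is available because the first hypothesis $\Delta t_i=\Delta t_{i+n-p}$ is exactly \autoref{eq:translation_assume}. That identity rewrites $\frac{\diff^k B_i^p}{\diff t^k}(t_p)$ as $\frac{\diff^k B_{n-p+i}^p}{\diff t^k}(t_p+T)=\frac{\diff^k B_{n-p+i}^p}{\diff t^k}(t_n)$, using $T=t_n-t_p$. The second hypothesis $y_i=y_{n-p+i}$ then replaces each $y_i$ by $y_{n-p+i}$ without introducing any correction term. Because the only basis functions non-zero at $t=t_n$ are $B_{n-p}^p,\ldots,B_{n-1}^p$, the resulting sum over $j=n-p,\ldots,n-1$ is precisely the full sum $\sum_{j=0}^{n-1}\frac{\diff^k B_j^p}{\diff t^k}(t_n)\,y_j$, which by \autoref{eq:Bcurve} equals $\frac{\diff^k y}{\diff t^k}(t_n)$.

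I do not expect a genuine obstacle here, since the structure is identical to the proof of Lemma~\ref{theo:weak1}; the only substantive difference is that the hypothesis $y_i=y_{n-p+i}$ carries no period offset, so the partition-of-unity term that produced the $\delta_{k0}L$ correction in Lemma~\ref{theo:weak1} simply does not appear. The single point I would state carefully is the case $k=0$: in Lemma~\ref{theo:weak1} this is exactly where the $L$-shift surfaced, through $\sum_{j}B_j^p(t_n)\equiv 1$ from the partition of unity in \autoref{eq:pou}, whereas here the equality $\frac{\diff^0 y}{\diff t^0}(t_p)=\frac{\diff^0 y}{\diff t^0}(t_n)$ holds with no extra term precisely because the prescribed $y$-values are periodic rather than quasi-periodic.
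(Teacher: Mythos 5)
Your proof is correct and takes essentially the same route as the paper: the paper's own proof of this lemma consists of the single remark that one may replace $x$ with $y$ and set $L=0$ in the proof of Lemma~\ref{theo:weak1}, which is precisely the argument you have written out explicitly (collapse by supports, translation via Lemma~\ref{theo:translation}, re-indexing, and the disappearance of the $\delta_{k0}L$ correction because the $y$-values are strictly periodic). No gaps.
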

\begin{proof}
  We may replace $x$ with $y$ and let $L$ be zero in the proof of Lemma~\ref{theo:weak1}.
\end{proof}

We can obtain the following theorem directly from Lemmas~\ref{theo:weak1} and \ref{theo:weak2}:
\begin{theorem}
  If the conditions
  \begin{subequations}
    \begin{eqnarray}
      &&\Delta t_i=\Delta t_{i+n-p} \quad(i=0,\ldots,2p-1),\label{eq:pBcond1weak}\\
      &&x_i+L=x_{n-p+i}\quad(i=0,\ldots,p-1),\label{eq:pBcond2weak}\\
      &&y_i=y_{n-p+i} \quad(i=0,\ldots,p-1)\label{eq:pBcond3weak}
    \end{eqnarray}%
  \end{subequations}%
  are given, then a B-spline curve in \autoref{eq:Bcurve} satisfies
  \begin{subequations}
    \begin{eqnarray}
      &&x(t_p) + L = x(t_n), \label{eq:pBprop1weak}\\
      &&\frac{\diff^k y}{\diff t^k}(t_p) = \frac{\diff^k y}{\diff t^k}(t_n)\quad(k=0,\ldots,p-1).\label{eq:pBprop2weak}
    \end{eqnarray}
  \end{subequations}%
  \label{theo:pBcurve_weak}%
\end{theorem}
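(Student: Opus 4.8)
The plan is to obtain the theorem as an immediate corollary of Lemmas~\ref{theo:weak1} and \ref{theo:weak2}, which have already isolated the behaviour of the $x$- and $y$-components at the two ends $t_p$ and $t_n$ separately. The only thing I need to check is that the three hypotheses \autoref{eq:pBcond1weak}--\autoref{eq:pBcond3weak} distribute correctly onto the hypotheses of those two lemmas. Note that the equidistant-knot condition \autoref{eq:pBcond1weak} (which is exactly \autoref{eq:translation_assume}) is shared by both lemmas, while the control-point conditions \autoref{eq:pBcond2weak} and \autoref{eq:pBcond3weak} feed the $x$- and $y$-lemmas respectively.

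For the horizontal conclusion \autoref{eq:pBprop1weak}, I would invoke Lemma~\ref{theo:weak1}, whose two hypotheses are precisely \autoref{eq:pBcond1weak} and \autoref{eq:pBcond2weak}. Its conclusion reads
\begin{eqnarray*}
  \frac{\diff^k x}{\diff t^k}(t_p)+\delta_{k0}L=\frac{\diff^k x}{\diff t^k}(t_n)\quad(k=0,\ldots,p-1).
\end{eqnarray*}
Specialising to $k=0$, so that $\delta_{00}=1$, gives $x(t_p)+L=x(t_n)$, which is exactly \autoref{eq:pBprop1weak}.

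For the vertical conclusion \autoref{eq:pBprop2weak}, I would invoke Lemma~\ref{theo:weak2}, whose hypotheses are precisely \autoref{eq:pBcond1weak} and \autoref{eq:pBcond3weak}. Its conclusion, $\frac{\diff^k y}{\diff t^k}(t_p)=\frac{\diff^k y}{\diff t^k}(t_n)$ for $k=0,\ldots,p-1$, is identical to \autoref{eq:pBprop2weak}, so nothing further is needed.

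Since both lemmas have already been established, there is no genuine obstacle at this step: the substantive work was done upstream, namely the translation property of the B-spline basis (Lemma~\ref{theo:translation}) together with the re-indexing $j:=n-p+i$ and the partition of unity \autoref{eq:pou} that kill the spurious derivative sum when $k\ge 1$. The one point I would double-check for a clean write-up is the consistency of the index ranges: both lemmas require \autoref{eq:pBcond1weak} over $i=0,\ldots,2p-1$ and their respective control-point conditions over $i=0,\ldots,p-1$, which matches the ranges stated in \autoref{eq:pBcond2weak} and \autoref{eq:pBcond3weak} exactly. Hence the proof is a two-line assembly of the preceding lemmas.
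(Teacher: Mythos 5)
Your proposal is correct and is exactly the paper's own argument: the paper states that the theorem follows ``directly from Lemmas~\ref{theo:weak1} and \ref{theo:weak2}'', which is precisely your two-line assembly (Lemma~\ref{theo:weak1} with $k=0$ for the $x$-component, Lemma~\ref{theo:weak2} verbatim for the $y$-component). Your check of the hypothesis index ranges and the $\delta_{k0}$ specialisation matches the intended reasoning, so nothing is missing.
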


By modifying Theorem~\ref{theo:pBcurve_weak}, we can obtain the following theorem:
\begin{theorem}
  If
  \begin{subequations}
    \begin{eqnarray}
      && t_i:=\frac{i}{n+p}\quad(i=0,\ldots,n+p),\label{eq:pBcond1}\\
      && x_i:=x_0+i H\quad(i=0,\ldots,n-1),\label{eq:pBcond2}\\
      && y_i=y_{n-p+i}\quad(i=0,\ldots,p-1)\label{eq:pBcond3}
    \end{eqnarray}%
    \label{eq:pBcond}%
  \end{subequations}
  are given, then a B-spline curve in \autoref{eq:Bcurve} satisfies
  \begin{subequations}
    \begin{eqnarray}
      &&x(t_p)=-\frac{L}{2},\quad x(t_n)=\frac{L}{2},\label{eq:pBprop1}\\
      &&\frac{\diff^k y}{\diff t^k}(t_p)=\frac{\diff^k y}{\diff t^k}(t_n)\quad(k=0,\ldots,p-1).\label{eq:pBprop2}
    \end{eqnarray}
  \end{subequations}
  % 
  % In particluar, 
  % \begin{eqnarray}
  %   \frac{\diff y}{\diff x}(t_p)=\frac{\diff y}{\diff x}(t_n)
  % \end{eqnarray}
  % holds because of $\frac{\diff y}{\diff x}(t)=\frac{\diff y}{\diff t}(t)/\frac{\diff x}{\diff t}(t)$
  % 
  \label{theo:pBcurve}
\end{theorem}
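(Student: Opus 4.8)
The plan is to deduce Theorem~\ref{theo:pBcurve} from the already-established Theorem~\ref{theo:pBcurve_weak} and then to pin down the two specific endpoint values by exploiting the linear precision of the B-spline basis. First I would check that the hypotheses \autoref{eq:pBcond1}--\autoref{eq:pBcond3} imply those of Theorem~\ref{theo:pBcurve_weak}. The uniform knots \autoref{eq:pBcond1} give $\Delta t_i = 1/(n+p)$ for every $i$, so the equidistance condition \autoref{eq:pBcond1weak} holds trivially. The equidistant abscissae \autoref{eq:pBcond2} satisfy $x_{n-p+i}-x_i=(n-p)H$, so taking the spacing $H=L/(n-p)$ (as prescribed by \autoref{eq:pBcurve_cond3}) yields $x_i+L=x_{n-p+i}$, which is \autoref{eq:pBcond2weak}; and \autoref{eq:pBcond3} is verbatim \autoref{eq:pBcond3weak}. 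Hence Theorem~\ref{theo:pBcurve_weak} applies and immediately delivers the periodicity of the $y$-derivatives \autoref{eq:pBprop2} together with the relation $x(t_n)=x(t_p)+L$.

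It then remains only to compute $x(t_p)$ explicitly; the value $x(t_n)=L/2$ follows from $x(t_n)=x(t_p)+L$ once $x(t_p)=-L/2$ is shown. For this second step I would invoke the linear precision property: on the interior interval $[t_p,t_n]$ the degree-$p$ B-splines reproduce affine functions through the Greville (knot-average) abscissae $\xi_i:=\frac{1}{p}\sum_{j=1}^{p}t_{i+j}$, namely $\sum_i \xi_i B_i^p(t)\equiv t$, which together with the partition of unity \autoref{eq:pou} holds on $[t_p,t_n]$. For the uniform knots \autoref{eq:pBcond1} a short calculation gives $\xi_i=\frac{i}{n+p}+\frac{p+1}{2(n+p)}$, an affine function of $i$. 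Since the control abscissae $x_i=x_0+iH$ are also affine in $i$, I can write $x_i=A\xi_i+B$ with $A=H(n+p)$ and $B=x_0-\tfrac{H(p+1)}{2}$; linearity, partition of unity, and linear precision then force $x(t)=At+B$ for all $t\in[t_p,t_n]$. Evaluating this affine form at $t_p=p/(n+p)$ and $t_n=n/(n+p)$ and inserting $H=L/(n-p)$ and $x_0=-\frac{L}{2}-\frac{L(p-1)}{2(n-p)}$ from \autoref{eq:pBcurve_cond3} gives $x(t_p)=-L/2$ and $x(t_n)=L/2$, i.e.\ \autoref{eq:pBprop1}.

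The main obstacle is the exact evaluation of $x(t_p)$, since everything else is a bookkeeping reduction to the weak theorem. The cleanest way past it is the linear precision identity above, which turns the endpoint evaluation into substituting two knot values into a single affine function and avoids computing the individual boundary values $B_i^p(t_p)$; if one preferred not to quote linear precision, the alternative is to evaluate $x(t_p)=\sum_{i=0}^{p-1}B_i^p(t_p)x_i$ directly, which still reduces to computing $\sum_{i=0}^{p-1} iB_i^p(t_p)$, precisely the quantity that the Greville-abscissa argument supplies in closed form. I would also note in passing that the resulting affine form of $x$ on $[t_p,t_n]$ reproves the second item of Remark~\ref{theo:pBcurve_prop}, since the tangent has strictly positive $x$-component ($A=H(n+p)>0$).
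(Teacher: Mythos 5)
Your proposal is correct, and it reaches the conclusion by a genuinely different route than the paper for the key step. The reduction of \autoref{eq:pBprop2} to Theorem~\ref{theo:pBcurve_weak} (via Lemmas~\ref{theo:translation}--\ref{theo:weak2}) is the same in both arguments, as is the observation that the hypotheses only pin things down once the Algorithm~\ref{algo:pBcurve} values $H=L/(n-p)$ and $x_0=-\frac{L}{2}-\frac{L(p-1)}{2(n-p)}$ are inserted (the theorem as literally stated, with arbitrary $x_0$ and $H$, cannot force $x(t_p)=-L/2$; the paper resolves this by \emph{deriving} those values from the endpoint conditions as the unique solution of a $2\times 2$ linear system, whereas you \emph{verify} that the Algorithm's values work --- equivalent content, opposite directions). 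Where you genuinely diverge is the evaluation of $x(t_p)$ and $x(t_n)$: the paper restricts the sums to the supported B-splines, introduces the constant $C_p:=\sum_{i=0}^{p-1}iB_i^p(t_p)$, relates the two ends through the translation property of Lemma~\ref{theo:translation}, and then needs the separate combinatorial fact $C_p=\frac{p-1}{2}$, proved by induction on $p$ in Lemma~\ref{theo:C_p}. You instead invoke linear precision through the Greville abscissae $\xi_i=\frac{1}{p}\sum_{j=1}^{p}t_{i+j}$, note that for uniform knots $\xi_i$ is affine in $i$ so that the affine control abscissae give $x(t)=At+B$ on all of $[t_p,t_n]$, and then simply evaluate at the two knots; your arithmetic checks out ($x(t_p)=x_0+H\frac{p-1}{2}=-\frac{L}{2}$ and $x(t_n)=x(t_p)+H(n-p)=\frac{L}{2}$). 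Your route buys three things: it bypasses the inductive Lemma~\ref{theo:C_p} entirely (indeed, evaluating the linear-precision identity at $t=t_p$ \emph{implies} $C_p=\frac{p-1}{2}$ in one line), it yields the affine form of $x$ on the whole parameter domain, which reproves the second item of Remark~\ref{theo:pBcurve_prop} for free, and it is shorter. What it costs is self-containedness: linear precision is a standard B-spline fact (available in the cited NURBS literature) but is neither stated nor proved in the paper, so a referee would ask you to cite or prove it; the paper's argument, by contrast, uses only the Cox--de Boor definition, the partition of unity, and its own lemmas, and additionally exhibits the uniqueness of $(x_0,H)$, i.e.\ it explains where the formulas of \autoref{eq:pBcurve_cond3} come from rather than taking them as given.
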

\begin{proof}
  Eq.~\autoref{eq:pBcond2} is a sufficient condition for \autoref{eq:pBcond2weak}, while \autoref{eq:pBcond3} is identical to \autoref{eq:pBcond3weak}. Therefore, \autoref{eq:pBprop2} follows from \autoref{eq:pBprop2weak} of Theorem~\ref{theo:pBcurve_weak}. So, we may show that the remaining equations in \autoref{eq:pBprop1} hold.
% from the assumptions \autoref{eq:pBcond1}--\autoref{eq:pBcond3}. 
To this end, we first write down \autoref{eq:pBprop1} as follows:
  \begin{eqnarray*}
    &&x(t_p)=\sum_{i=0}^{n-1}B_i^p(t_p)x_i=\sum_{i=0}^{p-1}B_i^p(t_p)x_i=-\frac{L}{2},\\
    && x(t_n)=\sum_{i=0}^{n-1}B_i^p(t_n)x_i=\sum_{i=n-p}^{n-1}B_i^p(t_n)x_i=\frac{L}{2},
  \end{eqnarray*}
  where the supports of B-spline functions were considered. By substituting \eqref{eq:pBcond2} into the above equations, we have
  \begin{eqnarray}
    x_0+C_pH=-\frac{L}{2},\quad x_0+(C_p+n-p)H=\frac{L}{2},
    \label{eq:x0_hx}
  \end{eqnarray}
  where we used the partition of unity, i.e. $\sum_{i=0}^{p-1}B_i^p(t_p)=1$.
  Also, we defined a constant $C_p:=\sum_{i=0}^{p-1}B_i^p(t_p)i$, which depends on $p$ only, and used the fact that $\sum_{j=n-p}^{n-1} B_{j}^p(t_n)j = C_p + (n-p)$.\footnote{Since Lemma~\ref{theo:translation} is available, we can rewrite $C_p$ as
    \begin{eqnarray*}
      C_p=\sum_{i=0}^{p-1}B_{i+n-p}^p(t_p+t_n-t_p)i=\sum_{j=n-p}^{n-1} B_{j}^p(t_n)(j-(n-p))=\sum_{j=n-p}^{n-1} B_{j}^p(t_n)j-(n-p).
    \end{eqnarray*}
  } The linear equations in \autoref{eq:x0_hx} have the unique solutions if $n\ne p$, which is assumed in \autoref{eq:n>p}. Specifically, the solutions are given by
  \begin{eqnarray}
    x_0=-\frac{L}{2}-\frac{LC_p}{n-p}=-\frac{L}{2}-\frac{L(p-1)}{2(n-p)},\quad
    H=\frac{L}{n-p}.
    \label{eq:x0_hx_solution}
  \end{eqnarray}
  Here, we exploited $C_p=\frac{p-1}{2}$, which is proven as Lemma~\ref{theo:C_p} in \ref{s:proof_C_p}.
\end{proof}

\iffalse % unnecessary but for check
Here, since the knots are equidistant, the values of $C_p$ can be computed as, for example,
\begin{eqnarray*}
  &&C_1=B_0^1(t_1)\cdot 0=0,\\
  &&C_2=B_0^2(t_2)\cdot 0+B_1^2(t_2)\cdot 1=0+1\cdot \frac{1}{2}=\frac{1}{2},\\
  &&C_3=B_0^3(t_3)\cdot 0+B_1^3(t_3)\cdot 1+B_2^3(t_3)\cdot 2=0+\frac{2}{3}\cdot 1+\frac{1}{6}\cdot 2=1,\\
  &&C_4=B_0^4(t_4)\cdot 0+B_1^4(t_4)\cdot 1+B_2^4(t_4)\cdot 2+B_3^4(t_4)\cdot 3=0+\frac{11}{24}\cdot 1+\frac{11}{24}\cdot 2+\frac{1}{24}\cdot 3=\frac{3}{2}.\\
\end{eqnarray*}
\fi

%%%%%%%%%%%%%%%%%%%%%%%%%%%%%%%%%%%%%%%%%%%%%%%%%%%%%%%%%%%%
\section{The value of $C_p$}\label{s:proof_C_p}
%%%%%%%%%%%%%%%%%%%%%%%%%%%%%%%%%%%%%%%%%%%%%%%%%%%%%%%%%%%%

We have the following formula:
\begin{lemma}
  If the knots are uniform, a set of $n$ B-spline functions of degree $p$ (i.e. $B_0^p,\ldots,B_{n-1}^p$) satisfies
  \begin{eqnarray}
    C_p:=\sum_{i=0}^{p-1}B_i^p(t_p)i=\frac{p-1}{2}.
    \label{eq:C_p}
  \end{eqnarray}
  \label{theo:C_p}
\end{lemma}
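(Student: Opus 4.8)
The plan is to pin down $C_p$ using two linear relations among the numbers $b_i := B_i^p(t_p)$, $i=0,\ldots,p-1$. First I would note that, by the support property $\mathrm{supp}\,B_i^p=[t_i,t_{i+p+1}]$ recorded after \autoref{eq:CoxdeBoor}, the only basis functions that do not vanish at $t=t_p$ are $B_0^p,\ldots,B_{p-1}^p$ (with simple uniform knots $B_p^p(t_p)=0$), so the sum in \autoref{eq:C_p} really runs over $i=0,\ldots,p-1$ and the partition of unity \autoref{eq:pou} supplies the first relation $\sum_{i=0}^{p-1} b_i = 1$. Since $C_p=\sum_{i=0}^{p-1} i\,b_i$ is a different weighted sum, one more independent relation is needed.

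For the second relation I would exploit the reflection symmetry of a uniform B-spline basis: because the uniform knot vector is palindromic about its midpoint (equivalently, each $B_i^p$ is the integer translate $B_i^p(t)=N_p((t-t_0)/h-i)$ of the single cardinal B-spline $N_p$ supported on $[0,p+1]$, which satisfies $N_p(x)=N_p(p+1-x)$), one gets
\begin{equation*}
  b_i = B_i^p(t_p) = N_p(p-i) = N_p(i+1) = B_{p-1-i}^p(t_p) = b_{p-1-i}\qquad(i=0,\ldots,p-1).
\end{equation*}
With this symmetry the result is immediate by the standard doubling trick: reindex $C_p$ via $i\mapsto p-1-i$, add it to itself, and use $\sum_i b_i=1$,
\begin{equation*}
  2C_p=\sum_{i=0}^{p-1}\bigl(i+(p-1-i)\bigr)b_i=(p-1)\sum_{i=0}^{p-1}b_i=p-1,
\end{equation*}
hence $C_p=\tfrac{p-1}{2}$.

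A self-contained alternative avoids the symmetry and uses instead the linear-precision property of B-splines, $\sum_i \xi_i B_i^p(t)\equiv t$ with Greville abscissae $\xi_i=\tfrac1p\sum_{k=1}^{p}t_{i+k}$ (see, e.g., \cite{piegl2012}). For uniform knots $\xi_i=t_0+h\bigl(i+\tfrac{p+1}{2}\bigr)$; evaluating the identity at $t=t_p=t_0+ph$ and cancelling the $\sum_i b_i=1$ term yields $C_p+\tfrac{p+1}{2}=p$, i.e. the same conclusion.

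The one genuinely non-trivial ingredient — and the step I expect to be the main obstacle — is justifying whichever second identity I use, since the partition of unity (the only such relation available from the excerpt) is not by itself enough to fix the weighted average. Both the cardinal-spline symmetry $N_p(x)=N_p(p+1-x)$ and the linear-precision identity hold precisely because the knots are uniform, and each is proved by a short induction on $p$ through the Cox--de Boor recursion \autoref{eq:CoxdeBoor} (the recursion is compatible with the reflection $x\mapsto p+1-x$ in the first case and reproduces linear polynomials in the second); alternatively either may simply be cited. The translation property already established in Lemma~\ref{theo:translation} is of the same flavour and would guide such an induction.
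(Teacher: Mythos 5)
Your proof is correct, but it takes a genuinely different route from the paper's. The paper proves the lemma by induction on $p$ applied directly to the quantity $C_p$: it expands $C_{p+1}=\sum_i B_i^{p+1}(t_{p+1})\,i$ through the Cox--de Boor recursion \autoref{eq:CoxdeBoor}, shifts indices using the translation (cardinality) property of uniform B-splines, and reduces everything to the inductive hypothesis and the partition of unity \autoref{eq:pou} via four auxiliary sums $T_1,\ldots,T_4$. You instead pin down the weighted sum by a structural fact: writing $b_i:=B_i^p(t_p)=N_p(p-i)$ with $N_p$ the cardinal B-spline, the reflection symmetry $N_p(x)=N_p(p+1-x)$ gives $b_i=b_{p-1-i}$, and the doubling trick $2C_p=(p-1)\sum_i b_i=p-1$ finishes; your Greville/linear-precision variant is equally valid, and both computations check out (including the support argument that restricts the sum to $i=0,\ldots,p-1$). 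In effect you relocate the induction from $C_p$ itself into a reusable, citable property of uniform B-splines. What your route buys is brevity and an explanation of \emph{why} the answer is $\frac{p-1}{2}$: it is the mean of a weight distribution symmetric about $\frac{p-1}{2}$, and the symmetry yields all the values $b_i=b_{p-1-i}$, not merely their first moment. What the paper's route buys is self-containedness: it uses only ingredients already established in the manuscript (the recursion, the partition of unity, and the Lemma~\ref{theo:translation}-type translation property), so no external fact needs to be imported. Two minor points: at the level of rigor of the paper you should either cite the symmetry $N_p(x)=N_p(p+1-x)$ (e.g.\ to \cite{piegl2012}) or include the short induction you sketch, since partition of unity alone does not imply it --- though you correctly flagged exactly this as the one non-trivial ingredient; and both your argument and the lemma itself implicitly require $p\ge 1$, matching the paper's base case $C_1=0$.
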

\begin{proof}
By virtue of the uniform knots, the B-spline functions are cardinal and have the translation property
\begin{eqnarray}
  B^p_i(t_j)\equiv B^p_{i+k}(t_{j+k})
\end{eqnarray}
for any $i$, $j$, and $k$ as long as $B^p_{i+k}$ and $t_{j+k}$ can be defined. This property is similar to Lemma~\ref{theo:translation}. We will use it below.

We prove the statement by induction on $p$. First, $p=1$ holds because $C_1=B_0^1(t_1)\cdot 0=0$. Next, we assume \autoref{eq:C_p} holds. Then, we compute $C_{p+1}$ as 
\begin{eqnarray}
  C_{p+1}
  &=&\sum_{i=0}^{p}B_i^{p+1}(t_{p+1})i=\sum_{i=1}^{p}B_i^{p+1}(t_{p+1})i \mybecause{The first term is negligible}\nonumber\\
  &=&\sum_{i=1}^{p}\left( \frac{t_{p+1}-t_i}{t_{i+p+1}-t_i}B_i^p(t_{p+1})+\frac{t_{i+p+2}-t_{p+1}}{t_{i+p+2}-t_{i+1}}B_{i+1}^p(t_{p+1})\right)i \mybecause{Eq.~\autoref{eq:CoxdeBoor}}\nonumber\\
  &=&\sum_{i=1}^{p}\left( \frac{p+1-i}{p+1}B_i^p(t_{p+1})+\frac{i+1}{p+1}B_{i+1}^p(t_{p+1})\right)i \mybecause{Uniform knots, i.e. $t_i=\frac{i}{p+n}$}\nonumber\\
  &=&\underbrace{\sum_{i=1}^{p}B_i^p(t_{p+1})i}_{\displaystyle T_1}+\frac{1}{p+1}\underbrace{\left[\sum_{i=1}^p \left( B_{i+1}^p(t_{p+1})-B_{i}^p(t_{p+1})\right)i^2 \right]}_{\displaystyle T_2} +\frac{1}{p+1} \underbrace{\sum_{i=1}^p B_{i+1}^p(t_{p+1})i}_{\displaystyle T_3}.
\end{eqnarray}
The first term $T_1$ can be evaluated as
\begin{eqnarray*}
  T_1:=\sum_{i=1}^{p}B_{i-1}^p(t_p)i=\sum_{j=0}^{p-1}B_{j}^p(t_p)(j+1)=\sum_{j=0}^{p-1}B_{j}^p(t_p)j+\sum_{j=0}^{p-1}B_{j}^p(t_p).
\end{eqnarray*}
Here, the first term in the most RHS is $\frac{p-1}{2}$ by assumption, while the second term is 1 because of the partition of unity. Therefore, we have
\begin{eqnarray*}
  T_1=\frac{p+1}{2}.
\end{eqnarray*}

We can evaluate the second term $T_2$ as follows:
\begin{eqnarray*}
  T_2
  &:=&( B_{2}^p(t_{p+1})-B_{1}^p(t_{p+1}) )\cdot 1^2
  +( B_{3}^p(t_{p+1})-B_{2}^p(t_{p+1}) )\cdot 2^2
  +\cdots+( B_{p+1}^p(t_{p+1})-B_{p}^p(t_{p+1}) )\cdot p^2 \nonumber\\
  &=& B_{1}^p(t_{p+1})(0^2-1^2)
      +B_{2}^p(t_{p+1})(1^2-2^2)
      +\cdots+B_{p}^p(t_{p+1})\left((p-1)^2-p^2\right)
      +\underbrace{B_{p+1}^p(t_{p+1})}_{\text{Vanish}}p^2\nonumber\\
  &=&\sum_{i=1}^p B_{i}^p(t_{p+1})\left((i-1)^2-i^2\right)
  =\sum_{i=1}^p B_{i}^p(t_{p+1})-2\underbrace{\sum_{i=1}^p B_{i}^p(t_{p+1})i}_{\displaystyle T_4}\nonumber\\
  &=& 1-2T_4 \quad\text{($\because$ the partition of unity)},
\end{eqnarray*}
where
\begin{eqnarray*}
T_4 
  &:=& \sum_{i=1}^p B_{i-1}^p(t_{p})i
  = \sum_{j=0}^{p-1} B_{j}^p(t_{p})(j+1)
  = \sum_{j=0}^{p-1} B_{j}^p(t_{p})j+\sum_{j=0}^{p-1} B_{j}^p(t_{p}) \nonumber\\
  &=& \frac{p-1}{2}+1 \quad\text{($\because$ the assumption and the partition of unity)}.
\end{eqnarray*}

In regard to the third term $T_3$, we have
\begin{eqnarray*}
  T_3
  &:=&\sum_{i=1}^p B_{i}^p(t_{p})i=\sum_{i=1}^{p-1} B_{i}^p(t_{p})i + B_{p}^p(t_p)p\nonumber\\
  &=&\frac{p-1}{2} + 0 \quad\text{($\because$ the assumption and the support of $B_p^p$ is $[t_p,t_{2p+1}]$)}.
\end{eqnarray*}

Finally, we can obtain
\begin{eqnarray*}
  C_{p+1}=T_1+\frac{1}{p+1}(1-2T_4)+\frac{1}{p+1}T_3=\frac{p+1}{2}+\frac{1}{p+1}\left(1-2\frac{p+1}{2}\right)+\frac{1}{p+1}\frac{p-1}{2}=\frac{p}{2}.
\end{eqnarray*}

\end{proof}

%%%%%%%%%%%%%%%%%%%%%%%%%%%%%%%%%%%%%%%%%%%%%%%%%%%%%%%%%%%%
\section{Proof for the quasi-periodicity}\label{s:proof_Mp}
%%%%%%%%%%%%%%%%%%%%%%%%%%%%%%%%%%%%%%%%%%%%%%%%%%%%%%%%%%%%

When a surface current density $\vt$ is approximated with the vector basis function $\Mp$ as in \autoref{eq:approx_current}, i.e. $\vt(\vx)\approx\sum_h\sum_i\sum_j \vtcoef_{h,i,j} \Mp_{h,i,j}(\vx)$, we will prove that $\vt$ satisfies \autoref{eq:Pdivconf1} for any $\vx\in C_1$. (The proof for \autoref{eq:Pdivconf2} is similar.) To this end, we may show that each basis function $\Mp_{h,i,j}$ satisfies \autoref{eq:Pdivconf1} for $x_1=-L_1/2$ and any $x_2$ and $x_3$, that is,
\begin{eqnarray}
  \Mp_{h,i,j}(L_1/2, x_2, x_3)\cdot\bm{\tau}^- = \mathrm{e}^{\imath\beta_1}\Mp_{h,i,j}(-L_1/2, x_2, x_3)\cdot\bm{\tau}^-,
  \label{eq:Pdivconf1_Mp}
\end{eqnarray}
where $\bm{\tau}(-L_1/2,x_2,x_3)$ is simply denoted as $\bm{\tau}^-$.

We will see that \autoref{eq:Pdivconf1_Mp} is satisfied for all the three cases of $\Mp_{h,i,j}$ in the RHS of \autoref{eq:pMdiv} as follows:
\begin{enumerate}[label=(\roman*)]

\item $\Mp_{h,i,j}$ in the first case of \autoref{eq:pMdiv}, i.e. $h=1$, $0 \leq i < \qb_1$ and $0 \leq j < \mb_2$.

Since $x_1=L_1/2$ corresponds to $t_1=t_{1,n_1}=u_{1,\mb_1}$ from \autoref{eq:u=t} and the first property in Remark~\ref{theo:pBsurfprop}, we can write down $\Mp$ for $x_1=L_1/2$ as follows:
\begin{eqnarray*}
  \Mp_{1,i,j}(L_1/2,x_2,x_3)
  &=&\bm{V}_{1,i,j}(u_{1,\mb_1},t_2)+\mathrm{e}^{\imath\beta_1}\bm{V}_{1,i+\mb_1-\qb_1,j}(u_{1,\mb_1},t_2)\nonumber\\
  &=&\frac{1}{J(u_{1,\mb_1},t_2)}\left(B^{\qb_1}_i(u_{1,\mb_1})+\mathrm{e}^{\imath\beta_1} B^{\qb_1}_{i+\mb_1-\qb_1}(u_{1,\mb_1})\right)B^{\qb_2}_j(t_2)\frac{\partial\vx}{\partial t_1}(u_{1,\mb_1},t_2),
\end{eqnarray*}
where the definition of $\bm{V}_{1,i,j}$ in \autoref{eq:compatibleB1} was used. Since
\begin{eqnarray*}
  B^{\qb_1}_{i+\mb_1-\qb_1}(u_{1,\mb_1})=B^{\qb_1}_i\left(u_{1,\mb_1}-(u_{1,\mb_1}-u_{1,\qb_1})\right)
\end{eqnarray*}
follows from \autoref{eq:pbasis_knot1} and thus Lemma~\ref{theo:translation}, we have
\begin{eqnarray*}
  \Mp_{1,i,j}(L_1/2,x_2,x_3)
  =\frac{1}{J(u_{1,\mb_1},t_2)}\left(B^{\qb_1}_i(u_{1,\mb_1})+\mathrm{e}^{\imath\beta_1} B^{\qb_1}_i(u_{1,\qb_1})\right)B^{\qb_2}_j(t_2)\frac{\partial\vx}{\partial t_1}(u_{1,\mb_1},t_2).
\end{eqnarray*}
Here, the support of $B^{\qb_1}_i(u_{1,\mb_1})$, i.e. $[u_{1,i},u_{1,i+\qb_1+1}]$, for any $i\in[0,\qb_1)$ is included in $[u_{1,0},u_{1,2\qb_1}]$. Hence, $B^{\qb_1}_i(u_{1,\mb_1})$ vanishes by the assumption of $\mb_1\ge 2\qb_1$ in \autoref{eq:alotofknots1}. Therefore, we have
\begin{eqnarray*}
  \Mp_{1,i,j}(L_1/2,x_2,x_3) = \frac{\mathrm{e}^{\imath\beta_1}}{J(u_{1,\mb_1},t_2)} B^{\qb_1}_i (u_{1,\qb_1}) B^{\qb_2}_j(t_2) \frac{\partial\vx}{\partial t_1}(u_{1,\mb_1},t_2).
\end{eqnarray*}
Similarly, we can obtain
\begin{eqnarray*}
  \Mp_{1,i,j}(-L_1/2,x_2,x_3) = \frac{1}{J(u_{1,\qb_1},t_2)} B^{\qb_1}_i (u_{1,\qb_1}) B^{\qb_2}_j(t_2) \frac{\partial\vx}{\partial t_1}(u_{1,\qb_1},t_2).
\end{eqnarray*}
Here, $\frac{\partial\vx}{\partial t_1}(u_{1,\qb_1},t_2)=\frac{\partial\vx}{\partial t_1}(u_{1,\mb_1},t_2)$ and $J(u_{1,\qb_1},t_2)=J(u_{1,\mb_1},t_2)$ hold from both $u_{1,\qb_1}=t_{1,p_1}$ in \autoref{eq:u=t} and \autoref{eq:pBsurfprop2}. Consequently, we have
\begin{eqnarray*}
  \Mp_{1,i,j}(L_1/2,x_2,x_3) = \mathrm{e}^{\imath\beta_1} \Mp_{1,i,j}(-L_1/2,x_2,x_3). %\quad(0 \leq i < \qb_1,\ 0 \leq j < \mb_2).
\end{eqnarray*}
Therefore, by applying $\bm{\tau}^-$ to the above equation, we can prove \autoref{eq:Pdivconf1_Mp}.

\item $\Mp_{h,i,j}$ in the second case of \autoref{eq:pMdiv}, i.e. $h=2$, $0 \leq i < \mb_3$ and $0 \leq j < \qb_4$.

Since the direction of $\Mp_{h,i,j}$ is determined by the factor $\frac{\partial\vx}{\partial t_h}$, the underlying $\Mp_{2,i,j}$ is perpendicular to the normal vector $\bm{\tau}^-$. Therefore, \autoref{eq:Pdivconf1_Mp} holds.

\item $\Mp_{h,i,j}$ in the third case of \autoref{eq:pMdiv}, i.e. $h=1$, $\qb_1\leq i < \mb_1-\qb_1$, $0 \leq j < \mb_2$ or $h=2$, $0 \leq i < \mb_3$ and $\qb_4 \leq j < \mb_4-\qb_4$.

In this case, any B-spline functions $B_i^{\qb_{2h-1}}$ and $B_j^{\qb_{2h}}$ in the definition of $\bm{V}_{h,i,j}$ in \autoref{eq:Nhij} vanish on $x_1=\pm L_1/2$. (\autoref{fig:pBsp} helps to understand this; the B-spline functions in black correspond to the underlying B-spline functions.) Therefore, \autoref{eq:Pdivconf1_Mp} holds.

\end{enumerate}
\qed

\section*{References}
\iftrue % arXiv
%\iffalse

\else
\bibliographystyle{elsarticle/elsarticle-num}
\bibliography{ref}
\fi

\end{document}